\documentclass[a4paper]{amsart}

\usepackage[pdftex]{graphicx}
\usepackage[pdftex]{xcolor}

\usepackage{graphicx}

\usepackage{mathtools,  stmaryrd}
\usepackage{xparse} \DeclarePairedDelimiterX{\Iintv}[1]{\llbracket}{\rrbracket}{\iintvargs{#1}}
\NewDocumentCommand{\iintvargs}{>{\SplitArgument{1}{,}}m}
{\iintvargsaux#1} %
\NewDocumentCommand{\iintvargsaux}{mm} {#1\mkern1.5mu,\mkern1.5mu#2}
\usepackage{hyperref}

\makeatletter
\newtheorem*{rep@theorem}{\rep@title}
\newcommand{\newreptheorem}[2]{%
\newenvironment{rep#1}[1]{%
 \def\rep@title{#2 \ref{##1}}%
 \begin{rep@theorem}}%
 {\end{rep@theorem}}}
\makeatother

\usepackage{verbatim}
\usepackage{amsmath}
\usepackage{amssymb, mathrsfs}
\usepackage{amsbsy}

\usepackage{amscd}
\usepackage{amsthm}

\usepackage[english]{babel}
\usepackage{todonotes}
\usepackage[T1]{fontenc}

\usepackage[normalem]{ulem}

\newcommand{\N}{\mathbb{N}}

\newcommand{\E}{\mathbb{E}}

\renewcommand{\P}{\mathbb{P}}

\newcommand{\kC}{\mathcal{C}}

\newcommand{\kF}{\mathcal{F}}

\newcommand{\kE}{\mathcal{E}}

\newcommand{\rmP}{\mathrm{P}}

\newcommand{\rme}{\mathrm{E}}

\newcommand{\lin}{\left[\kern-0.15em\left[}
\newcommand{\rin} {\right]\kern-0.15em\right]}
\newcommand{\linf}{[\kern-0.15em [}
\newcommand{\rinf} {]\kern-0.15em ]}
\newcommand{\ilin}{\left]\kern-0.15em\left]}
\newcommand{\irin} {\right[\kern-0.15em\right[}

\newcommand{\cvlaw}{\stackrel{{ (d)}}{\longrightarrow}}

\def\al#1{\begin{align*}#1\end{align*}}
\def\aln#1{\begin{align}#1\end{align}}

\usepackage{constants}

\newconstantfamily{c}{symbol=c}

\newconstantfamily{a}{symbol=\alpha}

\newcommand{\secno}[1]{\thesection.\arabic{#1}}
\newconstantfamily{kE}{
symbol=\mathcal{E},
format=\secno,
reset={section}
}

\renewcommand{\tilde}{\widetilde}

\newtheorem{lem}{Lemma}[section]
\newtheorem{remark}[lem]{Remark}
\newtheorem{prop}[lem]{Proposition}
\newtheorem{thm}[lem]{Theorem}

\newtheorem{cor}[lem]{Corollary}

\newcounter{assu}
\setcounter{assu}{0}

\usepackage{color}
\definecolor{lilas}{RGB}{182, 102, 210}

\newcommand{\CC}{\color{blue}}

\renewcommand{\CC}[1]{\textcolor{blue}{#1}}

\numberwithin{equation}{section}

\title[maximum of 2D Gaussian directed polymer in the subcritical regime]
{The maximum of the two dimensional Gaussian directed polymer in the subcritical regime}
\date{\today}

\author{Cl\'ement Cosco} 
\address[Cl\'ement Cosco]
{Université Paris Dauphine, Paris, France}
\email{cosco@ceremade.dauphine.fr}

\author{Shuta Nakajima} 
\address[Shuta Nakajima]
{University of Meiji, Kanagawa, Japan}
\email{njima@meiji.ac.jp}

\author{Ofer Zeitouni} 
\address[Ofer Zeitouni]
{Weizmann Institute, Rehovot, Israel}
\email{ofer.zeitouni@weizmann.ac.il}

\keywords{Directed polymers.}
\subjclass[2010]{Primary 60K37; secondary 60K35; 82A51; 82D30}
\begin{document}

\begin{abstract} We study the maximum $\phi_N^*$ of the partition function
of the
two dimensional (subcritical) Gaussian directed polymer over an  $\sqrt N \times \sqrt N$ box.
We show that $\phi_N^*/\log N$ converges towards a constant $\sigma^*$, which we identify to be the same as for the maximum of a branching random walk with a slowly varying variance profile as studied in \textit{Fang-Zeitouni, J. Stat. Phys. 2012} and (in the context of the generalized random energy model) in 
\textit{Bovier-Kurkova, Ann. Inst. H. Poincare 2004}.
    \end{abstract}
    \maketitle

\section{Introduction}
We consider in this paper the partition function of the
two dimensional (subcritical) Gaussian directed polymer, and analyze its maximum over a suitable box. We begin by introducing the model.
\subsection{Model and background}
Let $(\omega(i,x))_{i\in \N,x\in\mathbb{Z}^2}$ be i.i.d. standard Gaussian random variables, that is  such that 
\[
\mathbb{E}[\omega(i,x)]=0,\,\mathbb{E}[\omega(i,x)^2]=1.
\]
We use $\P$ and $\E$ to denote the probability law (respectively, the expectation) of the collection of random variables $\{\omega(i,x)\}_{i\in \mathbb{N},x\in \mathbb{Z}^2}$.

Let $\{S_n\}_{n\in \mathbb{Z}_+}$ denote a simple random walk on $\mathbb{Z}^2$. We write 
${\rm E}_x$ for expectations with respect to the random walk
$\{S_n\}$ with $S_0=x$. Given $\hat{\beta}>0$, we set 
$$\beta_N:=\frac{\hat{\beta}}{\sqrt{ R_N}},\,R_N:= {\rm E}^{\otimes 2}_0\Big[\sum_{k=1}^N \mathbf{1}_{S_k^1=S_k^2}\Big] \sim  \frac{\log N}{\pi},$$
where here and throughout for a positive integer $q$ and with ${\bf x}=(x_1,\ldots,x_q)\in (\mathbb{Z}^2)^q$, we write ${\rm E}^{\otimes q}_{\bf x}$  for the expectation with respect to $q$ independent simple random walks 
$\{S_n^i\}_{n\in \mathbb{Z}_+, i\in \Iintv{1,q}}$ satisfying $S_0^i=x_i$ (and we used
$\Iintv{1,q} := \{1,2\dots,q\}$).
We define the partition function of the directed polymer:
\begin{equation}
\label{eq-180924a}
Z_N(x):={\rm E}_{x} \left[e^{\sum_{i=1}^N \{\beta_N\omega(i,S_i)-\beta_N^2/2\}}\right],\qquad W_N(x):=\log Z_N(x),
\end{equation}
and set
\begin{equation}
    \label{eq-180924b}
\phi_N(x):= \sqrt{\log{N}} \log Z_N\left([ x\sqrt N ] \right).
\end{equation}
(Throughout the paper, we consider $\hat\beta$ fixed and omit it from the notation.)
For background, motivation and results on the rich theory
surrounding this topic (in various dimensions), we refer the reader to \cite{CStFlour,Z24}.
In particular, we mention the relation with the $2$ dimensional stochastic
heat equation (SHE).

The particular rescaling $\beta_N$ 
was discovered in the context of the SHE by Bertini and Cancrini \cite{Bertini98} and was later generalized by Caravenna, Sun and Zygouras \cite{CaraSuZy-universalityrelev}, in both the SHE and polymer setups, to a wider range of parameters for which a phase transition occurs. See also \cite{CaSuZy18,CaSuZyCrit21,ChDu18,Gu18KPZ2D,NaNa21}. In particular, it follows from these works that for $\hat\beta<1$, the variable $\log Z_N(x)$ converges in distribution to a normal variable of mean
$- {\lambda^2}/ 2$ and variance $\lambda^2$, where
$\lambda^2=\lambda^2(\hat\beta)=-\log (1-{\hat \beta}^2)$.

Our focus in this article is 
in the spatial behavior of $W_N(x)$.
Indeed, one has, see \cite{CaSuZy18}, that
\begin{equation} \label{eq:GFFlimit}
G_N(x):=\phi_N(x) -\E\phi_N(0)
\cvlaw \sqrt{\frac {\hat {\beta}^2} {1-{\hat \beta}^2}} G(x),
\end{equation}
with $G(x)$ a log-correlated Gaussian field on $\mathbb R^2$. 
The convergence in law
of \eqref{eq:GFFlimit} is in the weak sense, i.e.\@
for any smooth, compactly supported function  $\psi$, the random variable
$\int \psi(x) G_N(x) dx$ converges in distribution to a centered
Gaussian random
variable of variance $\hat{\beta}^2 \sigma^2_\psi/(1-\hat{\beta}^2)$, where
\begin{equation}
  \label{eq:lim-cov}
  \sigma^2_\psi :=\tfrac1{\pi} \iint \psi(x)\psi(y)\int_{\|x-y\|^2/2}^\infty
  z^{-1}e^{-z} dz.\end{equation}
  One recognizes $\sigma^2_\psi$ in \eqref{eq:lim-cov}
  as the variance of the integral of
  $\phi$ against the solution of the \textit{Edwards-Wilkinson} equation. For a related result in the KPZ/SHE setup, see \cite{CaSuZy18,Gu18KPZ2D,NaNa21}.

  Logarithmically correlated fields, and
  in particular their extremes and large values,
  have played an important recent
  role in the study of various models of probability theory at the critical
  dimension,
  ranging from their own study \cite{Biskup,BDZ,DRSV,RV},
  random walk and Brownian motion
  \cite{BRZ,DPRZ}, random matrices \cite{CMN,CN,CFLW},
  Liouville quantum
  gravity \cite{DuSh,KRV}, turbulence \cite{GRV}, and more.
  In particular, exponentiating Gaussian
  logarithmically
  correlated fields yields Gaussian multiplicative chaos,
  with the ensuing question of convergence towards them.

 In the context of polymers,
\eqref{eq:GFFlimit} opens the door to the study of such questions.
A natural role is played by the random measure
\[\mu_{N}^{\gamma}(x):=\frac{e^{\gamma G_N(x)}}
{\E e^{\gamma G_N(x)}},\]
and it is natural to ask about its convergence towards a Gaussian Multiplicative Chaos, and
about extremes of $\phi_N(x)$ for $x$ in some compact subset of $\mathbb R^2$, and specifically on $[-1,1]^2$.
The purpose of this paper is to answer the latter question at the level of
leading order convergence. We describe next our results.

\subsection{Main results}
Set {$\phi_N^*:=\sup_{x\in[-1,1]^2}\phi_N(x)$.}
\begin{thm}
\label{theo-main}
For any $\hat{\beta}<1$, we have, in probability, 
\begin{equation}
    \label{eq-180924d}
\lim_{N\to\infty} \frac{\phi_N^*}{\log N}= \sqrt 2\int_{0}^1\sigma(u){\rm d}u=:\bar \sigma^*,
\end{equation}
where
\begin{equation}
    \label{eq-180924c}
\sigma(u) := \sqrt{\frac{\hat\beta^2}{1-\hat\beta^2 u}}.
\end{equation}
\end{thm}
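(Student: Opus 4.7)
The plan is to compare $\phi_N^*$ with the maximum of an inhomogeneous branching random walk (BRW) of slowly varying variance, for which the constant $\bar\sigma^*=\sqrt 2\int_0^1\sigma(u)\,\mathrm{d}u$ is exactly the one identified in the Fang--Zeitouni and Bovier--Kurkova works cited in the abstract. Both the upper and lower bounds will be extracted from a multiscale decomposition of $W_N=\log Z_N$ along dyadic time slices of the underlying simple random walk, combined with a dyadic partition of the spatial box $[-1,1]^2$.

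For the time-scale decomposition, set $N_k=2^k$ for $k=0,\dots,K:=\lceil\log_2 N\rceil$ and $u_k=k/K$. Building on the polynomial chaos expansion of Caravenna--Sun--Zygouras, one has $\mathrm{Var}(W_{N_k}(x))\approx -\log(1-\hat\beta^2 R_{N_k}/R_N)\approx -\log(1-\hat\beta^2 u_k)$, so that
\[
\mathrm{Var}\bigl(W_{N_k}(x)-W_{N_{k-1}}(x)\bigr)\approx\frac{\hat\beta^2}{1-\hat\beta^2 u_k}\cdot\frac{\log N}{K}=\sigma(u_k)^2\cdot\frac{\log N}{K},
\]
and these scale increments are approximately independent across $k$. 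On the spatial side, partition $[-1,1]^2$ dyadically into boxes of side $2^{-j}$, $j=0,\dots,J\asymp\log N/(2\log 2)$. Two points that share a $j$-box but lie in distinct $(j{+}1)$-subboxes are at rescaled distance $\Theta(2^{-j})$, so their associated walks couple at time $\Theta(N\cdot 2^{-2j})=\Theta(N^{1-t})$ with $t=2j\log 2/\log N$. Hence the time-scale increments of $W_N$ at two such points are \emph{shared} when $u_k\ge 1-t$ and approximately \emph{independent} when $u_k<1-t$. This reproduces the covariance structure of an inhomogeneous BRW of depth $J$, branching $4$, with decreasing scale-variance profile $\sigma(1-t)^2$ at depth $t\in[0,1]$.

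Given this reduction, the upper bound $\phi_N^*\le(1+\varepsilon)\bar\sigma^*\log N$ follows from a multiscale union bound over the dyadic spatial tree: Markov's inequality applied to exponential moments of the scale increments shows that the expected number of finest-scale boxes with $\phi_N>\theta\log N$, for any $\theta>\bar\sigma^*$, is negligible once the deviation budget is optimally allocated across scales --- the resulting variational problem is the one solved in Fang--Zeitouni. A chaining estimate upgrades the box-max to the pointwise max. For the lower bound $\phi_N^*\ge(1-\varepsilon)\bar\sigma^*\log N$ we use a truncated second-moment (barrier) argument on ``good cascades'': one constructs, level by level, a chain of nested sub-boxes whose running partial sums of scale contributions stay close to the linear envelope $t\mapsto\sqrt 2\int_0^{t}\sigma(1-s)\,\mathrm{d}s\cdot\log N$. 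Restricting to such barrier-compliant cascades controls the second moment and yields, with high probability, at least one cascade reaching the finest scale with the desired value.

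The principal obstacle is quantitative: we need uniform (in $x$) subgaussian tails and approximate independence for the increments $W_{N_k}(x)-W_{N_{k-1}}(x)$, going substantially beyond the weak convergence \eqref{eq:GFFlimit}. This requires a careful analysis of the chaos expansion of $\log Z_N$, with uniform control of the ``coupling time'' between nearby walks and of the non-Gaussianity of the increments at finite $N_k$. Once these estimates are in place, the BRW reduction is clean and the variational extraction of $\bar\sigma^*$ adapts the Fang--Zeitouni--Bovier--Kurkova analysis to a smoothly varying profile (no concave-hull correction is needed, since $t\mapsto\sigma(1-t)^2$ is monotonically decreasing on $[0,1]$).
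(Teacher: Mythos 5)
Your high-level picture is correct and coincides with the paper's: a multiscale decomposition of $\log Z_N$ along time slices, matched with a dyadic spatial partition, yielding a covariance structure close to an inhomogeneous BRW with the decreasing profile $\sigma(1-t)^2$, whence the Fang--Zeitouni constant $\sqrt 2\int_0^1\sigma$; the upper bound then comes from a barriered union bound and the lower bound from a truncated second moment. But this is precisely the point at which you stop: the paragraph beginning ``The principal obstacle is quantitative'' names, without resolving, exactly the estimates that constitute the substance of the proof. The increments $\log W_{k,N}(x)=\log Z_{t_k}(x)-\log Z_{t_{k-1}}(x)$ are neither Gaussian nor independent, and no amount of appealing to the chaos expansion by itself gives the uniform-in-$x$ subgaussian tails and the inter-scale decoupling you invoke. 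The paper's route is to represent $W_{k,N}(x)=\sum_y\mu_{t_{k-1}}(x,y)\,\theta_{t_{k-1}}Z_{t_k-t_{k-1}}(y)$ and introduce the event $\mathcal E_{\ell,N}$ on which the Gibbs kernel $\mu_{t_i}(x,y)$ is dominated by a near-Gaussian density $\widehat p_{t_i}$ uniformly over a whole box of starting points $x$; proving that $\mathcal E_{M,N}$ occurs with overwhelming probability (Lemma~\ref{lem: decay of kE}) requires the sharp point-to-point moment bounds of Appendix~\ref{app-A}. This event simultaneously delivers the spatial continuity (what you call the ``chaining estimate,'' which is in fact nonstandard here) and, via Jensen and the generalized H\"older trick of Lemmas~\ref{lem: a.s. L business}--\ref{lem: L business bound}, the conditional moment decoupling between scales. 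For the lower bound one must further truncate to $\widetilde W_{k,N}$ (Lemma~\ref{lem: convert W to tilde-W}) to get actual independence of far-apart increments, and supply a matching lower bound on joint moments (Lemma~\ref{lem: lower bound of moments} and Appendix~\ref{appB}), which extends the existing literature nontrivially. None of this is present in your sketch.

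A secondary but real concern is your choice of time slices $N_k=2^k$, giving $K\asymp\log N$ scales each of $\Theta(1)$ variance. The paper instead fixes a large integer $M$, takes $t_k=\lceil N^{k/M}\rceil$, and sends $N\to\infty$ before $M\to\infty$. This ordering is load-bearing: the decoupling estimates produce multiplicative losses of order $N^{O(1/M)}$ per scale (see, e.g., the factor $N^{M^{-2}}$ in Lemma~\ref{lem: a.s. L business} and the $N^{\varepsilon_0}$ in Corollary~\ref{cor: diffusive separate}), which are harmless over $M$ scales once $M$ is large and fixed, but would not be summable over $\log N$ scales. So even granting the hard estimates, your discretization would need to be coarsened to something like the paper's before the union bound and second moment could close.
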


It is worth commenting on the limit in \eqref{eq-180924d}. A naive guess
based on the exponential moments of $\phi_N(x)$ computed in \cite{cosco2021momentsUB} would lead, by standard first moment arguments, to upper bounding the left side of
\eqref{eq-180924d} by the expression
\begin{equation}
    \label{eq-180924e}
\left(\int_{0}^1\sigma(u)^2{\rm d}u\right)^{1/2}.\end{equation}
In fact, the lower bound on the exponential moments of $\phi_N(x)$ contained in \cite{cosco2023momentsLB} shows that the upper bound on exponential moments
is tight. The reason for the discrepancy between \eqref{eq-180924d}
and \eqref{eq-180924e} lies in a certain inhomogeneity in the contributions to
$Z_N(x)$ coming from different scales. We return to this point below, but mention at this stage that this situation is very similar to
what happens for branching  Brownian motion
with time dependent variance, as analyzed in \cite{FangZeitouni12}
and \cite{MaillardZeitouni16}, where the expression in the right hand side of \eqref{eq-180924d} appears;
see also \cite{BovierKurkova04} for a related result. Thus, the rescaled two dimensional polymer model seems to be the first natural model in which such a varying variance profile appears naturally.

\subsection{Notation}
Throughout, we use the following notation.

We define $\Iintv{a,b}$ as $[a,b]\cap \mathbb{Z}$ for any $a<b$. 
The notation $\log_+ x$ is used to denote $\max\{\log x,0\}$.

For integers $s \le t$, the polymer partition function between times $s$ and $t$, starting at position $x$, is given by
\[
    Z_{s,t}(x) := \mathrm{E}_x \Biggl[\exp\Biggl(\sum_{i\in \Iintv{s,t}} \bigl(\beta_N\omega(i,S_i) - \tfrac{\beta_N^2}{2}\bigr)\Biggr)\Biggr].
\]
In particular, setting $s=1$ yields
\[
    Z_{t}(x) := Z_{1,t}(x).
\]

Given $t\in \mathbb{N}$, the endpoint distribution $\mu_t(x,y)$ at time $t$ is defined as
\[
    \mu_t(x,y) 
    := Z_t(x)^{-1} \mathbb{E}_{x}\Biggl[\exp\Biggl(\sum_{i=1}^t \bigl(\beta_N\omega(i,S_i) - \tfrac{\beta_N^2}{2}\bigr)\Biggr)\mathbf{1}_{\{S_t=y\}}\Biggr].
\]
Here, $\mu_t(x,y)$ represents the probability that the polymer, starting at $x$, ends at $y$ at time $t$ under the associated random path measure.

For $M,N\in\mathbb{N}$ and $k\in \Iintv{1,M}$, we define
\[
    t_k := t_{k,M,N} := \lceil N^{k/M}\rceil,
    \qquad
    r_k := r_{k,M,N} := \lceil N^{k/(2M)}\rceil,
\]
and set $r_0:=r_{-1}:=1$. The box centered at $x$ with radius $r$ is denoted by $\Lambda_r(x)$, defined as
\[
    \Lambda_r(x) := [x-r,x+r]^2 \cap \mathbb{Z}^2.
\]
We write $\Lambda_{n}:=\Lambda_{n}(0)$.

Lastly, we define 
\begin{equation} \label{eq:def_lambda_T_Nbis}
    \lambda_{u,v}^2 := \log\left(\frac{1-\hat{\beta}^2 u}{1-\hat{\beta}^2 v}\right).
\end{equation}

\subsection{Outline of the proof}

Let $M$ be a sufficiently large integer. For $k\in \Iintv{1,M}$, we define
\[
W_{k,N}(x):= \frac{Z_{t_k}(x)}{Z_{t_{k-1}}(x)},
\]
so that $\log{Z_N(x)}=\sum_{k=1}^M \log{W_{k,N}}(x)$. 

The proof of the upper bound in Theorem \ref{theo-main} proceeds as follows.
A natural strategy (see \cite{Kistler} for a nice exposition) is to replace the event $\log{Z_N(x)}\geq (1+\epsilon) \bar\sigma^* \sqrt{\log N}$ by 
the intersections (over $\ell\in \Iintv{1,M}$) of the events $\log_+ W_{\ell,N}(x)\geq  \alpha_\ell$, for appropriately chosen $\alpha_\ell$.
Assuming for the moment that this can be done, and further that
the terms $\log{W_{k,N}}(x)$ were independent (which they are not) and
that they 
are exponentially equivalent to their maximum over boxes of side length 
$r_\ell=\lfloor N^{\ell/2M}\rfloor$,
the upper bound would then follow from a first moment computation, assuming
the knowledge of exponential moments of $\sqrt{\log N}\log{W_{k,N}}(x)$. The latter are evaluated in \cite{CN24} (improving the estimates in \cite{cosco2021momentsUB}) and recalled in Appendix
\ref{app-C}. The choice of the parameters $\alpha_\ell$ follows closely 
the approach in \cite{FangZeitouni12} and follows from the analysis
of their variational problem that is presented in Lemma 
\ref{lem: variationalProblems}.

To carry on this strategy, three steps are needed. The first, contained
in Proposition \ref{prop: first goal for upper bound}, is essentially
a book-keeping exercise together with a union bound, and formalizes a notion of barrier, see \eqref{eq-Aell} and \eqref{eq-Bar}. To handle both the decoupling and the continuity issue mentioned above, we represent expectations of the partition function over a certain time interval in terms
of expectation with respect to the polymer Gibbs measures $\mu_t(x,y)$.
We identify an event $\mathcal{E}_{\ell,N}$ under which this Gibbs measure
is well dominated by a Gaussian, see
\eqref{Eq: max W}, and show (by means of a local CLT) that this event has
large enough probability, see Lemma
\ref{lem: decay of kE}; In the proof, a crucial role is played by an estimate of
moments for point-to-point partition functions presented in
Appendix \ref{app-A}. The event $\mathcal{E}_{\ell,N}$ ensures the continuity in terms of the 
starting point, and together with a coarse graining step originating in 
\cite{CN24}, allows for the decoupling alluded to before,
see Lemmas \ref{lem: decay of kE}, \ref{lem: L business bound} and \ref{lem:combLem}. 

Turning to the lower bound, this is based on a second moment computation, using the same barrier decomposition (this time,  counting 
only points $x$ so that $\log_+ W_{\ell,N}(x)\geq  \alpha_\ell$ for all $\ell\leq (1-\varepsilon_1)M$, that is, we disregard contributions to the partition function from the time interval $\Iintv{N^{1-\varepsilon_1},N}$ with some small parameter $\varepsilon_1>0$). 
At a first step, we introduce approximations to $W_{k,N}(x)$, called
$\widetilde{W}_{k,N}(x)$ (see \eqref{eq-tildeW}), which basically performs averaging with respect to 
(a restriction of) the polymer Gibbs measure $\mu_{t_{k-1}}$ of the contribution to the partition function during the interval $\Iintv{t_{k-1},t_k}$. 
We then show in Lemma \ref{lem: convert W to tilde-W} that this replacement is not creating an error, even if a supremum over all starting points is 
taken. The advantage of working with $\widetilde{W}_{k,N}(x)$ is that one has a localization of the contribution of the disorder 
between times $t_{k-1}$ and $t_k$. This localization  then creates, much as in the upper bound, enough decoupling so that the second moment method can apply, see Lemma \ref{lem: key lemma for lower bound}. (The necessary lower bound on the first moment is obtained by a lower bound on joint exponential moments, see Lemma \ref{lem: lower bound of moments}.) Finally, we show that adding the contribution to the partition function during the time interval $\Iintv{t_{k-1},t_k}$ cannot decrease it much, see \eqref{eq-151024}. This last step uses an a-priori estimate on the lower tail of the partition function contained in Appendix \ref{app-D}.

\section{Preliminaries}
\subsection{Variational problems}
\begin{lem}\label{lem: variationalProblems}
 Given $M\in\N$, $a\geq 0$, a non-decreasing function $f:\Iintv{1,M}\to (0,\infty)$, and $t\in \Iintv{1,M}$, consider the following set 
  $$\mathcal{A}_{a,M}(t):=\left\{g:\Iintv{t,M}\to [0,\infty):
  \!\!\! \begin{array}{c}
\sum_{u\in \Iintv{t,M}}g(u)> a+\sum_{u\in \Iintv{t,M}}f(u),\\
\forall s>t,\sum_{u\in \Iintv{s,M}}g(u)\leq a+ \sum_{u\in \Iintv{s,M}}f(u)
 \end{array}
 \!\right\}.$$
  Then,
  \[\inf_{g\in\mathcal A_{a,M}(t)} \sum_{s\in \Iintv{t,M}} \frac{g(s)}{f(s)} \geq M-t+1 + \frac{a}{f(M)}.\]
\end{lem}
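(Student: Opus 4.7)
The plan is to reduce the problem to a one-line calculation via a change of variables and an Abel summation. Set $h(s) := g(s) - f(s)$ and let $H(s) := \sum_{u=s}^M h(u)$, $G(s) := \sum_{u=s}^M g(u)$, $F(s) := \sum_{u=s}^M f(u)$, so that $H(s) = G(s) - F(s)$. Under this substitution, the two defining conditions of $\mathcal{A}_{a,M}(t)$ become the cleaner constraints
\[
H(t) > a, \qquad H(s) \leq a \text{ for all } s \in \Iintv{t+1,M}.
\]
The target sum splits as
\[
\sum_{s=t}^M \frac{g(s)}{f(s)} = (M-t+1) + \sum_{s=t}^M \frac{h(s)}{f(s)},
\]
so it suffices to prove $\sum_{s=t}^M h(s)/f(s) \geq a/f(M)$.

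For this, I will perform summation by parts, using $h(s) = H(s) - H(s+1)$ with the convention $H(M+1)=0$. Reindexing yields
\[
\sum_{s=t}^M \frac{h(s)}{f(s)} \;=\; \frac{H(t)}{f(t)} \;+\; \sum_{s=t+1}^M H(s)\left(\frac{1}{f(s)}-\frac{1}{f(s-1)}\right).
\]
Now the monotonicity of $f$ plays its only role: since $f$ is non-decreasing, the weights $\tfrac{1}{f(s)}-\tfrac{1}{f(s-1)}$ are non-positive, so the upper bound $H(s)\leq a$ on $\Iintv{t+1,M}$ turns into a lower bound on each summand, namely $H(s)\bigl(\tfrac{1}{f(s)}-\tfrac{1}{f(s-1)}\bigr) \geq a\bigl(\tfrac{1}{f(s)}-\tfrac{1}{f(s-1)}\bigr)$. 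Telescoping gives
\[
\sum_{s=t+1}^M H(s)\left(\frac{1}{f(s)}-\frac{1}{f(s-1)}\right) \;\geq\; \frac{a}{f(M)}-\frac{a}{f(t)}.
\]

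Combining, and then using $H(t) > a$, we reach
\[
\sum_{s=t}^M \frac{h(s)}{f(s)} \;\geq\; \frac{H(t)-a}{f(t)} + \frac{a}{f(M)} \;>\; \frac{a}{f(M)},
\]
from which the claimed inequality follows after taking the infimum (the strict inequality relaxes to $\geq$ in the limit). The degenerate case $t=M$ is immediate since then the sum over $\Iintv{t+1,M}$ is empty and the single inequality $g(M)>a+f(M)$ gives the conclusion directly. The only thing requiring care is the bookkeeping in the summation by parts; there is no substantive obstacle beyond that, as the inequality is essentially a consequence of the fact that $f$ being non-decreasing makes the ``worst case'' for the objective concentrate the excess mass $a$ at the last index $s=M$.
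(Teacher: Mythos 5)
Your proof is correct and follows essentially the same route as the paper's: both rely on Abel summation applied to the partial tails $G(s)=\sum_{u=s}^M g(u)$ (your $H(s)=G(s)-F(s)$) together with the observation that, since $f$ is non-decreasing, the weights $1/f(s)-1/f(s-1)$ are non-positive so the constraints on the tail sums can be inserted termwise. Your change of variables $h=g-f$ is a small cosmetic simplification that separates the $M-t+1$ term up front, but the underlying computation and the use of the hypotheses are the same as in the paper.
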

\begin{proof}
 Assume $g\in \mathcal{A}_{a,M}(t)$. We write 
$$F(u):=\sum_{s\in\Iintv{u,M}}  f(s) \;\; \text{and}\;\; G(u):=\sum_{s\in\Iintv{u,M}}  g(s).$$ 
 We can estimate using summation by parts as follows:
\aln{
\sum_{u\in \Iintv{t,M}} \frac{g(u)}{f(u)}&=\sum_{u\in \Iintv{t,M}}  \frac{(G(u)-G(u+1))}{f(u)}\notag\\
&=\frac{G(t)}{f(t)}+ \sum_{u\in\Iintv{t+1,M}}  G(u) (f(u)^{-1}-f(u-1)^{-1}).\label{Eq: IntegrationbyParts}
} If $f$ is non-decreasing and $g\in \mathcal{A}_{a,M}(t)$, then $F(t)+a\leq G(t)$ and $F(s)+a\geq G(s)$ for $s>t$, which implies that the result is non-negative. Therefore, we can further bound this from below by 
\al{
&\frac{F(t)+a}{f(t)}+ \sum_{u\in\Iintv{t+1,M}}  (F(u)+a) (f(u)^{-1}-f(u-1)^{-1})\\
&= \sum_{u\in \Iintv{t,M}} \frac{f(u)}{f(u)} + \frac{a}{f(M)}= M-t+1+\frac{a}{f(M)},
}
where we have used \eqref{Eq: IntegrationbyParts} with $f$ in place of $g$. Therefore, we conclude that
\[\inf_{g\in\mathcal A_M(t)} \sum_{s\in \Iintv{t,M}} \frac{g(s)}{f(s)}\geq M-t+1+\frac{a}{f(M)}.\]
\end{proof}
\subsection{Technical lemmas}
We note  that 
$$W_{k,N}(x)=\sum_{y\in \mathbb{Z}^2} \mu_{t_{k-1}}(x,y)\theta_{t_{k-1}} Z_{t_{k}-t_{k-1}}(y),$$
where $\theta_t$ is the shift in time $t$, that is,  $\theta_t \omega(s,x):=\omega(t+s,x)$ and $\theta_t X(\omega) := X(\theta_t \omega)$ for a random variable $X$, and $\mu_t(x,y)$ denotes the Polymer Gibbs measure at time $t$, that is, 
$$\mu_t(x,y) : = \frac{{\rm E}_x \left[e^{\sum_{i\in \Iintv{s,t}} \{\beta_N\omega(i,S_i)-\beta_N^2/2\}}\mathbf{1}_{\{S_t=y\}}\right]}{Z_t(x)}.$$
Let us further define
\[
    {\widetilde{Z}_{s,t}(x):={\rm E}_x \left[e^{\sum_{i\in \Iintv{s,t}} \{\beta_N\omega(i,S_i)-\beta_N^2/2\}}\mathbf{1}_{\{\forall i\in \Iintv{s,t},\,|S_i|\leq  \sqrt{t}\log N\}}\right],
    }
\]
and 
 {$\widetilde{Z}_{t}(x):=\widetilde{Z}_{1,t}(x)$}. 
 For $k\in \Iintv{1,M}$, we define
\begin{equation}
    \label{eq-tildeW}
\widetilde{W}_{k,N}(x):=\sum_{\|y-x\|\leq r_{k-1}\log N} \mu_{t_{k-1}}(x,y)\,  \theta_{t_{k-1}} \widetilde{Z}_{t_{k}-t_{k-1}}(y).
\end{equation}
\begin{lem}\label{lem: convert W to tilde-W}
    There exists $C=C(M,\hat{\beta})>0$ such that for any $k\in \Iintv{1,M}$ and $x\in \mathbb{Z}^2$,
    \aln{
    &\mathbb{E}[W_{k,N}(x)-\widetilde{W}_{k,N}(x)]\leq Ce^{-(\log N)^2/C},\\
    &\mathbb{P}(W_{k,N}(x)/\widetilde{W}_{k,N}(x)\geq 2)\leq C e^{-(\log N)^2/C}.
    }
    In particular, since $\log{Z_N(x)} = \sum_{k=1}^M \log W_{k,N}(x)$, we have
\aln{\label{lem:convert conclusion}
\mathbb{P}\left(\!\max_{x\in \Lambda_{\sqrt N}} \left|\log{Z_N(x)} - \sum_{k=1}^M \log \widetilde{W}_{k,N}(x) \right| \geq  M \log{2}\!\right)\!\leq C M (2\sqrt{N}+1)^2 e^{-(\log N)^2/C}. 
}
\end{lem}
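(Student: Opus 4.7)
The plan is to reduce both statements to: (i) sub-Gaussian tail estimates for the simple random walk; (ii) the martingale identity $\mathbb{E}[\theta_{t_{k-1}}Z_{t_k-t_{k-1}}(y)] = 1$ together with the independence of the disorder in $\Iintv{1,t_{k-1}}$ and $\Iintv{t_{k-1}+1,t_k}$; and (iii) the a-priori lower-tail bound on the partition function from Appendix~\ref{app-D}. The nonnegative difference $W_{k,N}(x) - \widetilde{W}_{k,N}(x)$ splits naturally into a ``tail'' part, from endpoints with $\|y-x\|>r_{k-1}\log N$, and a ``walk-exit'' part, from walks that start near $x$ but leave the constraint box before time $t_k-t_{k-1}$.

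For the first-moment bound, conditioning on $\mathcal{F}_{t_{k-1}}:=\sigma(\omega(i,\cdot):i\leq t_{k-1})$ and using that $\mu_{t_{k-1}}(x,y)$ is $\mathcal{F}_{t_{k-1}}$-measurable while $\theta_{t_{k-1}}Z_{t_k-t_{k-1}}(y)$ is independent of $\mathcal{F}_{t_{k-1}}$, one obtains
\[
\mathbb{E}[W_{k,N}(x)-\widetilde{W}_{k,N}(x)] = \mathbb{E}\Bigl[\sum_{\|y-x\|>r_{k-1}\log N}\mu_{t_{k-1}}(x,y)\Bigr] + \mathbb{E}\Bigl[\sum_{\|y-x\|\leq r_{k-1}\log N}\mu_{t_{k-1}}(x,y)\,q(y)\Bigr],
\]
where $q(y)$ is the walk-exit probability, bounded by Azuma--Hoeffding by $e^{-(\log N)^2/C}$. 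For the tail part, the pointwise identity $Z_{t_{k-1}}(x)\mu_{t_{k-1}}(x,y) = \mathrm{E}_x[e^{H_{t_{k-1}}}\mathbf{1}_{S_{t_{k-1}}=y}]$ yields
\[
\mathbb{E}\Bigl[Z_{t_{k-1}}(x)\sum_{\|y-x\|>r_{k-1}\log N}\mu_{t_{k-1}}(x,y)\Bigr] = \mathrm{P}_x(|S_{t_{k-1}}-x|>r_{k-1}\log N)\leq e^{-(\log N)^2/C},
\]
using $r_{k-1}\asymp\sqrt{t_{k-1}}$ and sub-Gaussian random walk tails. To remove the $Z_{t_{k-1}}(x)$ factor, split on $\{Z_{t_{k-1}}(x)\geq \delta\}$ with $\delta:=e^{-(\log N)^2/(2C)}$ and bound the complement probability by Appendix~\ref{app-D}.

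For the ratio bound, since $\widetilde{W}_{k,N}(x)\leq W_{k,N}(x)$ is immediate from the definitions,
\[
\{W_{k,N}(x)/\widetilde{W}_{k,N}(x)\geq 2\} \subset \{W_{k,N}(x)-\widetilde{W}_{k,N}(x)\geq \delta/2\}\cup \{W_{k,N}(x)<\delta\}
\]
for an appropriate $\delta=e^{-(\log N)^2/C'}$. Markov's inequality applied to the first event (via the first-moment bound) and the lower-tail bound from Appendix~\ref{app-D} applied to $W_{k,N}(x)$ give the required estimate. The conclusion \eqref{lem:convert conclusion} then follows by a union bound over $(x,k)\in\Lambda_{\sqrt N}\times\Iintv{1,M}$: on the complementary event, $|\log W_{k,N}(x)-\log\widetilde{W}_{k,N}(x)|<\log 2$ for every $k$, and the telescoping identity $\log Z_N(x)=\sum_k \log W_{k,N}(x)$ yields the claim.

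The main obstacle is the uniform lower-tail control of $Z_{t_{k-1}}(x)$ and $W_{k,N}(x)$ needed to pass from a $Z$-weighted expectation to an unweighted one; this is deferred to Appendix~\ref{app-D}. Everything else --- Fubini, independence across the two time intervals, and Gaussian random walk tails --- is routine.
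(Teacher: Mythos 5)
Your proof is correct and follows essentially the same decomposition as the paper for the first moment bound: split $W_{k,N}(x)-\widetilde{W}_{k,N}(x)$ into the ``far endpoints'' contribution $\|y-x\|>r_{k-1}\log N$ and the ``walk-exit'' contribution, use $\mathbb{E}[\theta_{t_{k-1}}Z_{t_k-t_{k-1}}(y)]=1$ and independence across time intervals, control the far-endpoint sum by multiplying and dividing by $Z_{t_{k-1}}(x)$ (which turns $\mathbb{E}[Z\mu]$ into a pure random-walk probability), and dispatch the small-$Z$ event via Appendix~\ref{app-D}. The paper uses moderate deviations from \cite[Theorem~2.3.11]{LL10} where you invoke Azuma--Hoeffding, and uses the threshold $N^{-1}$ where you use $e^{-(\log N)^2/(2C)}$; both choices work.

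Where you genuinely depart from the paper---and in fact simplify it---is the ratio bound. The paper writes $\{W/\widetilde{W}\geq 2\}\subset\{\widetilde{W}\leq 1/N\}\cup\{W-\widetilde{W}\geq 1/N\}$, and controlling $\mathbb{P}(\widetilde{W}_{k,N}\leq 1/N)$ requires an extra layer: the paper intersects with $\{Z_{t_{k-1}}\geq 2N^{-1}\}$, derives a pointwise upper bound on $W-\widetilde{W}$ under that event, bounds its expectation via the local CLT, and only then combines with the lower-tail Corollary~\ref{eq: lower tail2} applied to both $W_{k,N}$ and $Z_{t_{k-1}}$. Your inclusion $\{W/\widetilde{W}\geq 2\}\subset\{W-\widetilde{W}\geq\delta/2\}\cup\{W<\delta\}$, valid because $W\geq\widetilde{W}$ pointwise, short-circuits this: it needs the lower tail only for $W_{k,N}=Z_{t_k}/Z_{t_{k-1}}$, which Corollary~\ref{eq: lower tail2} gives directly, plus Markov's inequality applied to the already-established first-moment bound. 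This is a cleaner route to the same constants (up to relabelling $C$), and it avoids having to argue about the lower tail of the truncated object $\widetilde{W}_{k,N}$ at all. The final union bound over $(x,k)\in\Lambda_{\sqrt N}\times\Iintv{1,M}$ and the telescoping are exactly as in the paper.
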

\begin{proof}
Without loss of generality, we set $x=0$ as the distributions of $W$ and $\widetilde{W}$ are invariant under shifts. We write $Z_N,W_{k,N},\widetilde{W}_{k,N}$ for $Z_N(0),W_{k,N}(0),\widetilde{W}_{k,N}(0)$.    Note that $\mu_k(x,y) = p_k(x,y)  Z_k(x,y)/Z_k(x) $ with
    \begin{equation*}
        Z_k(x,y) := \rme_x\left[e^{ \sum_{i=1}^k \{\beta_N\omega(i,S_i)-\beta_N^2/2\}}\middle|S_k=y\right].
    \end{equation*}
    Hence, since $(\mu_{t_{k-1}}(0,y))_{y\in \mathbb{Z}^2}$ and $(\theta_{t_{k-1}} Z_{t_{k}-t_{k-1}}(y),\theta_{t_{k-1}} \tilde{Z}_{t_{k}-t_{k-1}}(y))_{y\in \mathbb{Z}^2}$ are independent, we estimate
    \al{
    &\mathbb{E}[W_{k,N}-\widetilde{W}_{k,N}]\\
    &\leq \sum_{|y|> r_{k-1}\log N} \mathbb{E}[\mu_{t_{k-1}}(0,y)] + \sum_{y\in \mathbb{Z}^2} \mathbb{E}[\mu_{t_{k-1}}(0,y)] \mathbb{E}[Z_{t_{k}-t_{k-1}}(y)-\widetilde{Z}_{t_{k}-t_{k-1}}(y))]\\
    &\leq  \sum_{|y|> r_{k-1}\log N} \mathbb{E}[\mu_{t_{k-1}}(0,y)] + {\rm P}_0\left(  \exists i\in \Iintv{1,t_k-t_{k-1}},\,S_i \notin \Lambda_{\sqrt{t_k-t_{k-1}}\log N}\right).
    }
 By the local central limit theorem of the simple random walk \cite[Theorem~2.3.11]{LL10}, the second term on the right-hand side is bounded by $C e^{-(\log N)^2/C}$ with some $C>0$; by Corollary~\eqref{eq: lower tail2}, the first term on the right-hand side is bounded by
    \al{
     &\sum_{\|y\|\geq r_{k-1}\log N} \mathbb{E}[\mu_{t_{k-1}}(0,y)\mathbf{1}_{\{Z_{t_{k-1}}> N^{-1}\}}]  +      \sum_{\|y\|\geq r_{k-1}\log N} \mathbb{E}[\mu_{t_{k-1}}(0,y)\mathbf{1}_{\{Z_{t_{k-1}}\leq  N^{-1}\}}] \\
       &{\leq N \sum_{\|y\|\geq r_{k-1}\log N} \mathbb{E}[p_{t_{k-1}}(0,y)  Z_{t_{k-1}}(0,y)]  +      \sum_{\|y\|\geq r_{k-1}\log N} \mathbb{E}[\mathbf{1}_{\{Z_{t_{k-1}}\leq  N^{-1}\}}]}\\
     &=  N \sum_{\|y\|\geq r_{k-1}\log N} p_{t_{k-1}}(0,y)  +      \sum_{\|y\|\geq r_{k-1}\log N} \mathbb{P}(Z_{t_{k-1}}\leq N^{-1})\leq C e^{-(\log N)^2/C},
    }
which yields the first claim.   Moreover, we see that if $Z_{t_{k-1}}\geq 2 N^{-1}$, then
    \al{
      W_{k,N}-  \widetilde{W}_{k,N}&\leq N \sum_{\|y\|\geq r_{k-1}\log N} p_{t_{k-1}}(0,y) Z_{t_{k-1}}(0,y)\,\theta_{t_{k-1}} Z_{t_{k}-t_{k-1}}(y)\\
      & \;+ \!N\sum_{y\in \mathbb{Z}^2} p_{t_{k-1}}(0,y) Z_{t_{k-1}}(0,y) (\theta_{t_{k-1}} Z_{t_{k}-t_{k-1}}(y)-\theta_{t_{k-1}}\widetilde{Z}_{t_{k}-t_{k-1}}(y)) .
    }
Hence, we have 
   \al{  
    &\mathbb{P}(W_{k,N}\geq 2 N^{-1},\,\widetilde{W}_{k,N}\leq 1/N,\,Z_{t_{k-1}}\geq 2 N^{-1})\\
    &\leq \mathbb{P}(W_{k,N}-\widetilde{W}_{k,N}\geq 1/N,\,Z_{t_{k-1}}\geq 2 N^{-1})\\
    &\leq N \mathbb{E}[(W_{k,N}-\widetilde{W}_{k,N})\mathbf{1}_{Z_{t_{k-1}}\geq 2 N^{-1}}]\\
    &\leq N^2 \mathbb{E}\left[\sum_{\|y\|\geq r_{k-1}\log N} p_{t_{k-1}}(y) Z_{t_{k-1}}(y)\,\theta_{t_{k-1}} Z_{t_{k}-t_{k-1}}(y)\right]\\
    &\qquad +N^2\E\left[ \sum_{y\in \mathbb{Z}^2} p_{t_{k-1}}(y) Z_{t_{k-1}}(y) (\theta_{t_{k-1}} Z_{t_{k}-t_{k-1}}(y)- \theta_{t_{k-1}} \widetilde{Z}_{t_{k}-t_{k-1}}(y))\right].
    }
By the local central limit theorem for the simple random walk \cite[Theorem~2.3.11]{LL10}, this can be further bounded from above by 
   \al{
    & N^2 \sum_{\|y\|\geq r_{k-1}\log N} p_{t_{k-1}}(y) \\
    &\qquad + N^2  {\rm P}_0\left(  \exists i\in [t_k-t_{k-1}],\,S_i \notin [-\sqrt{t_k-t_{k-1}}\log N,\sqrt{t_k-t_{k-1}}\log N]^2\right)\\
    &\quad \leq C e^{-(\log N)^2/C},
    }
  with some $C>0$, where  we have used  the independence in time of the environment and $\mathbb{E}[Z_{n}]=1$. By Corollary~\eqref{eq: lower tail2}, we have
   \al{
   &  \mathbb{P}(\widetilde{W}_{k,N}\leq 1/N)\\ 
   & \leq      \mathbb{P}(\widetilde{W}_{k,N}\leq 1/N,\,W_{k,N}\geq 2 N^{-1},\,Z_{t_{k-1}}\geq 2 N^{-1}) \\
   &\qquad\qquad\qquad + \mathbb{P}(W_{k,N}< 2 N^{-1})
  +\mathbb{P}(Z_{t_{k-1}}< 2 N^{-1})\\
     & \leq 3 C e^{-(\log N)^2/C}. 
   }
   Therefore, since $\widetilde{W}_{k,N}> 1/N$ and $W_{k,N}-\widetilde{W}_{k,N} <  1/N$ imply $W_{k,N}/\widetilde{W}_{k,N}< 2$, by the first claim, we have
\al{
\mathbb{P}(W_{k,N}/\widetilde{W}_{k,N}\geq 2) & \leq      \mathbb{P}(\widetilde{W}_{k,N}\leq 1/N)+ \mathbb{P}(W_{k,N}-\widetilde{W}_{k,N}\geq 1/N)\\
&\leq   \mathbb{P}(\widetilde{W}_{k,N}\leq 1/N)+ N \mathbb{E}[W_{k,N}-\widetilde{W}_{k,N}] \leq 4 C e^{-(\log N)^2/C}. 
}
\end{proof}

\section{Upper bound}
Our aim of this section is to prove that for any $\varepsilon\in (0,1)$, there exists $M_1>0$ such that for any $M\geq M_1$, 
\aln{\label{Eq: goal for upper bound}
&\lim_{N\to\infty}\mathbb{P}\Big(\exists x\in \Iintv{-\sqrt{N},\sqrt{N}}^2,\,\\
&\qquad \qquad \qquad \sum_{k=1}^M \log W_{k,N}(x) > {}\frac{\sqrt 2(1+\varepsilon)}{{\sqrt M}}\sum_{i=1}^M  \lambda_{\tfrac{k-1}{M},\tfrac{k}{M}} \sqrt{\log N} +\varepsilon \sqrt{\log N} \Big)=0,\notag
}
which provides the upper bound.

We fix $M_0:= ({\sqrt 2}\sup_{t\in[0,1]}\sigma(t)+1)^2$ and we choose $M>M_0$ an integer, which eventually will be taken large as function of $\varepsilon$.
\subsection{Barrier argument}
Given $k\in\Iintv{1,M}$, $x\in \mathbb{Z}^2$, $s\in [0,2]$, and $\mathbf{\alpha}=(\alpha_\ell)_{\ell\geq k}$, we consider the sets
\begin{equation}
    \label{eq-Aell}
\mathcal{A}^{\mathbf{\alpha}}_{k,M}(x):= \{\forall \ell\in \Iintv{k,M},\, \log_+ W_{\ell,N}(x)\geq  \alpha_\ell\},\end{equation}
and
\aln{
\nonumber
{\rm Barrier}^{\varepsilon}_{k,M}&:=\Bigg\{\!(\alpha_i)_{i=k}^{M} \in \Iintv{0,M_0\sqrt{\log N}}^{M-k+1} :\\   \label{eq-Bar}
&\qquad 
\begin{array}{l}
\sum_{i=k}^M \alpha_i > \frac{\sqrt{2}(1+\varepsilon)}{\sqrt{M}}\sum_{i=k}^M  \lambda_{\tfrac{i-1}{M},\tfrac{i}{M}} \sqrt{\log N}  + \varepsilon \sqrt{\log N},\\
\sum_{i=\ell}^M \alpha_i \leq \frac{\sqrt{2}(1+\varepsilon)}{\sqrt{M}}\sum_{i=\ell}^M  \lambda_{\tfrac{i-1}{M},\tfrac{i}{M}}
\sqrt{\log N} + \varepsilon \sqrt{\log N},\,\forall \ell \geq k+1
\end{array}
\Bigg\}.
}
 Given $\ell \in \Iintv{1,M}$,  we write 
\begin{equation}\label{eq:defWbar}
\overline{W}_{\ell,N}:=\max_{x\in [0,{r_{\ell-2}})^2} W_{\ell,N}(x),\quad  \overline{\mu}_{t_{\ell-1}}(z) := \overline{\mu}_{t_{\ell-1},N}(z):=\max_{x\in [0,{r_{\ell-2}})^2} \mu_{t_{\ell-1},N}(x,z).
\end{equation}
\begin{prop}\label{prop: first goal for upper bound}
Assume $\varepsilon \in (0,1)$.   For any $M\in \N$, and for any   $N$ large enough depending on $\varepsilon,M$, then 
    \al{
    &\mathbb{P}\Big(\exists x \in\Lambda_{\sqrt N}, \sum_{k=1}^M \log W_{k,N}(x) > \tfrac{\sqrt 2 (1+\varepsilon)}{\sqrt{M}}\sum_{i=1}^M  \lambda_{\tfrac{i-1}{M},\tfrac{i}{M}}
    \sqrt{\log N}
    +\!2\varepsilon \sqrt{\log N}
    \Big)\\
    &\leq N^{\frac{3}{M}}{\max_{k\in\llbracket 1,M\rrbracket}} \sup_{\alpha\in \mathscr{B}_k}
    \Bigl\{ N^{ (M-k)/M } \mathbb{P}\Big(\forall \ell\geq k,\,\log_+  \overline{W}_{\ell,N}\geq  \alpha_\ell\Big)
   \Bigr \},
    }
    where we set  
    {$\mathscr{B}_k:={\rm Barrier}^{\varepsilon}_{k,M}$}.
\end{prop}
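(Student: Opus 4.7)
\emph{Proof plan.} The proof combines three standard ingredients: replacing $\log$ by $\log_+$ and discretizing the contributions, a first-passage (barrier) argument on the tail sums $S_\ell(x) := \sum_{i=\ell}^M \log_+ W_{i,N}(x)$, and a coarse-graining of $\Lambda_{\sqrt N}$ at the single scale $r_{k-2}$ that works simultaneously for all $\ell \geq k$ because $r_{\ell-2}$ is non-decreasing in $\ell$. Since $\log \leq \log_+$, the event of interest is contained in
\[
\Big\{\exists x \in \Lambda_{\sqrt N}:\sum_{k=1}^M \log_+ W_{k,N}(x) > T\Big\},\quad T := \tfrac{\sqrt 2(1+\varepsilon)}{\sqrt M}\sum_{i=1}^M \lambda_{\tfrac{i-1}{M},\tfrac{i}{M}}\sqrt{\log N} + 2\varepsilon\sqrt{\log N}.
\]
Set $\alpha_i(x) := \lfloor \log_+ W_{i,N}(x)\rfloor$, an integer in $\Iintv{0,\lceil M_0\sqrt{\log N}\rceil}$ once we discard the small bad event $\{\exists x,i:\log_+W_{i,N}(x)>M_0\sqrt{\log N}\}$, which is ruled out by the Gaussian-type tails following from the exponential-moment bounds of Appendix C together with a union bound over the $\sim N$ points of $\Lambda_{\sqrt N}$. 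The rounding costs at most $M$ in the total sum, which is negligible against the $\varepsilon\sqrt{\log N}$ slack for $N$ large.

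Introduce the barrier $B_\ell := \frac{\sqrt 2(1+\varepsilon)}{\sqrt M}\sum_{i=\ell}^M \lambda_{\tfrac{i-1}{M},\tfrac{i}{M}}\sqrt{\log N} + \varepsilon\sqrt{\log N}$, so that our event forces $S_1(x) > T-M > B_1$ for $N$ large. Define $k^*(x) := \max\{k \in \Iintv{1,M}:S_k(x) > B_k\}$; by maximality the vector $(\alpha_i(x))_{i=k^*}^M$ lies in $\mathscr{B}_{k^*}$. A union bound over $k^* \in \Iintv{1,M}$ and over the at most $(M_0\sqrt{\log N}+1)^M$ admissible barrier sequences $(\alpha_i)_{i=k}^M\in\mathscr{B}_k$ reduces the task to bounding, uniformly in $k$ and $(\alpha_i)\in\mathscr{B}_k$,
\[
\mathbb{P}\big(\exists x \in \Lambda_{\sqrt N}: \forall \ell \geq k,\ \log_+ W_{\ell,N}(x) \geq \alpha_\ell\big).
\]
For the coarse-graining, partition $\Lambda_{\sqrt N}$ into $\leq (2\sqrt N/r_{k-2}+1)^2 \leq C N^{(M-k+2)/M}$ translates $B_j = z_j + [0,r_{k-2})^2$. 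Since $r_{\ell-2}\geq r_{k-2}$ for every $\ell \geq k$, each $B_j$ sits inside $z_j + [0,r_{\ell-2})^2$; hence by a pointwise max bound and the translation invariance in law of the family $(W_{\ell,N}(\cdot))_{\ell\geq k}$,
\[
\mathbb{P}\big(\exists x\in B_j:\forall \ell\geq k,\ \log_+W_{\ell,N}(x)\geq \alpha_\ell\big) \leq \mathbb{P}\big(\forall \ell\geq k,\ \log_+\overline{W}_{\ell,N}\geq\alpha_\ell\big).
\]
Summing over $j$ introduces the factor $C N^{(M-k+2)/M}$; combining with the polylogarithmic union bound over $\mathscr{B}_k$ and the factor $M$ from the union over $k^*$ yields a total prefactor $C M (M_0\sqrt{\log N}+1)^M N^{2/M} \leq N^{3/M}$ for $N$ large, which is exactly what the stated inequality allows beyond the $N^{(M-k)/M}$ factor.

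The main obstacle is that the coarse-graining must lose only $N^{2/M}$, so one cannot afford to partition at different scales $r_{\ell-2}$ for different $\ell\geq k$ and then union-bound. The key observation making the argument work is that $r_{\ell-2}$ is non-decreasing in $\ell$: a single partition at the finest relevant scale $r_{k-2}$ simultaneously controls all $\max_{B_j}W_{\ell,N}$ for $\ell\geq k$, and joint stationarity produces the joint law of $(\overline{W}_{\ell,N})_{\ell\geq k}$ rather than just the marginals. The secondary subtlety is absorbing the discretization and the union bound over $\mathscr{B}_k$ within the $N^{1/M}=N^{3/M}/N^{2/M}$ of remaining slack, which works because all these losses are polylogarithmic in $N$.
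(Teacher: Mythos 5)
Your proposal follows essentially the same route as the paper: pass from $\log$ to $\lfloor\log_+\rfloor$ (absorbing the rounding loss of $M$ in the $\varepsilon\sqrt{\log N}$ slack), decompose according to the largest index $k$ at which the tail sum exceeds the barrier, union bound over the $O\big((M_0\sqrt{\log N})^M\big)$ barrier vectors in $\mathscr{B}_k$, and coarse-grain $\Lambda_{\sqrt N}$ at the single scale $r_{k-2}$, using that $r_{\ell-2}\geq r_{k-2}$ for $\ell\geq k$ so that a single fine partition controls all the $\overline{W}_{\ell,N}$ with the joint law preserved by a common spatial shift. The bookkeeping of the $N^{3/M}$ prefactor (geometry constant $\times M\times$ polylog $\times N^{2/M}$) is also the same.

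One step needs repair. Rather than discarding the bad event $\{\exists x,i:\log_+W_{i,N}(x)>M_0\sqrt{\log N}\}$, the paper caps only the crossing coordinate, taking $\alpha_k=\lfloor\min\{\log_+W_{k,N}(x),M_0\sqrt{\log N}\}\rfloor$ and $\alpha_\ell=\lfloor\log_+W_{\ell,N}(x)\rfloor$ for $\ell>k$; the barrier constraint $\sum_{i\geq\ell}\alpha_i\leq B_\ell\leq M_0\sqrt{\log N}$ already forces $\alpha_\ell\leq M_0\sqrt{\log N}$ for $\ell>k$, and the capped $\alpha_k$ still exceeds $B_k$ by the choice of $M_0$, so $(\alpha_\ell)_{\ell\geq k}\in\mathscr{B}_k$ and one retains $\log_+W_{k,N}(x)\geq\alpha_k$, which suffices. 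Your discard route does not close cleanly: the proposition is a pure inequality, so from $\mathbb{P}(E)\leq\mathbb{P}(E\cap\text{good})+\mathbb{P}(\text{bad})$ one would need $\mathbb{P}(\text{bad})$ to be dominated by the right-hand side, which is a probability-based expression for which no lower bound is available at this stage (it is precisely what the downstream analysis estimates). Relatedly, you should define the crossing index via the rounded tail sums $\sum_{i\geq k}\lfloor\log_+W_{i,N}(x)\rfloor$ rather than via $S_k(x)=\sum_{i\geq k}\log_+W_{i,N}(x)$; as written, the rounded sum at $k^*$ may fall short of $B_{k^*}$ by up to $M$ and thus miss $\mathscr{B}_{k^*}$. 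Both fixes are local and leave your overall argument intact.
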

\begin{proof}
We estimate 
\al{
&\mathbb{P}\Big(\exists x\in \Lambda_{\sqrt{N}},
\sum_{i=1}^M \log W_{i,N}(x)\! > \!\tfrac{\sqrt 2 (1+\varepsilon)}{\sqrt{M}}\sum_{i=1}^M  \lambda_{\tfrac{i-1}{M},\tfrac{i}{M}}
\sqrt{\log N} + 2\varepsilon \sqrt{\log N}\Big)\\
&\leq \mathbb{P}(\exists x\in \Lambda_{\sqrt{N}},
\sum_{i=1}^M \lfloor \log_+ W_{i,N}(x)\rfloor > \tfrac{\sqrt 2 (1+\varepsilon)}{\sqrt{M}}\sum_{i=1}^M  \lambda_{\tfrac{i-1}{M},\tfrac{i}{M}}
\sqrt{\log N} + \varepsilon \sqrt{\log N})\\
&\leq \sum_{k\in\Iintv{1,M}} \mathbb{P}\bigg(\exists x\in \Lambda_{\sqrt{N}},\\
& \,\begin{array}{l} 
\sum_{i=k}^M\lfloor \log_+ W_{i,N}(x)\rfloor {>} \tfrac{\sqrt 2 (1+\varepsilon)}{\sqrt{M}}\sum_{i=k}^M  \lambda_{(i-1)/M,i/M} \sqrt{\log N} + \varepsilon \sqrt{\log N},\\
\sum_{i=\ell}^M\lfloor  \log_+ W_{i,N}(x)\rfloor {\leq} \tfrac{\sqrt 2 (1+\varepsilon)}{\sqrt{M}}\sum_{i=\ell}^M 
\lambda_{(i-1)/M,i/M}\sqrt{\log N} + \varepsilon \sqrt{\log N},\forall \ell\geq \! k+1\!\bigg).
\end{array}
}
The last probability can be further bounded from above by 
\al{
& \sum_{\alpha\in \mathscr{B}_k}
\!\mathbb{P}\left(\exists x \in \Lambda_{\sqrt{N}},
\forall \ell> k,\!\lfloor \log_+  W_{\ell,N} (x)\rfloor\!= \!\alpha_\ell,  \lfloor \log_+  W_{k,N} (x)\rfloor \geq  \alpha_k\right)\\
 &\leq \sum_{\alpha\in \mathscr{B}_k}
\! \mathbb{P}\left(\exists x\in \Lambda_{\sqrt{N}},
\forall \ell> k, \log_+  W_{\ell,N} (x)\geq  \alpha_\ell,\log_+  W_{k,N} (x) \geq  \alpha_k\right)\\
&\leq  (M_0\log N)^M \sup_{\alpha\in\mathscr{B}_k}
\mathbb{P}\left(\exists x\in \Lambda_{\sqrt{N}},
\mathcal{A}^{\mathbf{\alpha}}_{k,M}(x)\right),
}
where  we have used the union bound for $$\text{$\{\alpha_\ell=\lfloor\log_+ W_{\ell,N}(x) \rfloor\}$, $\ell \geq k+1$, and $\alpha_k = \lfloor \min\{\log_+ W_{k,N}(x),M_0\sqrt{\log N}\}\rfloor$}$$ 
 in the first line, and we denote $\mathbf{\alpha}=(\alpha_{\ell})_{\ell=k}^M$ in the last line. Moreover, we bound the last probability (recall $r_k = \lceil  N^{k/2M} \rceil$ that is close to $\sqrt{t_k}$) as:
\begin{align}
&\mathbb{P}(\exists x\in \Lambda_{\sqrt{N}},
\mathcal{A}^{\mathbf{\alpha}}_{k,M}(x))\nonumber\\
&\leq \sum_{\mathbf{k} \in [-2\sqrt{N},2\sqrt{N}]^2 \cap {r_{k-2}}\mathbb{Z}^2}\mathbb{P}(\exists x\in (\mathbf{k}+[0,{r_{k-2}})^2)\cap \mathbb{Z}^2,\,\mathcal{A}^{\mathbf{\alpha}}_{k,M}(x))\nonumber\\
&\leq (4\sqrt{N}+1)^2 r_{k-2}^{-2} \mathbb{P}(\exists x\in [0,{r_{k-2}})^2\cap \mathbb{Z}^2,\,\mathcal{A}^{\mathbf{\alpha}}_{k,M}(x))\nonumber\\
&\leq 32 N^{ (M-k+2)/M } \mathbb{P}\Big(\forall \ell\geq k,\,\max_{x\in [0,{r_{\ell-2}})^2} \log_+ W_{\ell,N}(x)\geq  \alpha_\ell\Big). \label{eq:probaForMarkov}
\end{align}
\end{proof}
\subsection{Decoupling}\label{section: decoupling}
Let $\widehat{\delta}>0$ be  a small constant chosen later depending on $\varepsilon$.    We fix $M$ and $L$  be large integers chosen later depending on $\varepsilon,\hat{\delta}$.  We consider a probability density function $\widehat{p}_n(x,y)$ such that
    \begin{equation}
            \widehat{p}_n(y) := \frac{1}{C_{\widehat{p}}(n)} \exp{\left(-\frac{\widehat{\delta} \|y\|^2}{ n}\right)},
        \end{equation}
    where $C_{\widehat{p}}(n)$ is a normalizing constant such that $\sum_{y\in \mathbb{Z}^2} \widehat{p}_n(y)=1$ and $C_{\widehat{p}}(n)=\Omega((\widehat{\delta} n)^{-1})$.  
    Assume $(\alpha_\ell)_{\ell=k}^M$ $\in$ $ {\rm Barrier}^{\varepsilon}_{k,M}$. 
We define 
\aln{\label{Eq: max W}
\mathcal{E}_{\ell,N}:= \{\forall i\in \Iintv{1,\ell-1},\,\forall x\in [0,r_{i-1})^2,\,\forall y\in  \mathbb{Z}^2,\,\mu_{t_{i}}(x,y)\leq N^{1/(L M^2)} \widehat{p}_{t_{i}}(y)\}.
}
\begin{lem}\label{lem: decay of kE}
  There exists $\widehat{\delta}>0$ such that   for any $M,L,A\in\N$,
if $N$ is large enough, then we have 
\begin{align*}
\mathbb{P}(\mathcal{E}_{M,N}^c)  \leq N^{-A}.
\end{align*}
\end{lem}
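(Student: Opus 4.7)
The plan is to decompose $\mu_{t_i}(x,y) = p_{t_i}(x,y) Z_{t_i}(x,y)/Z_{t_i}(x)$ and control each factor separately. Since $\mu_{t_i}(x,y)=0$ for $y$ outside the range of the simple random walk from $x$ at time $t_i$ (a set of cardinality $O(t_i^2)\leq O(N^{2(M-1)/M})$), only a union bound over a polynomial number of triples $(i,x,y)$ with $i\in\Iintv{1,M-1}$, $x\in[0,r_{i-1})^2$, and $\|y-x\|_\infty \leq t_i$ will be needed; the total count is at most $CMN^3$.

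For the denominator, set $\varepsilon_0 := 1/(4LM^2)$ and define the good event $\mathcal{G} := \{Z_{t_i}(x) \geq N^{-\varepsilon_0}\text{ for every } i\in\Iintv{1,M-1},\ x\in [0,r_{i-1})^2\}$. By the lower-tail estimate of Appendix~\ref{app-D} (Corollary~\eqref{eq: lower tail2}) and a union bound, $\mathbb{P}(\mathcal{G}^c) \leq CMNe^{-(\log N)^2/C}$, which is much smaller than $N^{-A}$. On $\mathcal{G}$ one has $\mu_{t_i}(x,y) \leq N^{\varepsilon_0}p_{t_i}(x,y) Z_{t_i}(x,y)$. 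For the kernel, the local CLT Gaussian upper bound $p_{t_i}(x,y) \leq C_0 t_i^{-1} e^{-c_0\|y-x\|^2/t_i}$ (e.g.\ \cite[Theorem~2.3.11]{LL10}), together with $\|x\|^2/t_i \leq 2r_{i-1}^2/t_i \leq 2N^{-1/M}\to 0$ and $\|y-x\|^2 \geq \tfrac12\|y\|^2 - \|x\|^2$, yields, upon fixing $\widehat{\delta} := c_0/4$ (independent of $M,L,A$), the uniform comparison $p_{t_i}(x,y) \leq C_1\widehat{p}_{t_i}(y)$. Hence the task reduces to proving $Z_{t_i}(x,y) \leq N^{1/(LM^2) - \varepsilon_0}/C_1$ with probability at least $1-N^{-A-3}$, uniformly in the relevant triples.

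For this last reduction I would invoke the point-to-point moment estimates of Appendix~\ref{app-A}: for each integer $q\geq 1$ there exists $C_q<\infty$ with $\mathbb{E}[Z_{t_i}(x,y)^q] \leq C_q$ uniformly in $i,x,y$. Choosing $q$ as a sufficiently large multiple of $LM^2(A+4)$ and applying Markov's inequality gives
$$\mathbb{P}\bigl(Z_{t_i}(x,y) > N^{1/(2LM^2)}\bigr) \leq C_q N^{-q/(2LM^2)} \leq N^{-(A+4)}$$
for $N$ large. A union bound over the $O(N^3)$ triples absorbs the polynomial prefactor and, combined with the bound on $\mathbb{P}(\mathcal{G}^c)$, yields the desired $\mathbb{P}(\mathcal{E}_{M,N}^c) \leq N^{-A}$. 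The main obstacle is precisely this last step: we need \emph{uniform} (in $t$ and in the endpoint $y$) high-moment bounds on the point-to-point partition function $Z_t(x,y)$ throughout the subcritical regime, which is exactly the content of Appendix~\ref{app-A}; the remaining ingredients (the lower tail of $Z_t$, the Gaussian LCLT comparison, and the final Markov/union-bound step) are routine once those moment bounds are in hand.
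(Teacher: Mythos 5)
Your high-level strategy (union bound over triples $(i,x,y)$, the decomposition $\mu_{t_i}(x,y)=p_{t_i}(x,y)Z_{t_i}(x,y)/Z_{t_i}(x)$, controlling the denominator via the lower-tail estimate, and controlling the numerator by Markov with high moments from Appendix~\ref{app-A}) is the same as the paper's. However, the crucial moment step is wrong, and the reduction preceding it discards information that turns out to be essential.

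The claim ``for each integer $q\geq 1$ there exists $C_q<\infty$ with $\mathbb{E}[Z_{t_i}(x,y)^q]\leq C_q$ uniformly in $i,x,y$'' is \emph{false}, and it is not the content of Appendix~\ref{app-A}. Theorem~\ref{thm: ptop moment} and Corollary~\ref{cor: LLT moment} bound $\mathbb{E}\bigl[(Z_N(x,y)\,p_N(x,y))^p\bigr]$, not $\mathbb{E}[Z_N(x,y)^p]$. The two coincide (up to constants) only when $\|x-y\|$ is of diffusive order, so that $p_N(x,y)\asymp N^{-1}$. For atypically distant $y$ the point-to-point moments blow up: take $y$ with $\|y-x\|_1=t_i$, so that only the straight path contributes. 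Then $Z_{t_i}(x,y)=\exp\bigl(\beta_N\sum_n\omega(n,x_n)-\beta_N^2 t_i/2\bigr)$ is log-normal with variance $\beta_N^2 t_i\sim \pi\hat\beta^2 t_i/\log N$, and $\mathbb{E}[Z_{t_i}(x,y)^q]=e^{\binom q2\beta_N^2 t_i}$, which is super-polynomially large in $N$ for any fixed $q$. Markov's inequality with such a moment gives nothing, so the final step of your proof cannot be carried out as written.

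The issue originates in the reduction ``$p_{t_i}(x,y)\leq C_1\widehat p_{t_i}(y)$, hence it suffices to show $Z_{t_i}(x,y)\leq N^{1/(LM^2)-\varepsilon_0}/C_1$ w.h.p.'' For diffusive $y$ this is fine, but for super-diffusive $y$ the inequality $p_{t_i}(x,y)\leq C_1\widehat p_{t_i}(y)$ throws away the super-exponential smallness of $p_{t_i}(x,y)$ relative to $\widehat p_{t_i}(y)$, and that extra smallness is exactly what is needed to absorb the large fluctuations of $Z_{t_i}(x,y)$ for those $y$. The paper sidesteps this by applying Markov to the \emph{product} $Z_{t_i}(x,y)\,p_{t_i}(x,y)$ and invoking Corollary~\ref{cor: LLT moment}, which retains a residual factor $p_{t_i}(x,y)^{p\delta/(1+\delta)}$ on the right-hand side; after dividing by $\widehat p_{t_i}(y)^p$, the surviving Gaussian decay $e^{-c\,p\delta\|x-y\|^2/t_i}$ dominates $e^{p\widehat\delta\|y\|^2/t_i}$ once $\widehat\delta$ is chosen small relative to $\delta$, uniformly in $y\in\Lambda_{t_i}(x)$. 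To repair your argument you would need to keep the kernel inside the Markov step, exactly as Corollary~\ref{cor: LLT moment} allows, rather than peeling it off deterministically first.
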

\begin{proof}
We note that, by the union bound,
    \al{
    &\mathbb{P}(\mathcal{E}_{M,N}^c)\leq \sum_{i=1}^M \sum_{x\in [0,r_{i-1})^2\cap \mathbb{Z}^2}\sum_{y\in \mathbb{Z}^2}\mathbb{P}(\mu_{t_{i}}(x,y)> N^{1/(LM^2)} \widehat{p}_{t_{i}}(y)).
    }
Note that for {$y\notin \Lambda_{t_i}(x)$}, the last probability is zero. 
We suppose {$y\in \Lambda_{t_i}(x)$}. By the lower tail concentration \cite[Proposition 3.1]{CaSuZy18}, we have
\al{
&\mathbb{P}(\mu_{t_{i}}(x,y)> N^{1/(LM^2)} \widehat{p}_{t_{i}}(y))\\&\leq \mathbb{P}(Z_{t_{i}}(x)\leq N^{-1/(2LM^2)})+\mathbb{P}(Z_{t_{i}}(x,y) p_{t_{i}}(x,y)> N^{1/(2LM^2)} \widehat{p}_{t_{i}}(y))\\
&\leq C e^{-(\log N)^2/C} +\mathbb{P}(Z_{t_{i}}(x,y) p_{t_{i}}(x,y)> N^{1/(2LM^2)} \widehat{p}_{t_{i}}(y)),
}
 with some $C =C(M,L,\hat{\beta})>0$. For the second term on the right-hand side, by the Markov inequality, for any $p\in \N$, we estimate
 \al{
&\mathbb{P}(Z_{t_{i}}(x,y) p_{t_{i}}(x,y)> N^{1/(2LM^2)} \widehat{p}_{t_{i}}(y))\\
&\leq  N^{-p/(2LM^2)}\widehat{p}_{t_{i}}(y)^{-p} \mathbb{E}[(Z_{t_{i}}(x,y) p_{t_{i}}(x,y))^p].
}
By Corollary~\ref{cor: LLT moment} and the local central limit theorem for the simple random walk  \cite[Theorem~2.3.11]{LL10}, with some $\delta\in (0,1)$, this is further bounded from above by 
\al{
&C' N^{-p/(2LM^2)} t_i^p\,  e^{p \hat{\delta}\|y\|^2/t_i^2}\, t_i^{-p/(1+\delta)}\, p_{t_{i}}(x,y)^{p\delta/(1+\delta)}\\
&\leq C'' N^{-p/(2LM^2)} e^{p \hat{\delta}\|y\|^2/t_i} e^{-p\delta\|x-y\|^2/(4t_i)} ,
} with some $C',C''>0$ depending on $\widehat{\delta},p$. 
For $\widehat{\delta}=\delta/8$, 
  if we take $p= \lceil 4A LM^2\rceil$, for $N$ large enough,  
   this is further bounded from above by $N^{-A}$ uniformly in  $x\in [0,r_{i-1})^2\cap \mathbb{Z}^2$ and $y\in \Lambda_{t_i}(x)$. 
    \end{proof}
    The next lemma gives an expression for conditional exponential moments of  $\overline{W}_{\ell,N}$, given $\mathcal{F}_{\ell-1}$. It creates the necessary decoupling between ratios of partition functions.
\begin{lem}\label{lem: a.s. L business}
For any $L\in\N$, $\alpha_\ell\in \N\cup\{0\}$, and  $q_\ell\in \N$,   it holds, almost surely,
    \al{
    &\mathbb{E}[\mathbf{1}_{\{\log_+{\overline{W}_{\ell,N}}\geq \alpha_\ell\}}\mathbf{1}_{\mathcal{E}_{\ell,N}}|~\mathcal{F}_{\ell-1}\Big]
    \leq e^{-q_\ell \alpha_\ell}  \mathbb{E}\Big[(\overline{W}_{\ell,N})^{q_\ell}\Big|~\kF_{\ell-1}\Big] \mathbf{1}_{\mathcal{E}_{\ell,N}} \\
    & \leq N^{M^{-2}} e^{-q_\ell \alpha_\ell}  \sum_{z_1,\ldots,z_{L}\in \mathbb{Z}^2} \widehat{p}_{t_{\ell-1}}(z_1)\cdots \widehat{p}_{t_{\ell-1}}(z_{L}) \mathcal{M}^L_{t_{\ell},q_\ell}(z_1,\ldots,z_{L}),
    }
where  
\(\mathcal{M}^L_{n,q}(z_1,\ldots,z_L):=\mathbb{E}\Big[Z_{n}(z_1)^{q/L} \cdots Z_{n}(z_L)^{q/L}\Big]
\).

\end{lem}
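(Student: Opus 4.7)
For the first inequality, the plan is to apply the conditional Markov inequality to $\overline{W}_{\ell,N}^{q_\ell}$: since $\alpha_\ell\geq 0$, the event $\{\log_+\overline{W}_{\ell,N}\geq \alpha_\ell\}$ implies $\overline{W}_{\ell,N}^{q_\ell}\geq e^{q_\ell\alpha_\ell}$, so $\mathbf{1}_{\{\log_+\overline{W}_{\ell,N}\geq \alpha_\ell\}}\leq e^{-q_\ell\alpha_\ell}\overline{W}_{\ell,N}^{q_\ell}$, and the indicator $\mathbf{1}_{\mathcal{E}_{\ell,N}}$ can be pulled out of the conditional expectation because $\mathcal{E}_{\ell,N}$, depending only on the Gibbs measures $\mu_{t_i}$ for $i\leq\ell-1$, is $\mathcal{F}_{\ell-1}$-measurable.

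For the second inequality, I will begin with the Markov-type decomposition $W_{\ell,N}(x)=\sum_y\mu_{t_{\ell-1}}(x,y)\,\theta_{t_{\ell-1}}Z_{t_\ell-t_{\ell-1}}(y)$ recalled just before the lemma, and push the maximum over $x\in[0,r_{\ell-2})^2$ inside the sum (permissible since the shifted partition function is non-negative). This gives $\overline{W}_{\ell,N}\leq \sum_z\overline{\mu}_{t_{\ell-1}}(z)\,\theta_{t_{\ell-1}}Z_{t_\ell-t_{\ell-1}}(z)$. On $\mathcal{E}_{\ell,N}$, the defining inequality at $i=\ell-1$ from \eqref{Eq: max W} replaces $\overline{\mu}_{t_{\ell-1}}(z)$ by $N^{1/(LM^2)}\widehat{p}_{t_{\ell-1}}(z)$. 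Abbreviating $Y(z):=\theta_{t_{\ell-1}}Z_{t_\ell-t_{\ell-1}}(z)$, one obtains
\[
    \overline{W}_{\ell,N}^{q_\ell}\mathbf{1}_{\mathcal{E}_{\ell,N}} \;\leq\; N^{q_\ell/(LM^2)}\,\mathbf{1}_{\mathcal{E}_{\ell,N}}\Bigl(\sum_z\widehat{p}_{t_{\ell-1}}(z)Y(z)\Bigr)^{q_\ell}.
\]

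The crucial analytic step will be Jensen's inequality applied with respect to the probability measure $\widehat{p}_{t_{\ell-1}}$ and the convex function $t\mapsto t^{q_\ell/L}$ (valid provided $q_\ell\geq L$):
\[
    \Bigl(\sum_z\widehat{p}_{t_{\ell-1}}(z)Y(z)\Bigr)^{q_\ell} \;\leq\; \Bigl(\sum_z\widehat{p}_{t_{\ell-1}}(z)Y(z)^{q_\ell/L}\Bigr)^{L} \;=\; \sum_{z_1,\ldots,z_L}\prod_{i=1}^{L}\widehat{p}_{t_{\ell-1}}(z_i)\prod_{i=1}^{L}Y(z_i)^{q_\ell/L}.
\]
Since $Y$ is a functional of the post-$t_{\ell-1}$ disorder, hence independent of $\mathcal{F}_{\ell-1}$, taking the conditional expectation and using time-translation invariance identifies $\mathbb{E}\bigl[\prod_i Y(z_i)^{q_\ell/L}\bigr]$ with $\mathcal{M}^L_{t_\ell-t_{\ell-1},q_\ell}(z_1,\ldots,z_L)$, which is dominated by $\mathcal{M}^L_{t_\ell,q_\ell}(z_1,\ldots,z_L)$ via an elementary monotonicity of the multi-point moment in the time horizon ($Z$ being a positive martingale, each $Z_n(z_i)^{q_\ell/L}$ is a submartingale when $q_\ell/L\geq 1$).

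The main bookkeeping issue, and the only delicate point, is that the above produces a prefactor $N^{q_\ell/(LM^2)}$, which must be absorbed into the stated $N^{1/M^2}$; together with the Jensen constraint $q_\ell\geq L$, this pins the useful regime to $q_\ell\asymp L$, which is precisely the balanced choice made in the subsequent first-moment computation driving the upper bound in Theorem \ref{theo-main}. Everything else is a careful tracking of conditional expectations and indicators.
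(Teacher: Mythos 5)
Your first inequality is essentially correct (a tiny wrinkle: when $\alpha_\ell=0$ the pathwise bound $\mathbf{1}_{\{\log_+\overline{W}_{\ell,N}\geq 0\}}\leq \overline{W}_{\ell,N}^{q_\ell}$ fails pointwise since $\overline{W}_{\ell,N}$ may be below $1$; it is still true after conditioning because $\mathbb{E}[\overline{W}_{\ell,N}^{q_\ell}\,|\,\mathcal{F}_{\ell-1}]\geq\mathbb{E}[W_{\ell,N}(0)\,|\,\mathcal{F}_{\ell-1}]^{q_\ell}=1$, but this needs saying).

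The second inequality has a genuine gap, and it is precisely the one you flag at the end. Your order of operations --- push the $\max_x$ inside the sum, then replace $\overline{\mu}_{t_{\ell-1}}\leq N^{1/(LM^2)}\widehat{p}_{t_{\ell-1}}$ on $\mathcal{E}_{\ell,N}$, then apply Jensen with the reference measure $\widehat{p}_{t_{\ell-1}}$ --- forces the $\mathcal{E}_{\ell,N}$-substitution to occur inside an expression raised to the $q_\ell$-th power, giving the prefactor $N^{q_\ell/(LM^2)}$. You then argue this is harmless because $q_\ell\asymp L$, but that is not the regime in which the lemma is used: in the upper-bound proof one takes $q_\ell=\widehat q_\ell\asymp\sqrt{\log N}$ (see \eqref{def:q_khat}) while $L$ is a large but $N$-independent integer, so $N^{q_\ell/(LM^2)}=\exp\bigl(c\sqrt{\log N}\,\log N/(LM^2)\bigr)$ is super-polynomial and destroys the bound. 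The Jensen constraint $q_\ell\geq L$ only forces $q_\ell\geq L$, not $q_\ell\asymp L$; so the stated prefactor $N^{1/M^2}$ is simply not reachable along your route.

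The remedy is to reverse the order: apply Jensen \emph{before} maximizing, with respect to the genuine probability measure $\mu_{t_{\ell-1}}(x,\cdot)$ for each fixed $x$. For $q_\ell/L\geq 1$ and $x\in[0,r_{\ell-2})^2$,
\begin{align*}
W_{\ell,N}(x)^{q_\ell/L}
&=\Bigl(\sum_{z}\mu_{t_{\ell-1}}(x,z)\,Y(z)\Bigr)^{q_\ell/L}
\leq\sum_{z}\mu_{t_{\ell-1}}(x,z)\,Y(z)^{q_\ell/L}
\leq\sum_{z}\overline{\mu}_{t_{\ell-1}}(z)\,Y(z)^{q_\ell/L},
\end{align*}
where $Y(z)=\theta_{t_{\ell-1}}Z_{t_\ell-t_{\ell-1}}(z)$. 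Take $\max_x$ on the left and raise to the $L$-th power to get $\overline{W}_{\ell,N}^{q_\ell}\leq\bigl(\sum_z\overline{\mu}_{t_{\ell-1}}(z)Y(z)^{q_\ell/L}\bigr)^{L}$ pathwise. Expanding the $L$-th power now produces only $L$ factors of $\overline{\mu}_{t_{\ell-1}}$, so on $\mathcal{E}_{\ell,N}$ the substitution yields $(N^{1/(LM^2)})^L=N^{1/M^2}$, as claimed. Conditioning on $\mathcal{F}_{\ell-1}$, using independence of $(Y(z))_z$ from $\mathcal{F}_{\ell-1}$, and the monotonicity of the multipoint moment that you correctly identify, completes the proof.
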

\begin{proof}
For any $L\in \N$,  since $\mathcal{E}_{\ell,N}$ is measurable with respect to $\kF_{\ell-1}$, we estimate 
\al{
&\mathbb{E}[ \mathbf{1}\{\log_+{\overline{W}_{\ell,N}}\geq \alpha_{\ell}\}
\mathbf{1}_{\mathcal{E}_{\ell,N}}|~\kF_{\ell-1}]
\leq e^{-q_\ell \alpha_\ell } \mathbb{E}\Big[(\overline{W}_{\ell,N})^{q_\ell}\Big|~\kF_{\ell-1}\Big]\mathbf{1}_{\mathcal{E}_{\ell,N}}\\
&\leq e^{-q_\ell \alpha_\ell } \mathbb{E}\Big[\Big(\sum_{z\in \mathbb{Z}^2}\overline{\mu}_{t_{\ell-1}}(z) Z_{t_{\ell-1},t_\ell}(z)^{q_\ell/L}\Big)^{L}\Big|~\kF_{\ell-1}\Big]\mathbf{1}_{\mathcal{E}_{\ell,N}},
}
where we have used Jensen's inequality in the last line. On the event $\mathcal{E}_{\ell,N}$, this is further equal to
\al{
& e^{-q_\ell \alpha_\ell }\sum_{z_1,\ldots,z_{L}\in \mathbb{Z}^2 }\overline{\mu}_{t_{\ell-1}}(z_1)\cdots \overline{\mu}_{t_{\ell-1}}(z_{L}) \mathbb{E}\Big[Z_{t_\ell-t_{\ell-1}}(z_1)^{q_\ell/L} \cdots Z_{t_\ell-t_{\ell-1}}(z_{L})^{q_\ell/L}\Big] \\
&\leq e^{-q_\ell \alpha_\ell } N^{M^{-2}} \sum_{z_1,\ldots,z_{L}\in \mathbb{Z}^2} \widehat{p}_{t_{\ell-1}}(z_1)\cdots \widehat{p}_{t_{\ell-1}}(z_{L}) \mathbb{E}\Big[Z_{t_\ell}(z_1)^{q_\ell/L} \cdots Z_{t_\ell}(z_{L})^{q_\ell/L}\Big].
}
\end{proof}
\subsection{Diffusive scale separation}
Next, we estimate 
$$\sum \widehat{p}_{t_{\ell-1}}(z_1)\cdots \widehat{p}_{t_{\ell-1}}(z_{L}) \mathcal{M}^{L}_{t_\ell,q_\ell}(z_1,\ldots,z_{L}).$$
Our goal in this section is to prove that we are able to restrict the sum over $(z_i)_{i=1}^L$ satisfying $\|z_i-z_j\|\geq (r_{\ell-1})^{1-\varepsilon}$ for arbitrary small $\varepsilon>0$. We first prove the following:
\begin{lem}\label{lem: diffusive scaling}
    For any $M\in\N$ and $\varepsilon_0,A>0$, {if $L$ is large enough}, then for $N\in\N$  large enough, for any $\ell \in \Iintv{1,M-1}$, we have 
\aln{
\widehat{\rm P}^{\otimes L}_0(\exists I\subset \Iintv{1,L},|I|\geq (1-\varepsilon_0)L,\,\forall a\neq b\in I,\,\|S_{t_{\ell}}^a-S_{t_{\ell}}^b\|\geq (r_{\ell})^{1-\varepsilon_0})\geq 1-N^{-A},
}
where $(S_{t}^a)_{a\in \Iintv{1,L}}$ 
are i.i.d.\@ and distributed according to  $\widehat{p}_{t}(\cdot)$.
\end{lem}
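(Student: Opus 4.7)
The argument is a union bound over matchings in a ``close-pair'' graph on $\Iintv{1,L}$, combined with the elementary duality between vertex covers and matchings.

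First, I would bound the probability that two independent samples from $\widehat{p}_{t_\ell}$ are close. Since $\widehat{p}_{t_\ell}$ is a discrete Gaussian density with $\max_y \widehat{p}_{t_\ell}(y)=1/C_{\widehat{p}}(t_\ell)=O(\widehat{\delta}/t_\ell)$ and $t_\ell \asymp r_\ell^2$, one has
\[
q := \widehat{\rm P}^{\otimes 2}_0\bigl(\|S_{t_\ell}^1-S_{t_\ell}^2\|< r_\ell^{1-\varepsilon_0}\bigr)
\le \pi (r_\ell^{1-\varepsilon_0})^2 \max_y \widehat{p}_{t_\ell}(y)
\le C\widehat{\delta}\, r_\ell^{-2\varepsilon_0}.
\]
For $\ell \ge 1$ one has $r_\ell^{-2\varepsilon_0} \le N^{-\varepsilon_0/M}$, hence $q \le C\widehat{\delta}\, N^{-\varepsilon_0/M}$.

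Let $G$ be the graph on $\Iintv{1,L}$ with edges $\{a,b\}$ whenever $\|S^a_{t_\ell}-S^b_{t_\ell}\|< r_\ell^{1-\varepsilon_0}$. The event in the lemma fails precisely when $G$ has no independent set of size $\ge (1-\varepsilon_0)L$, equivalently when its minimum vertex cover exceeds $\varepsilon_0 L$. Since in any graph the minimum vertex cover is bounded by twice the maximum matching (the endpoints of any maximum matching cover every edge), this forces $G$ to contain a matching of size $K:=\lceil \varepsilon_0 L/2\rceil$, i.e.\ $K$ vertex-disjoint close pairs.

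The key step is the union bound over such matchings. Because the samples indexed by vertex-disjoint pairs are mutually independent, each fixed matching of size $K$ has probability exactly $q^K$. The number of matchings of size $K$ in the complete graph on $L$ vertices is at most $L^{2K}/(2^K K!)$, so
\[
\widehat{\rm P}^{\otimes L}_0(\text{bad event})
\le \frac{L^{2K}}{2^K K!}\, q^K
\le \Bigl(\tfrac{eL^2 q}{2K}\Bigr)^{K}
\le \Bigl(\tfrac{eLq}{\varepsilon_0}\Bigr)^{\varepsilon_0 L/2}
\le \Bigl(\tfrac{eLC\widehat{\delta}}{\varepsilon_0}\Bigr)^{\varepsilon_0 L/2} N^{-\varepsilon_0^2 L/(2M)},
\]
using $K!\ge (K/e)^K$ and, for the penultimate inequality, that $f(K)=(eL^2q/(2K))^K$ is decreasing once $K>L^2q/2$ (which holds for $N$ large since $L^2 q \to 0$). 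Choosing $L\ge 4AM/\varepsilon_0^2$ makes $\varepsilon_0^2 L/(2M)\ge 2A$, and then taking $N$ large enough to absorb the (constant, $L$-dependent) prefactor into a factor $N^A$ yields the required bound $N^{-A}$.

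The main obstacle I anticipate is the realization that a plain Markov estimate on the total number of close pairs, of order $L^2 q \lesssim L^2 N^{-\varepsilon_0/M}$, is insufficient as soon as $A > \varepsilon_0/M$. Reducing the bad event to the existence of a matching unlocks independence across the $K$ disjoint pairs and exponentiates $q$, producing polynomial decay in $N$ of arbitrary order---at the cost of having to take $L$ large relative to $AM/\varepsilon_0^2$.
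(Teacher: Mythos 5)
Your proof is correct, and it takes a genuinely different combinatorial route from the paper's. You reduce the failure event (no large independent set in the close-pair graph on $\Iintv{1,L}$) to the presence of a large matching, via the standard bound that the minimum vertex cover is at most twice the maximum matching, and then union-bound over matchings; disjointness of matched pairs gives exact independence and hence probability exactly $q^K$ per matching. The paper instead builds the independent set greedily: $I_*$ collects the indices $b$ that are far from all $a<b$, and on the failure event its complement $J_*$ has $|J_*|\geq\varepsilon_0 L$; it then union-bounds over candidate complements $J$ and chains the conditional estimates $\widehat{\rm P}\bigl(\exists a<b,\ \|S^a_{t_\ell}-S^b_{t_\ell}\|<(r_\ell)^{1-\varepsilon_0}\,\big|\,S^1,\ldots,S^{b-1}\bigr)\leq LN^{-c}$ over $b\in J$, yielding $(LN^{-c})^{|J|}$. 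Both arguments succeed for the same structural reason: they each locate $\Theta(\varepsilon_0 L)$ conditionally (or fully) independent near-collision events, each of probability $N^{-\Theta(\varepsilon_0/M)}$, while the combinatorial union bound contributes only an $N$-independent prefactor. Your matching trick loses a factor of roughly $2$ in the exponent relative to the paper's vertex-level count ($K=\lceil\varepsilon_0 L/2\rceil$ disjoint pairs instead of $\geq\varepsilon_0 L$ vertices), but both approaches end up requiring $L\gtrsim AM/\varepsilon_0^2$, so the trade-off is immaterial. The two-point estimate $q\lesssim\widehat\delta\,N^{-\varepsilon_0/M}$, the Stirling bound on the matching count, and the calibration of $L$ at the end of your argument are all sound.
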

\begin{proof}
{Let $L\in \N$.} We prove the lemma by contradiction: suppose that for any $I\subset \Iintv{1,L}$ with $|I|\geq (1-\varepsilon_0)L,$ there exist $a\neq b\in I$ such that $\|S_{t_{\ell}}^a-S_{t_{\ell}}^b\|< (r_{\ell})^{1-\varepsilon_0}$.  We define $$I_*:=\{b\in \Iintv{1,L}|~\forall  a\in \Iintv{1,b-1},\,\|S_{t_{\ell}}^a-S_{t_{\ell}}^b\|\geq (r_{\ell})^{(1-\varepsilon_0)}\}.$$
Since $\|S_{t_{\ell}}^a-S_{t_{\ell}}^b\|\geq (r_{\ell})^{1-\varepsilon_0}$ for any $a<b\in I_*$, by the assumption, we have $|I_*|<(1-\varepsilon_0)L$, in particular $|J_*|\geq \varepsilon_0 L$ with $J_*:=\Iintv{1,L}\setminus I_*$.  This implication, together with the union bound, yields
\aln{
& \widehat{\rm P}^{\otimes L}_0(\forall I\subset \Iintv{1,L},\,|I|\geq (1-\varepsilon_0)L,\,\exists  a\neq b\in I,\,\|S_{t_{\ell}}^a-S_{t_{\ell}}^b\|\!< \!(r_{\ell})^{1-\varepsilon_0})\notag\\
 & \leq \sum_{J\subset \Iintv{1,L},|J|\geq \varepsilon_0 L} \widehat{\rm P}^{\otimes L}_0(\forall b\in J,\,\exists a\in\Iintv{1,b-1},\,\|S_{t_{\ell}}^a-S_{t_{\ell}}^b\|<(r_{\ell})^{1-\varepsilon_0} ) \notag \\
 &=  \sum_{J=(j_1<\ldots<j_{k})\subset \Iintv{1,L},k\geq \varepsilon_0 L} \hat{{\rm E}}^{\otimes L}_0\Big[\prod_{s=1}^{k} \mathbf{1}\{\exists a\in\Iintv{1,j_s -1},\,\|S_{t_{\ell}}^a-S_{t_{\ell}}^{j_s}\|<(r_{\ell})^{1-\varepsilon_0}\} \Big].\label{eq: separate diffusivity}
 }
Let $c=c(\varepsilon_0,M)>0$ be such that for any $k\in \Iintv{1,M-1}$, $\max_{z\in\mathbb{Z}^2}\widehat{\rm P}_0(\|S_{t_{k}}-z\|< (r_{k})^{1-\varepsilon_0})\leq N^{-c}$. We define $\mathcal{E}_b:=\{\exists a\in\Iintv{1,b-1},\,\|S_{t_{\ell}}^a-S_{t_{\ell}}^{b}\|<(r_{\ell})^{1-\varepsilon_0}\}$. Note that
\al{
 \widehat{\rm P}^{\otimes b}(\mathcal{E}_b|~S^1,\ldots,S^{b-1})&= \widehat{\rm P}^{\otimes b}(\exists a\!\in\!\Iintv{1,b-1},\|S_{t_{\ell}}^a-S_{t_{\ell}}^b\|\!<\!(r_{\ell})^{1-\varepsilon_0}|S^1,\ldots,S^{b-1})\\
&\leq L \max_{z\in\mathbb{Z}^2}\widehat{\rm P}(\|S_{t_{\ell}}-z\|< (r_{\ell})^{1-\varepsilon_0})\leq L N^{-c}\quad\text{a.s.}
}
Assume $ L \geq  (1+A)/(\varepsilon_0 c)$. If $N$ is large enough depending on $L,$   by Lemma~\ref{lem: diffusive scaling}, then \eqref{eq: separate diffusivity} can be  further bounded from above inductively by
\al{
& \sum_{J=(j_s)_{s=1}^{k}\subset \Iintv{1,L},k\geq \varepsilon_0 L} \widehat{\rm E}^{\otimes L}_0\Big[\prod_{s=1}^{k-1} \mathbf{1}_{\mathcal{E}_{j_s}} \widehat{\rm P}_0^{\otimes j_{k}}( \mathcal{E}_{j_{k}} |~S^1,\ldots,S^{j_{k}-1})\Big]\\
 &\leq \sum_{J=(j_s)_{s=1}^{k}\subset \Iintv{1,L},k\geq \varepsilon_0 L} LN^{-c}\widehat{\rm E}^{\otimes L}_0\Big[\prod_{s=1}^{k-1} \mathbf{1}_{\mathcal{E}_{j_s}} \Big]\\
 &\leq  \sum_{J=(j_s)_{s=1}^{k}\subset \Iintv{1,L},k\geq \varepsilon_0 L} L^L N^{-ck}\leq L^{L} 2^L N^{-c\varepsilon_0 L}\leq N^{-A},
 }
where  $2^L$ accounts for the choice of $J$.
 \end{proof}
 \begin{cor}\label{cor: diffusive separate}
Consider sequences $q_k = q_k(N)$ with $k\in \Iintv{2,M}$ satisfying \eqref{eq:conditionqk}. For any $M\in \N$ large enough and $\varepsilon_0>0$ small enough so that $q_k' := q_k/(1-\sqrt{\varepsilon_0})$ satisfies \eqref{eq:conditionqk}, if $L\in \N$ is large enough, then  for any $k\in \Iintv{2,M}$,    we have
     \al{
&\sum_{z_1,\ldots,z_{L}\in \mathbb{Z}^2} \widehat{p}_{t_{k-1}}(z_1)\cdots \widehat{p}_{t_{k-1}}(z_{L}) \mathcal{M}^{L}_{t_{k},q_\ell}(z_1,\ldots,z_{L}) \\
&\;\;\;\;\leq  N^{\varepsilon_0} \sup_{z_1,\ldots,z_L,\|z_i-z_j\|\geq (r_{k-1})^{1-\varepsilon_0}}  \E \left[{\prod_{i=1}^L} Z_{t_{k}}(z_i)^{q_k'/L}\right].
}
 \end{cor}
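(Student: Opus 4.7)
The plan is to split the summation domain $(\mathbb{Z}^2)^L$ into the ``good'' set
$$\mathcal{G}:=\bigl\{(z_1,\ldots,z_L):\exists I\subset\Iintv{1,L},\,|I|\geq(1-\varepsilon_0)L,\,\forall a\neq b\in I,\,\|z_a-z_b\|\geq (r_{k-1})^{1-\varepsilon_0}\bigr\}$$
and its complement $\mathcal{G}^c$. Lemma \ref{lem: diffusive scaling} applied with $\ell=k-1$ gives $\widehat{p}_{t_{k-1}}^{\otimes L}(\mathcal{G}^c)\leq N^{-A}$ for any prescribed $A$, once $L$ is chosen large enough. Combined with a crude polynomial-in-$N$ upper bound on $\sup_{(z_i)}\mathcal{M}^L_{t_k,q_k}(z_1,\ldots,z_L)$ (via Hölder and the moment estimates of Appendix \ref{app-C}, which apply thanks to \eqref{eq:conditionqk}), the contribution of $\mathcal{G}^c$ is $o(1)$, hence negligible compared to the allowed $N^{\varepsilon_0}$ prefactor.

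For a configuration in $\mathcal{G}$, I fix (say, lexicographically) an admissible $I=I(z_1,\ldots,z_L)$ and apply Hölder's inequality with exponents $p=1/(1-\sqrt{\varepsilon_0})$ and $q=1/\sqrt{\varepsilon_0}$:
$$\mathcal{M}^L_{t_k,q_k}(z_1,\ldots,z_L)\leq \mathbb{E}\!\left[\prod_{i\in I}Z_{t_k}(z_i)^{q_k'/L}\right]^{1-\sqrt{\varepsilon_0}}\!\!\cdot \mathbb{E}\!\left[\prod_{i\in I^c}Z_{t_k}(z_i)^{q_k/(\sqrt{\varepsilon_0}L)}\right]^{\sqrt{\varepsilon_0}}\!,$$
using $pq_k/L=q_k'/L$. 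The second factor has at most $\varepsilon_0 L$ terms and by AM-GM is bounded by $\sup_{z}\mathbb{E}[Z_{t_k}(z)^{\sqrt{\varepsilon_0}q_k}]^{\sqrt{\varepsilon_0}}$; since $q_k'$ satisfies \eqref{eq:conditionqk}, choosing $\varepsilon_0$ sufficiently small keeps $\sqrt{\varepsilon_0}q_k$ in the admissible moment range, and Appendix \ref{app-C} yields a contribution at most $N^{\varepsilon_0/2}$.

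For the first Hölder factor, I extend $(z_i)_{i\in I}$ to an $L$-tuple $(y_1,\ldots,y_L)$ still satisfying the $(r_{k-1})^{1-\varepsilon_0}$-separation by inserting $L-|I|\leq \varepsilon_0 L$ extra points placed at distance $\gg\sqrt{t_k}\log N$ from the rest, so that the added partition functions $Z_{t_k}(y_i)$ are independent of all other $Z_{t_k}(\cdot)$. Together with a uniform-in-$N$ lower bound $\mathbb{E}[Z_{t_k}^{q_k'/L}]\geq c>0$ (available in the subcritical regime, since $Z_{t_k}$ converges in law to a nondegenerate lognormal), this gives
$$\mathbb{E}\!\left[\prod_{i\in I}Z_{t_k}(z_i)^{q_k'/L}\right]\leq c^{-\varepsilon_0 L}\sup_{(y_1,\ldots,y_L)\text{ well-sep.}}\mathbb{E}\!\left[\prod_{i=1}^{L}Z_{t_k}(y_i)^{q_k'/L}\right].$$
The supremum on the right is itself $\geq c^L$ (by independence for super-far-apart tuples), which allows one to absorb the $(1-\sqrt{\varepsilon_0})$-power coming from Hölder at the cost of another $N$-independent constant. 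Summing against $\widehat{p}_{t_{k-1}}^{\otimes L}$ and using $\sum_{\mathcal{G}}\widehat{p}_{t_{k-1}}^{\otimes L}\leq 1$ yields the claim. The main obstacle is precisely this last extension step: bounding the $I$-indexed expectation by the full $L$-tuple supremum hinges on the uniform positive lower bound on low-order moments of $Z_{t_k}$, a feature that uses subcriticality in an essential way.
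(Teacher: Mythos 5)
Your decomposition and Hölder split are the same as the paper's: split $(\mathbb{Z}^2)^L$ into a good set and its complement via Lemma~\ref{lem: diffusive scaling}, kill the complement with the crude $N^{A}$ bound from Theorem~\ref{thm:qmomentupperBound}, then on the good set apply Hölder with exponents $1/(1-\sqrt{\varepsilon_0})$, $1/\sqrt{\varepsilon_0}$ and generalized Hölder for the $I^c$-factor. The extension from an $|I|$-tuple to a full $L$-tuple by inserting far-apart points is also what the paper does implicitly.

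The one flawed step is the justification you give for the ``main obstacle'': you invoke convergence in law of $Z_{t_k}$ to a nondegenerate lognormal to assert a uniform lower bound $\mathbb{E}[Z_{t_k}^{q_k'/L}]\geq c>0$. This does not follow, because the exponent $q_k'/L$ is not fixed --- it diverges with $N$ (the sequences $q_k$ satisfying~\eqref{eq:conditionqk} grow like $\sqrt{\log N}$, while $L$ is a fixed finite integer), and convergence in distribution gives no control over moments of unbounded order. The correct and much simpler observation is Jensen's inequality: $\mathbb{E}[Z_{t_k}]=1$ and $q_k'/L\geq 1$ for $N$ large, so $\mathbb{E}[Z_{t_k}^{q_k'/L}]\geq 1$. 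With this in hand your auxiliary constants $c^{-\varepsilon_0 L}$ and the later ``power-absorption'' step become unnecessary: every far-away added factor contributes an independent moment that is $\geq1$, so the $|I|$-tuple expectation is bounded by the $L$-tuple expectation; and raising a quantity $\geq1$ to the power $1-\sqrt{\varepsilon_0}$ only decreases it, so that power is harmlessly dropped. The remaining constant $2^L$ (from the choice of $I$) is absorbed into $N^{\varepsilon_0}$. After this repair your argument coincides with the paper's.
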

 \begin{proof}
Note that if there exists $I\subset \Iintv{1,L}$ such that $|I|\geq (1-\varepsilon_0)L$ and for all $a\neq b\in I,\,\|z_a-z_b\|\geq (r_{k-1})^{1-\varepsilon_0},$ then we have with $q'_k=q_k/(1-\sqrt{\varepsilon_0})$,
\al{\mathcal{M}^L_{t_{k},q_{k}}(z_1,\ldots,z_L)
&=\mathbb{E}\Big[\prod_{i\in I} Z_{t_{k}}(z_i)^{q_k/L} \prod_{j\notin I}Z_{t_{k}}(z_j)^{q_k/L}\Big]\\
&\leq \E \Big[\prod_{i\in I} Z_{t_{k}}(z_i)^{q'_k/L}\Big]^{1-\sqrt{\varepsilon_0}} \E \Big[\prod_{j\notin I} Z_{t_{k}}(z_j)^{q_k/(L\sqrt{\varepsilon_0})}\Big]^{\sqrt{\varepsilon_0}}\\
&\leq \E \Big[\prod_{i\in I} Z_{t_{k}}(z_i)^{q'_k/L}\Big]^{1-\sqrt{\varepsilon_0}} \E \Big[ Z_{t_{k}}^{q_k\sqrt{\varepsilon_0}}\Big]^{\sqrt{\varepsilon_0}},
}
where {in the first inequality,}  we have used {H\"older's inequality, and} in the last line, the 
generalized H\"older inequality $\mathbb{E}[|\prod_{i=1}^n X_i|]\leq\prod_{i=1}^n \mathbb{E}[| X_i|^n]^{1/n}$ {assuming $L{\varepsilon_0}>1$}. For $\varepsilon_0$ small enough, the second term on the right-most hand side is negligible, i.e., $\E \Big[ Z_{t_{k}}^{q_k\sqrt{\varepsilon_0}}\Big]=N^{\mathcal O(\varepsilon_0)}$ by Theorem~\ref{thm:qmomentupperBound}.  By the same theorem, 
if we take $A$ large enough, then
\al{
\sup_{z_1,\ldots,z_L}
\mathcal{M}^L_{t_{k},q_{k}}(z_1,\ldots,z_L)
\leq \mathbb{E}[Z_{t_{k}}^{q_k}]
\leq N^{A},
}
where we have used the generalized H\"older inequality in the first inequality as above. Putting things together with Lemma~\ref{lem: diffusive scaling}, we obtain that for any $\varepsilon_0>0$ small enough and $A>0$ large enough, if we take $L$ large enough depending on $\varepsilon_0,A$, then for $N$ large enough, 
    \aln{
&\sum_{z_1,\ldots,z_{L}\in \mathbb{Z}^2} \widehat{p}_{t_{k-1}}(z_1)\cdots \widehat{p}_{t_{k-1}}(z_{L}) \mathcal{M}^{L}_{t_{k},q_k}(z_1,\ldots,z_{L})\notag\\
&\leq N^{\varepsilon_0/2}\, 2^{L} \sup_{\substack{I\subset \Iintv{1,L},\,(z_i)_{i\in I} \\ \forall i \neq j \in I, \|z_i - z_j\| \geq (r_{k-1})^{1-\varepsilon_0}}}  \E \Big[\prod_{i\in I} Z_{t_{k}}(z_i)^{q_k'/L}\Big]^{1-\varepsilon_0} + 1\notag\\
&\leq  N^{\varepsilon_0} \sup_{(z_i)_{i=1}^L,\|z_i-z_j\|\geq (r_{k-1})^{1-\varepsilon_0}}  \E \Big[ {\prod_{i=1}^L} Z_{t_{k}}(z_i)^{q_k'/L}\Big],\label{Eq: second goal}
}
where the factor $2^L$  accounts for the  choices of $I$.
\end{proof}
It remains to estimate the last expectation of Corollary~\ref{cor: diffusive separate}. 

\subsection{Moment estimates for far away starting packs.}
For all $I\subset \Iintv{1,q}$, define $\mathcal C_I := \{(i,j):~i,j\in I,\,i<j\}$ and $\mathcal C_q=\mathcal C_{\Iintv{1,q}}.$
Define: 
\begin{equation} \label{eq:defpsi}
\psi_{s,t,I} := \beta_N^2\sum_{n=s}^{t} \sum_{(i,j)\in \mathcal C_I} \mathbf{1}_{S_n^i=S_n^j},
\end{equation}
and set $\psi_{n,I}:=\psi_{1,n,I}$.
We use the short-notation $\psi_{s,t,q}$ and $\psi_{t,q}$ when $I=\Iintv{1,q}$. 
We note that
\begin{equation} \label{eq:momentFormula}
\E\left[\prod_{i\in I} Z_{s,t}(x_i,\beta_N)\right] =  {\rm E}_{(x_i)_{i\in I}}^{\otimes |I|}\left[e^{\beta_N^2 \psi_{s,t,I}}\right].
\end{equation}
Recall the definition of $\lambda^2_{a,b}$ in \eqref{eq:def_lambda_T_Nbis}. The goal of this section is to prove the following.
\begin{lem}\label{lem: L business bound}
Consider sequences $q_k = q_k(N)$ with $k\in \Iintv{1,M}$ satisfying \eqref{eq:conditionqk}. For any $M\in\N$, there exists $C=C(\hat{\beta},\delta,M)>0$ such that,  if $\varepsilon_0$ is small enough and $L\in\N$ is large enough depending on $M$, then for any  $k\in\Iintv{1,M},$   and $z_u\in \mathbb Z^2$ with $\|z_u-z_v\|> (r_{k-1})^{1-\varepsilon_0}$ for all $u\neq v \in \Iintv{1,L}$, 
\begin{equation} \label{eq:ZqL}
\E \Big[ \prod_{u=1}^{L} Z_{t_k}(z_u)^{q_k/L}\Big]\leq C \exp{\left(\lambda^2_{(k-1)/M,k/M} \binom {q_k} 2 + \frac{\log N}{M^2} \right)}.
\end{equation}
\end{lem}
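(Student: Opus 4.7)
By the moment identity \eqref{eq:momentFormula},
\[
\E\Big[\prod_{u=1}^L Z_{t_k}(z_u)^{q_k/L}\Big] = {\rm E}^{\otimes q_k}_{\mathbf{z}}\bigl[e^{\beta_N^2 \psi_{t_k, q_k}}\bigr],
\]
where $\mathbf{z}$ consists of $q_k/L$ copies of each $z_u$ (assume $q_k/L \in \mathbb{N}$; the general case follows by interpolation). The plan is to extract the precise subcritical constant $\lambda^2_{(k-1)/M, k/M}$ by splitting the intersection count according to the pack structure and exploiting the separation of starting points.

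I would split $\psi_{t_k, q_k} = \psi^{\mathrm{intra}} + \psi^{\mathrm{inter}}$, where $\psi^{\mathrm{intra}} = \sum_u \psi^{(u,u)}$ counts intersections of walks in the same pack and $\psi^{\mathrm{inter}} = \sum_{u < v} \psi^{(u,v)}$ those across packs. The intra-pack contribution factorizes by independence of random walks across distinct $z_u$'s, producing $\prod_u \E[Z_{t_k}(z_u)^{q_k/L}] \leq C^L \exp\bigl(L \lambda^2_{0,k/M} \binom{q_k/L}{2}\bigr)$ via Theorem \ref{thm:qmomentupperBound}. Since $L\binom{q_k/L}{2} \leq q_k^2/(2L)$ and $q_k^2 = O(\log N)$ by \eqref{eq:conditionqk}, this is absorbed into the $\log N/M^2$ budget once $L$ is chosen large enough depending on $M$ and $\hat{\beta}$.

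The inter-pack contribution is where the separation hypothesis $\|z_u - z_v\| \geq r_{k-1}^{1-\varepsilon_0}$ is decisive. Splitting $\psi^{\mathrm{inter}}$ at time $t_{k-1}$: over $[1, t_{k-1}]$, standard Gaussian random-walk estimates give an expected intersection count between walks starting $d = r_{k-1}^{1-\varepsilon_0}$ apart of order $\log(t_{k-1}/d^2) = O(\varepsilon_0 (k-1)\log N/M)$ per pair. After multiplication by $\beta_N^2$ and summation over the $O(q_k^2)$ inter-pack pairs, this contributes at most $\log N/(2M^2)$ to the log-moment for $\varepsilon_0$ small enough depending on $M$. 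Over the late interval $(t_{k-1}, t_k]$, applying Theorem \ref{thm:qmomentupperBound} to the joint $q_k$-moment of the incremental partition function produces the factor $\exp(\lambda^2_{(k-1)/M, k/M} \binom{q_k}{2})$; here the identity $\lambda^2_{0, k/M} - \lambda^2_{0, (k-1)/M} = \lambda^2_{(k-1)/M, k/M}$ ensures that, after the early-time contribution has been peeled off into the intra-pack factor, the incremental subcritical constant is exactly $\lambda^2_{(k-1)/M, k/M}$.

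The main obstacle is making the intra/inter decoupling rigorous at the level of exponential moments while tracking the precise $\lambda^2$ constants: a naive H\"older inequality would multiply $\hat\beta$ by $\sqrt{p}>1$ inside Theorem \ref{thm:qmomentupperBound}, pushing it out of the subcritical regime or producing a strictly worse constant. Instead, one must engineer the parameter hierarchy $\varepsilon_0 \ll 1/M^2 \ll 1/L$ and use the Markov property of the random walks at $t_{k-1}$ to cleanly separate early- and late-time intersection statistics, so that the late-time contribution furnishes exactly the claimed $\lambda^2_{(k-1)/M, k/M}\binom{q_k}{2}$ while all other errors fit into $\log N/M^2$.
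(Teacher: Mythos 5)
Your high-level decomposition and the target constants are right, and you correctly diagnose the central difficulty: a blunt H\"older inequality between the intra-pack, early inter-pack, and late inter-pack contributions would replace $\hat\beta$ by a larger effective parameter inside Theorem~\ref{thm:qmomentupperBound} and ruin the constant $\lambda^2_{(k-1)/M,k/M}\binom{q_k}{2}$. But the proposal stops exactly at that point: after naming the obstacle, it says ``one must \ldots use the Markov property \ldots to cleanly separate early- and late-time intersection statistics'' without supplying the mechanism, and the preceding steps as written do not achieve the factorization. Specifically, writing $\psi_{t_k,q_k}=\psi^{\mathrm{intra}}+\psi^{\mathrm{inter}}$ and then invoking (a) independence across packs for $\E[e^{\psi^{\mathrm{intra}}}]$ and (b) a first-moment computation for the early-time $\psi^{\mathrm{inter}}$ does not control $\E[e^{\psi^{\mathrm{intra}}+\psi^{\mathrm{inter}}}]$: the three pieces are not independent, the intra-pack moment cannot be ``pulled out'' of the joint exponential, and bounding $\E[\psi^{\mathrm{inter}}_{\text{early}}]$ says nothing directly about its exponential moment inside the joint expectation.

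The paper's resolution is not an inequality between the three exponential moments but an exact algebraic factorization on a high-probability event, plus a crude bound off that event. Concretely: (i) define $\mathcal{E}'_k$ to be the event that some inter-pack pair meets before $t_{k-1}'=(t_{k-1})^{1-2\varepsilon_0}$; the separation $\|z_u-z_v\|\ge r_{k-1}^{1-\varepsilon_0}$ plus moderate deviations make $\mathrm{P}(\mathcal{E}'_k)$ stretched-exponentially small, and H\"older is applied only to absorb this tiny probability, so the constant is unaffected. (ii) On $(\mathcal{E}'_k)^c$, one has the \emph{identity} $\exp(\psi_{t_k,q_k})=\exp(\psi_{t_{k-1}'+1,t_k,q_k})\prod_u\exp(\psi_{t_{k-1}',\mathcal Q_u})$, and the Markov property at $t_{k-1}'$ then factorizes the expectation with no Jensen/H\"older loss. (iii) The intra-pack factor is bounded by independence of packs and Theorem~\ref{thm:qmomentupperBound}, of size $e^{O(L(q_k/L)^2)}$, which fits in the $\log N/M^2$ budget for large $L$. (iv) For the late-time factor one needs a starting configuration that is spread out; this is arranged by restricting to $D_k$ (at most $p_0$ close pairs at time $t_{k-1}'$), with $(\mathcal{E}_k'')^c$ high-probability, and then separating off the $\le 2p_0$ remaining close particles by a second H\"older with exponents $((1+\varepsilon_1)/\varepsilon_1, 1+\varepsilon_1)$: because only $O(1)$ particles are involved, this H\"older costs only $O(\log N)$ rather than inflating the leading term. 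Without the events $\mathcal{E}'_k$, $D_k$, $\mathcal{E}'''_k$ and the exact factorization on their complements, the decoupling you sketch cannot be carried through.
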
  
\begin{proof}
We assume that ${q_k}/{L}$ is an integer; if not, we replace $q_k$ with $L \lceil q_k / L \rceil$ to ensure divisibility, which does not affect the argument. Moreover, the claim follows directly from Theorem~\ref{thm:qmomentupperBound} when $k = 1$. Hence, we assume $k \in \Iintv{2,M}$ for the remainder of the proof.

Define the \emph{packs} of indices: $\mathcal Q_u := \llbracket ((u-1)q_k/L)+1,uq_k/L\rrbracket$ for $u\in \Iintv{1,L}$. We have by \eqref{eq:momentFormula}:
\[
\E \left[\prod_{i=1}^{L} Z_{t_k}(z_i)^{q_k/L}\right] = \rme^{\otimes q_k}_{z_1,\dots,z_{L}}\left[\exp(\psi_{t_k,q_k})\right], 
\]
where in the second expectation, the walks $(S^j,{j\in \mathcal Q_u})$  are all started at the same point $z_u\in \mathbb Z^2$ within each pack $u\in \Iintv{1,L}$. Now,  introduce the event that two walks from different packs meet before time $t_{k-1}':=(t_{k-1})^{1-2\varepsilon_0}$:
\[
\mathcal{E}'_k:= \left\{ \exists u\neq v \in \Iintv{1,L}, \exists i\in Q_u, \exists  j \in \mathcal Q_v, \exists  n\leq t_{k-1}' ,\, S_n^i= S_n^j \right\}.
\]
By moderate deviation estimates for simple random walks \cite[Theorem~2.3.11]{LL10} and the union bound, we have
\[\rmP^{\otimes q_k}_{z_1,\dots,z_{L}}(\mathcal{E}'_k) \leq Cq_k^2 N e^{-N^{\varepsilon_0/M}} \leq C e^{-N^{\varepsilon_0/M}/C}
\]
with some $C=C(\varepsilon_0,M)>1$. Hence, by H\"older’s inequality with $a^{-1}+b^{-1} = 1$ satisfying $a,b>1$, $\hat \beta \sqrt a < 1$, and \eqref{eq:conditionqk} holding with $a\beta_N^2$ in place of $\beta_N^2$, we have
\begin{equation} \label{eq:HolderProba}
\rme^{\otimes q_k}_{z_1,\dots,z_{L}}\left[\exp(\psi_{t_k,q_k}) \mathbf{1}_{\mathcal{E}'_k}\right] \leq C\rme^{\otimes q_k}_{z_1,\dots,z_{L}}\left[\exp(a\psi_{t_k,q_k})\right]^{1/a} e^{-\tfrac{N^{\varepsilon_0/M}}{Cb}} \leq C' e^{-\tfrac{N^{\varepsilon_0/M}}{C'}},
\end{equation}
with some $C'=C'(\hat \beta,\delta,M)>0$,
where we have used Theorem~\ref{thm:qmomentupperBound} which implies that $\sup_{X,M} \rme^{\otimes q_k}_{X}\left[\exp(a\psi_{t_k,q_k})\right] 
=N^{\mathcal O(1)}$. Now, let 
\[
    D_k:=\{X=(x_1,\dots,x_{q_k})\in (\mathbb Z^2)^{q_k} \!:\! \sharp \{(i,j)\in \mathcal C_{q_k} : \|x_i-x_j\|^2 \leq (t_{k-1})^{1-3\varepsilon_0}\}< p_0 \},
\]
and $\mathcal{E}_k'':=\{(S^i_{t_{k-1}'})_{i\in \Iintv{1,q_k}}\notin D_k\}$
where $p_0$ is a large but finite number chosen later. Suppose $\mathcal{E}_k''$ holds, i.e., there exist $(i_1,j_1),\ldots,(i_{p_0},j_{p_0})\in \kC_{q_k}$ such that $\|S_{t_{k-1}'}^i-S_{t_{k-1}'}^j\|^2 \leq (t_{k-1})^{1-3\varepsilon_0}$. We define 
$$I_*:=\{j\in \Iintv{1,q_k}:~\exists i \in [j-1],\,\|S_{t_{k-1}'}^i-S_{t_{k-1}'}^j\|^2\le  (t_{k-1})^{1-3\varepsilon_0} \}.$$
Since $\bigcup_{k=1}^{p_0} \{i_k,j_k\} \subset I_*$, we have $|I_*|\geq p_0$. Therefore, by the same argument as in Lemma~\ref{lem: diffusive scaling}, we conclude
\al{
&\sup_{x_1,\dots,x_{L} \in \mathbb Z^2} \rmP^{\otimes q_k}_{x_1,\dots,x_{L}}(\mathcal{E}_k'') \\
&\leq \!\!\sup_{x_1,\dots,x_{L} \in \mathbb Z^2}\! \sum_{I\subset \Iintv{1,q_k},|I|= p_0}\!\!\!\!\!\!\!\!\!\rmP^{\otimes q_k}_{x_1,\dots,x_{L}}(\forall j\in I, \exists i \in [j-1],\|S_{t_{k-1}'}^i-S_{t_{k-1}'}^j\|^2\!\le \! (t_{k-1})^{1-3\varepsilon_0} )\\
&\leq \binom{q_k}{p_0} {N^{-\varepsilon_0 p_0/M}}.
}
Similarly as in \eqref{eq:HolderProba}, we get that $
    \rmP^{\otimes q_k}_{z_1,\dots,z_{L}}\left[\exp(\psi_{N,q_k}) \mathbf{1}_{\mathcal{E}_k''}\right] \leq N^{-\frac{\epsilon_0  p_0}{C' M}+C'}$ for some $C'=C'(\hat \beta,\delta) > 0$.
Therefore, by choosing $p_0$ large enough depending on $M,C',\varepsilon_0$, we are reduced to the complementary event $(\mathcal{E}_k')^c\cap (\mathcal{E}_k'')^c$. 

On $(\mathcal{E}_k')^c$, we have $\exp(\psi_{t_k,q_k})\!=\!\exp(\psi_{t_{k-1}'+1,t_k,q_k})\prod_{u=1}^{L} \exp(\psi_{t_{k-1}'{+1},\mathcal Q_{u}})$. Hence, by Markov’s property 
we have
\begin{equation} \label{eq:keybound}
\begin{split}
        &\rme^{\otimes q_k}_{z_1,\dots,z_{L}}\left[\exp(\psi_{t_k,q_k}) \mathbf{1}_{(\mathcal{E}_k')^c\cap (\mathcal{E}_k'')^c}\right] \\&\leq \rme^{\otimes q_k}_{z_1,\dots,z_{L}}\left[\prod_{u=1}^{L} \exp(\psi_{t_{k-1}',\mathcal Q_{u}})  \right] \sup_{X\in D_k} \rme^{\otimes q_k}_X [\exp(\psi_{t_k-t_{k-1}',q_k})]\\
    &\leq \rme^{\otimes q_k}_{z_1,\dots,z_{L}}\left[\prod_{u=1}^{L} \exp(\psi_{t_{k-1}',\mathcal Q_{u}})  \right] \sup_{X\in D_k} \rme^{\otimes q_k}_X [\exp(\psi_{t_k,q_k})].
    \end{split}
\end{equation}
Now, by independence of the packs and Theorem~\ref{thm:qmomentupperBound}, we have
\begin{equation}
\begin{split}
\rme^{\otimes q_k}_{z_1,\dots,z_{L}}\left[\prod_{u=1}^{L} \exp(\psi_{t_{k-1}',\mathcal Q_{u}}) \right] 
&= \prod_{u=1}^{L} \rme^{\otimes (q_k/L)}_{z_u} \left[\exp(\psi_{t_{k-1}',\mathcal Q_{u}}) \right] \\
&\leq   e^{C'' L (q_k/L)^2},
\end{split}
\end{equation}
with some $C''=C''(\hat{\beta},\delta)>0$, where the bound holds uniformly in $z_1,\dots,z_{L}$.  It remains to control the second term of the right-hand side of \eqref{eq:keybound}.  For any $X=(x_1,\dots, x_{q_k})\in D_k$, observe that there are at most $2p_0$ particles that are at distance less than $(t_{k-1})^{(1-3\varepsilon_0)/2}$ from any other particle, that is,
\[
    \sharp \left\{i\leq q_k : \exists j\in \Iintv{1,q_k}\setminus\{i\}, \|x_i-x_j\|^2 \leq (t_{k-1})^{1-3\varepsilon_0}\right\} \leq 2p_0.
\]
Up to permuting indices, we can suppose that for any {$i\in \Iintv{2 p_0+1,q_k}$, $j\in \llbracket 1,q_k\rrbracket\setminus\{i\}$, we have} $ \|x_i-x_j\|^2 \geq (t_{k-1})^{1-3\varepsilon_0}$. By repeating the argument of \eqref{eq:HolderProba}, we can neglect the event
\[\mathcal{E}_k''':= \left\{ \exists i\in  \llbracket 2p_0+1,q_k\rrbracket, \exists j\in \Iintv{1,q_k}, \exists n\leq (t_{k-1})^{1-4\varepsilon_0},\, S_n^i= S_n^j \right\}.\]
Then on $(\mathcal{E}_k''')^c$, one has $\psi_{t_k,q_k} = \psi_{(t_{k-1})^{1-4\varepsilon_0},2p_0}+ \psi_{(t_{k-1})^{1-4\varepsilon_0}+1,t_k,q_k} $, so that by \eqref{eq:momentFormula}, for any $\varepsilon_1>0$ small enough,  for all $X = (x_i)_{i=1}^{q_k}\in D_k$, 
\begin{align*}
&\rme^{\otimes q_k}_X [\exp(\psi_{t_k,q_k})\mathbf{1}_{(\mathcal{E}_k''')^c}]  = \mathbb{E}\left[\left(\prod_{i=1}^{2 p_0} Z_{t_k}(x_i) \right)\left(\prod_{i=2 p_0+1}^{q_k} Z_{(t_{k-1})^{1-4\varepsilon_0},t_k}(x_i) \right)\right] \\
& \!\!\!\leq \mathbb{E}\left[\prod_{i=1}^{2 p_0} \left( Z_{t_k}(x_i) \right)^{(1+\varepsilon_1)/\varepsilon_1}\right]^{\varepsilon_1/(1+\varepsilon_1)} \mathbb{E}\left[\prod_{i=2 p_0+1}^{q_k} \left(Z_{(t_{k-1})^{1-4\varepsilon_0},t_k}(x_i) \right)^{1+\varepsilon_1}\right]^{1/(1+\varepsilon_1)}
 \\
 &\!\!\!\leq C(\varepsilon_1) \exp{\left(\lambda^2_{(1-4\varepsilon_0)(k-1)/M,k/M} (\hat{\beta})\left( \binom{q_k'} 2+\varepsilon_1 \log N \right)\right)},
\end{align*}
with {$q_k' = (1+\varepsilon_1)q_k$} and $C(\varepsilon_1)>0$, where we have used {the generalized} H\"older’s inequality and Theorem~\ref{thm:qmomentupperBound}.   The claim \eqref{eq:ZqL}
then holds by choosing $\varepsilon_0,\varepsilon_1$ small enough and $L$ large enough depending on $M$.
\end{proof}
The following lemma will be useful in collecting the estimates from this section.
\begin{lem}  \label{lem:combLem}
    Let $M>1$. Consider sequences $q_k = q_k(N)$ with $k\in \Iintv{1,M}$ satisfying \eqref{eq:conditionqk}. Let $\alpha=(\alpha_{\ell})_{\ell=k,\dots,M}$ with $\alpha_\ell \geq 0$. Then, for $N$ large enough, 
    a.s.
    \[
    \mathbb{P}(\log_+{\overline{W}_{k,N}}\geq \alpha_k,\mathcal{E}_{k,N}|~\mathcal{F}_{k-1})
    \leq N^{\frac{2}{M^2}} \exp{\left\{-q_k \alpha_k + \lambda^2_{(k-1)/M,k/M} \binom {q_k} 2 \right\}}.
    \]
Furthermore,  {for all $k\in \llbracket 1,M\rrbracket$,}
\[
\mathbb{P}\Big(\forall \ell \geq k,\,\log_+{\overline{W}_{\ell,N}}\geq \alpha_{\ell}\Big)
\leq N^{-2} \!+ \! N^{\frac 2 M} \exp\left\{-\sum_{\ell=k}^M q_\ell \alpha_\ell + \lambda^2_{(\ell-1)/M,\ell/M} \binom {q_\ell} 2\right\}.
\]
\end{lem}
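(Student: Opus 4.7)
The plan is to derive the first inequality by chaining Lemma~\ref{lem: a.s. L business}, Corollary~\ref{cor: diffusive separate}, and Lemma~\ref{lem: L business bound}, and then to obtain the second inequality by iterating the first along the filtration $(\mathcal{F}_\ell)$, using Lemma~\ref{lem: decay of kE} to discard the complement of $\mathcal{E}_{M,N}$.

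For the first inequality (in the regime $k\geq 2$; the case $k=1$ reduces directly to Markov's inequality together with Theorem~\ref{thm:qmomentupperBound} applied to $Z_{t_1}$), I would start from Lemma~\ref{lem: a.s. L business} to obtain, almost surely,
\[
\mathbb{P}\bigl(\log_+ \overline{W}_{k,N} \geq \alpha_k,\, \mathcal{E}_{k,N} \bigm| \mathcal{F}_{k-1}\bigr)
\leq N^{1/M^2}\, e^{-q_k \alpha_k} \sum_{z_1,\ldots,z_L} \widehat{p}_{t_{k-1}}(z_1) \cdots \widehat{p}_{t_{k-1}}(z_L)\, \mathcal{M}^L_{t_k,q_k}(z_1,\ldots,z_L).
\]
Next, for $\varepsilon_0>0$ small and $L$ large enough (both depending on $M$), Corollary~\ref{cor: diffusive separate} bounds this sum by $N^{\varepsilon_0}\sup \mathbb{E}\bigl[\prod_{i=1}^L Z_{t_k}(z_i)^{q_k'/L}\bigr]$ over well-separated $L$-tuples, with $q_k' = q_k/(1-\sqrt{\varepsilon_0})$, and Lemma~\ref{lem: L business bound} applied with $q_k'$ in place of $q_k$ dominates each such expectation by $C\exp\bigl(\lambda^2_{(k-1)/M,k/M} \binom{q_k'}{2} + \log N/M^2\bigr)$.

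The only non-mechanical step is to replace $\binom{q_k'}{2}$ by $\binom{q_k}{2}$: since $q_k' - q_k = O(\sqrt{\varepsilon_0})\, q_k$, the excess equals $O(\sqrt{\varepsilon_0})\,\lambda^2_{(k-1)/M,k/M} \binom{q_k}{2}$, which under \eqref{eq:conditionqk} is $O(\sqrt{\varepsilon_0} \log N)$ (this growth condition is precisely what makes the underlying moment bound polynomial in $N$). Choosing $\varepsilon_0$ small enough as a function of $M$ absorbs this correction into a factor $N^{O(1/M^2)}$, and the prefactors $N^{1/M^2}$, $N^{\varepsilon_0}$, $N^{1/M^2}$ combine (after possibly further shrinking $\varepsilon_0$) to the announced $N^{2/M^2}$.

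For the second inequality, I would apply Lemma~\ref{lem: decay of kE} with $A=2$ to bound the contribution of $\mathcal{E}_{M,N}^c$ by $N^{-2}$, and work on $\mathcal{E}_{M,N}$. The monotonicity $\mathcal{E}_{M,N}\subseteq \mathcal{E}_{\ell,N}$ for every $\ell \leq M$ (immediate from \eqref{Eq: max W}) gives $\mathbf{1}_{\mathcal{E}_{M,N}} \leq \prod_{\ell=k}^M \mathbf{1}_{\mathcal{E}_{\ell,N}}$. Since $\mathcal{E}_{\ell,N}$ is $\mathcal{F}_{\ell-1}$-measurable while the events $\{\log_+ \overline{W}_{\ell,N}\geq \alpha_\ell\}$ are $\mathcal{F}_\ell$-measurable, conditioning successively on $\mathcal{F}_{M-1}, \mathcal{F}_{M-2}, \ldots, \mathcal{F}_{k-1}$ and applying the first inequality at each step peels off one factor per level, producing a prefactor $N^{2(M-k+1)/M^2} \leq N^{2/M}$ in front of $\exp\bigl(-\sum_{\ell=k}^M q_\ell \alpha_\ell + \lambda^2_{(\ell-1)/M,\ell/M}\binom{q_\ell}{2}\bigr)$. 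The main delicate point is therefore the first inequality itself, namely the careful balancing of the small exponents $\varepsilon_0$ and $1/M^2$ and the Taylor correction between $q_k$ and $q_k'$ against the polynomial growth allowed by \eqref{eq:conditionqk}; everything else is a clean martingale-style iteration.
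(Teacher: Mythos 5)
Your proposal is correct and takes the same route as the paper: the first inequality is obtained by chaining Lemma~\ref{lem: a.s. L business}, Corollary~\ref{cor: diffusive separate}, and Lemma~\ref{lem: L business bound} (the paper devotes exactly one sentence to this, leaving the $q_k\to q_k'$ adjustment and the absorption of small corrections implicit, which you correctly identify and spell out), and the second inequality is obtained by the identical tower-property iteration using the nesting $\mathcal{E}_{M,N}\subseteq\mathcal{E}_{\ell,N}$ and the $\mathcal{F}_{\ell-1}$-measurability of $\mathcal{E}_{\ell,N}$, with Lemma~\ref{lem: decay of kE} handling $\mathcal{E}_{M,N}^c$.

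One small caveat on bookkeeping: after applying all three cited results you accumulate prefactors $N^{1/M^2}\cdot N^{\varepsilon_0}\cdot C\,N^{1/M^2}\cdot N^{O(\sqrt{\varepsilon_0})}$, which is $N^{2/M^2+o(1)}$ but not literally $N^{2/M^2}$ unless the constant $\log N/M^2$ in Lemma~\ref{lem: L business bound} is interpreted with enough slack to absorb $N^{\varepsilon_0+O(\sqrt{\varepsilon_0})}$ by shrinking $\varepsilon_0$ and $\varepsilon_1$ in that lemma's proof. This is exactly the tolerance the authors implicitly assume, so your account is faithful to the intended argument; it would be cleaner to either state the bound with $N^{3/M^2}$ (say) or note that the $\log N/M^2$ slack in Lemma~\ref{lem: L business bound} can be taken strictly smaller than $\log N/M^2$ by a $\sqrt{\varepsilon_0}$-dependent margin.
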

\begin{proof}
Lemma~\ref{lem: a.s. L business},  Corollary~\ref{cor: diffusive separate}, and Lemma~\ref{lem: L business bound} yield the first claim.
We estimate
\begin{align*}
&\mathbb{P}\left(\forall \ell\geq k,\,\log_+{\overline{W}_{\ell,N}}\geq \alpha_{\ell}; \mathcal{E}_{M,N}\right)\\
&\leq  \mathbb{E}\left[\prod_{\ell = k}^{M-1} \mathbf{1}_{\{\log_+{\overline{W}_{\ell,N}}\geq \alpha_{\ell}\}} \mathbf{1}_{\mathcal{E}_{M-1,N}}\mathbb{P}( \log_+{\overline{W}_{M,N}}\geq \alpha_M;\mathcal{E}_{M,N}|~\kF_{M-1})\right],
\end{align*}
and apply the first claim iteratively.
The second claim follows by Lemma~\ref{lem: decay of kE}.
\end{proof}
\subsection{Proof of \eqref{Eq: goal for upper bound}} 
Fix $\delta \in (0,1/3)$ a real number that may depend on $\varepsilon$; the exact dependence is chosen below.
Define
\begin{equation}\label{def:q_khat} \widehat{q}_k := \frac{\sqrt{2}(1-\delta)\sqrt{\log N}}{\sqrt{M}\lambda_{(k-1)/M,k/M}}.
\end{equation}
By Proposition~\ref{prop: first goal for upper bound} and Lemma \ref{lem:combLem}, and  a union bound over $M$ events, we have
\begin{align}
  &\mathbb{P}\Big(\exists x\in \Lambda_{N^{1/2}}, \sum_{k=1}^M \log W_{k,N}(x) > \tfrac{\sqrt{2}(1+\varepsilon)}{\sqrt{M}}\sum_{i=1}^M  \lambda_{\tfrac{i-1}{M},\tfrac{i}{M}} \sqrt{\log N} + 2\varepsilon\sqrt{\log N}\Big)\nonumber\\
    &\leq N^{-1+\frac{2}{M}}+{M}N^{\frac{5}{M}}\nonumber\\
    & \times \!\! \! \sup_{k\in\Iintv{1,M}} \!\left\{ N^{\tfrac{M-k}{M}} \!\exp{\Big\{\Big(-\!\!\inf_{(\alpha_k,\ldots,\alpha_M)\in {\rm Barrier}^{\varepsilon}_{k,M}} \sum_{\ell=k}^M \widehat{q}_\ell \alpha_\ell\Big)\!+\!\frac12\sum_{\ell=k}^M  \lambda^2_{(\ell-1)/M,\ell/M} \widehat{q}_\ell^2\Big\}}\!\right\}. \label{eq-180225a}
\end{align}
Let
 $$ f(u) := \frac{\sqrt{2}(1+\varepsilon)  \lambda_{(u-1)/M,u/M}}{\sqrt{M}}\sqrt{\log N} = \frac{2(1+\varepsilon)(1-\delta) \log N}{M \widehat{q}_u}  ,$$ and $a:=  \varepsilon \sqrt{\log N}$, and write
 $$ \sum_{\ell=k}^M \widehat{q}_\ell \alpha_\ell=
 \frac{2(1+\varepsilon)(1-\delta)\log N}{M}\sum_{\ell=k}^M  \frac{\alpha_\ell}{f(\ell)}.$$
Applying Lemma~\ref{lem: variationalProblems} with $g(\ell)=\alpha_\ell$ to estimate
the infimum in \eqref{eq-180225a}, we get that
the supremum in \eqref{eq-180225a} can be further bounded by
\al{
&\sup_{k\in\Iintv{1,M}} \bigg\{ e^{-\widehat{q}_M \varepsilon \sqrt{\log N}}  N^{\tfrac{M-k}{M} } \\
&\qquad \times \exp{\Big( -2(1-\delta) (1+\varepsilon) \frac{M-k+1}{M}\log N + \frac{M-k+1}{M}(1-\delta)^2\log N\Big)}\bigg\},
}
which goes to $0$ as $N\to\infty$ if we take $M$ large enough and $\delta>0$ small enough  depending on $\varepsilon$. 
\subsection{A variant moment estimate for partition functions}
As a byproduct from the previous arguments, we have the following:
\begin{cor}\label{cor: VariantMomentEstimate}
    For any $\varepsilon_0>0$ and sequences $q_k = q_k(N)$ with $k\in \Iintv{1,M}$ satisfying \eqref{eq:conditionqk}, we have for all $N$ large,
    \al{
\mathbb{E}\Big[\prod_{\ell=1}^M ({W}_{\ell,N})^{q_\ell}\Big]\leq     N^{\varepsilon_0} \prod_{\ell=1}^M e^{\binom{q_\ell}{2} \lambda_{(\ell-1)/M,\ell/M}^2 }.
    }
\end{cor}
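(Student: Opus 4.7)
The plan is to iterate the conditional moment bound from Lemma~\ref{lem: a.s. L business} (applied with $\alpha_\ell=0$, which reduces it to $\mathbb{E}[\overline{W}_{\ell,N}^{q_\ell}\mid \mathcal{F}_{\ell-1}]\mathbf{1}_{\mathcal{E}_{\ell,N}}\leq N^{M^{-2}}\sum \widehat{p}\cdots \widehat{p}\,\mathcal{M}^L_{t_\ell,q_\ell}$) combined with Corollary~\ref{cor: diffusive separate} and Lemma~\ref{lem: L business bound}, working from $\ell=M$ down to $\ell=1$ on the high-probability event $\mathcal{E}_{M,N}\subset \mathcal{E}_{\ell,N}$; the complement $\mathcal{E}_{M,N}^c$ will be absorbed using Lemma~\ref{lem: decay of kE} together with a crude polynomial moment bound derived from Theorem~\ref{thm:qmomentupperBound}.

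First I would split $\mathbb{E}[\prod_\ell W_{\ell,N}^{q_\ell}]$ according to $\mathcal{E}_{M,N}$ and its complement. For the off-event contribution, H\"older's inequality with exponent $p>1$ reduces the task to bounding $\mathbb{E}[\prod_\ell W_{\ell,N}^{pq_\ell}]^{1/p}\cdot \mathbb{P}(\mathcal{E}_{M,N}^c)^{(p-1)/p}$. A generalized H\"older inequality, combined with the observation that $\mathbb{E}[W_{\ell,N}^{r}\mid \mathcal{F}_{t_{\ell-1}}]=\sum_{y_1,\dots,y_r}\mu_{t_{\ell-1}}(y_1)\cdots \mu_{t_{\ell-1}}(y_r)\mathbb{E}[\prod_i Z(y_i)]\leq N^{O(1)}$ for any fixed integer $r$ (by Theorem~\ref{thm:qmomentupperBound}), gives a polynomial bound on the first factor, while Lemma~\ref{lem: decay of kE} with $A$ taken large gives $\mathbb{P}(\mathcal{E}_{M,N}^c)\leq N^{-A}$; the product is thus $o(1)$ and absorbed by $N^{\varepsilon_0}$.

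For the on-event contribution, since $\mathcal{E}_{M,N}$ is $\mathcal{F}_{M-1}$-measurable and is nested in every $\mathcal{E}_{\ell,N}$ with $\ell\leq M$, the indicator passes freely across conditional expectations. Applying Lemma~\ref{lem: a.s. L business}, then Corollary~\ref{cor: diffusive separate} (with a small parameter $\varepsilon_0'$ in lieu of the $\varepsilon_0$ appearing there), and finally Lemma~\ref{lem: L business bound} yields, at each scale $\ell$,
\[
\mathbb{E}[W_{\ell,N}^{q_\ell}\mid \mathcal{F}_{\ell-1}]\,\mathbf{1}_{\mathcal{E}_{M,N}}\leq C\,N^{O(1/M^2)+\varepsilon_0'}\exp\!\left(\lambda^2_{(\ell-1)/M,\ell/M}\binom{q_\ell'}{2}\right)\mathbf{1}_{\mathcal{E}_{M,N}},
\]
with $q_\ell':=q_\ell/(1-\sqrt{\varepsilon_0'})$. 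Iterating from $\ell=M$ down to $\ell=1$ telescopes to the deterministic upper bound $C^M N^{O(1/M)+M\varepsilon_0'}\prod_\ell\exp(\lambda^2_{(\ell-1)/M,\ell/M}\binom{q_\ell'}{2})$.

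The main (and really only) obstacle is then the final bookkeeping: passing from $\binom{q_\ell'}{2}$ back to $\binom{q_\ell}{2}$ in the exponent and absorbing the accumulated prefactor $N^{O(1/M)+M\varepsilon_0'}$ into the single target $N^{\varepsilon_0}$. Since \eqref{eq:conditionqk} forces $\lambda^2_{(\ell-1)/M,\ell/M}q_\ell^2\lesssim \log N$ at each scale, the excess $\binom{q_\ell'}{2}-\binom{q_\ell}{2}=O(\sqrt{\varepsilon_0'})q_\ell^2$ contributes at most $N^{O(M\sqrt{\varepsilon_0'})}$ overall. Choosing $\varepsilon_0'$ small enough depending on $\varepsilon_0$ and $M$ (and then $N$ large enough depending on these parameters) bounds the total loss by $N^{\varepsilon_0}$, completing the argument.
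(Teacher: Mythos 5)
Your proposal matches the paper's own proof essentially step for step: split according to $\mathcal{E}_{M,N}$, bound the off-event piece via H\"older together with a crude $N^{O(1)}$ bound on $\mathbb{E}[\prod_\ell W_{\ell,N}^{(1+\delta)q_\ell}]$ (from Markov's property and Theorem~\ref{thm:qmomentupperBound}) and the super-polynomial decay of $\mathbb{P}(\mathcal{E}_{M,N}^c)$ from Lemma~\ref{lem: decay of kE}, and bound the on-event piece by iterating Lemma~\ref{lem: a.s. L business} with $\alpha_\ell=0$, Corollary~\ref{cor: diffusive separate}, and Lemma~\ref{lem: L business bound} scale by scale. One small imprecision worth flagging: you write that $\mathbb{E}[W_{\ell,N}^{r}\mid\mathcal{F}_{\ell-1}]\leq N^{O(1)}$ ``for any fixed integer $r$,'' but in the off-event bound $r=pq_\ell$ is of order $\sqrt{\log N}$, not fixed; the $N^{O(1)}$ bound still holds because Theorem~\ref{thm:qmomentupperBound} is designed precisely for such growing $q$ as long as $pq_\ell$ still satisfies \eqref{eq:conditionqk}, which requires taking $p-1$ small depending on how close $q_\ell$ is to the boundary of the condition — the same requirement the paper imposes on its $\delta$.
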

\begin{proof}
Assume $\varepsilon_0>0$. Let $\delta>0$ be small enough and satisfying $q_k'=(1+\delta)q_k\in \N$, which will be chosen later. By \eqref{eq:defpsi} and Markov's property, we have
\aln{\label{eq: variant moment estimate}
\mathbb{E}\left[\prod_{k=1}^{M} ({W}_{k,N})^{q_k'}\right] \leq \prod_{k=1}^{M} \sup_{\mathbf x} \rme_{\mathbf x}^{\otimes q_k'} \left[ {e^{\psi_{t_k-t_{k-1},q_k'}}}\right]
\leq N^{C_1},
}
for some $C_1=C_1(\hat \beta,\delta,M)$, by Theorem \ref{thm:qmomentupperBound}.  
 By Lemma~\ref{lem: decay of kE}, for any $A>0$, we have that for all $N$ large,
\al{
\mathbb{E}\left[\prod_{k=1}^{M} ({W}_{k,N})^{q_k} \mathbf{1}_{\kE_{M,N}^c}\right]&\leq \mathbb{E}\left[\prod_{k=1}^{M} {W}_{k,N}^{q_k'}\right]^{1/(1+\delta)} \mathbb{P}(\kE_{M,N}^c)^{\delta/(1+\delta)}\leq N^{-A}.
}
On the other hand, by Lemma~\ref{lem: a.s. L business} and Corollary~\ref{cor: diffusive separate},  
we have
    \al{
 & \mathbb{E}\left[\prod_{k=1}^{M} ({W}_{k,N})^{q_k} \mathbf{1}_{\kE_{M,N}}\right]\\
&\leq N^{-M^{-1}}  \prod_{\ell=1}^M \sum_{z_1,\ldots,z_{L}\in \mathbb{Z}^2} \widehat{p}_{t_{\ell-1}}(z_1)\cdots \widehat{p}_{t_{\ell-1}}(z_{L}) \mathcal{M}^{L}_{t_{\ell},q_\ell}(z_1,\ldots,z_{L})\notag\\
&\leq  N^{\varepsilon_0} \prod_{\ell=1}^M \sup_{z_1,\ldots,z_L,\|z_i-z_j\|\geq (r_{\ell-1})^{1-\varepsilon_0}}  \E \Big[\prod_{i=1}^L Z_{t_{\ell}}(z_i)^{q_\ell'(1+\varepsilon_0)/L}\Big].
}
Together with Lemma~\ref{lem: L business bound}, the proof is completed.
\end{proof}
\section{Lower bound}
We fix $\delta,\varepsilon_1\in (0,1/3)$, and $L,M\in\N$, which are chosen later. For simplicity of notation, we assume $(1-\varepsilon_1)M\in \N.$ Set 
$$\alpha_\ell:= \sqrt{2}(1-2\delta) \lambda_{(k-1)/M,k/M}\sqrt{\log N}/\sqrt{M}, \quad \alpha_\ell':=\sqrt{2}  \lambda_{(k-1)/M,k/M}\sqrt{\log N}/\sqrt{M}.$$ 
Recall $\widetilde{W}_{k,N}(x)$ in \eqref{eq-tildeW}. We define $$\widetilde{\mathcal{A}}_{M}(x):=\{\log \widetilde{W}_{\ell,N}(x)\geq \alpha_\ell,\,\forall \ell\in \Iintv{1,(1-\varepsilon_1)M}\}.$$ 
We first show the following:
\begin{lem}\label{lem: key lemma for lower bound}
    For any $\varepsilon,\varepsilon_1>0$, if $M$ is large enough and $\delta$ is small enough, then for any $N\in \N$ large enough, for any  $k\in\Iintv{1,(1-\varepsilon_1)M}$, and  $x,y\in \Lambda_{\sqrt{N}}$ with $\|x-y\| \geq  r_{k}$,
\aln{
& \mathbb{P}(\widetilde{\mathcal{A}}_{M}(x)\cap \widetilde{\mathcal{A}}_{M}(y))\leq N^{-1-\frac{k-1}{M}+\varepsilon_1+\varepsilon},\label{eq:upperBouondForIntersection}\\
& \mathbb{P}(\widetilde{\mathcal{A}}_{M}(x))\geq  N^{-1+\varepsilon_1-\varepsilon}. \label{Eq: lower bound for single}
}
\end{lem}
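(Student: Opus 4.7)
My plan is to establish both estimates through Markov-type inequalities anchored to the joint exponential-moment bounds already available, i.e.\ Corollary~\ref{cor: VariantMomentEstimate} together with its first-moment counterpart Lemma~\ref{lem: lower bound of moments}. The key structural input is that the localization in \eqref{eq-tildeW}, via the restriction $\|z-x\|\leq r_{\ell-1}\log N$ on Gibbs-measure endpoints together with the spatial constraint inside $\widetilde Z$, implies that $\widetilde W_{\ell,N}(x)$ is a function only of the disorder in $\Iintv{1, t_\ell}\times \Lambda_{C r_\ell \log N}(x)$ for an absolute constant $C$. Since $r_k/r_{k-1}=N^{1/(2M)}$ dominates $\log N$ for $N$ large, the hypothesis $\|x-y\|\geq r_k$ forces the families $(\widetilde W_{\ell,N}(x))_{\ell\leq k-1}$ and $(\widetilde W_{\ell,N}(y))_{\ell\leq k-1}$ to be built from disjoint disorder, hence independent.

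\emph{Lower bound \eqref{Eq: lower bound for single}.} By translation invariance take $x=0$. The matching upper/lower moment estimates yield a Gaussian-like behavior of $\log \widetilde W_{\ell,N}(0)$ with mean $\sim-\lambda_{(\ell-1)/M,\ell/M}^2/2$ and variance $\sim \lambda_{(\ell-1)/M,\ell/M}^2$, which translates into $\mathbb P(\log \widetilde W_{\ell,N}(0)\in [\alpha_\ell,\alpha_\ell+1])\gtrsim N^{-(1-2\delta)^2/M-o(1)}$ via a Paley--Zygmund bound applied conditionally on $\mathcal F_{t_{\ell-1}}$ and on the event $\mathcal E_{\ell,N}$ of Lemma~\ref{lem: decay of kE}. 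Iterating over $\ell\leq (1-\varepsilon_1)M$ by the tower property, using that the dependence on the past is uniformly controlled through the Gibbs-measure localization, gives
\[\mathbb P(\widetilde{\mathcal A}_M(0))\geq N^{-(1-\varepsilon_1)(1-2\delta)^2-o(1)}\geq N^{-1+\varepsilon_1-\varepsilon}\]
for $\delta$ chosen small enough, proving \eqref{Eq: lower bound for single}.

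\emph{Upper bound \eqref{eq:upperBouondForIntersection}.} Drop the constraints at $y$ for $\ell\geq k$ and apply Markov's inequality with exponents $q_\ell=\sqrt 2(1-\delta)\sqrt{\log N}/(\sqrt M\,\lambda_{(\ell-1)/M,\ell/M})$:
\[\mathbb P(\widetilde{\mathcal A}_M(x)\cap \widetilde{\mathcal A}_M(y))\leq e^{-\sum_{\ell\leq(1-\varepsilon_1)M}q_\ell\alpha_\ell - \sum_{\ell\leq k-1}q_\ell\alpha_\ell}\mathbb E\biggl[\prod_\ell \widetilde W_{\ell,N}(x)^{q_\ell}\prod_{\ell\leq k-1}\widetilde W_{\ell,N}(y)^{q_\ell}\biggr].\]
The joint moment is bounded in two steps: (i) for $\ell\leq k-1$, the $y$-factors are spatially independent of the $x$-factors so factor out exactly; (ii) the residual correlation between $\widetilde W_{\ell,N}(y)$ ($\ell\leq k-1$) and $\widetilde W_{\ell',N}(x)$ ($\ell'\geq k$), which may enter through disorder placed near $y$ by $\mu_{t_{\ell'-1}}(x,\cdot)$, is neutralized by further restricting $\mu$ to endpoints at distance $\geq \sqrt{t_{\ell'}}\log N$ from $y$, legitimate on the event $\mathcal E_{M,N}$. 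Applying Corollary~\ref{cor: VariantMomentEstimate} separately to the $x$ and $y$ parts and performing the exponential-weight computation as in the upper-bound analysis of Section~\ref{section: decoupling} yields
\[\mathbb P(\widetilde{\mathcal A}_M(x)\cap \widetilde{\mathcal A}_M(y))\leq N^{\varepsilon_0}\cdot N^{-(1-\varepsilon_1)(1-O(\delta))}\cdot N^{-(k-1)(1-O(\delta))/M}\leq N^{-1-(k-1)/M+\varepsilon_1+\varepsilon}\]
for $\delta,\varepsilon_0$ sufficiently small compared to $\varepsilon$.

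\emph{Main obstacle.} The delicate step is point (ii): exact spatial disjointness is only available when $\ell,\ell'\leq k-1$, and the later $x$-factors can a priori sample disorder in $\Lambda_{C r_{k-1}\log N}(y)$. Resolving this requires a two-starting-point variant of the decoupling chain (Lemmas~\ref{lem: a.s. L business} and \ref{lem: L business bound}) that localizes $\mu_{t_{\ell'-1}}(x,\cdot)$ away from $y$ at each scale $\ell'\geq k$. This is the polymer analog of the branching-time splitting familiar from the BBM literature, and is precisely what the structure of $\widetilde W$ (rather than $W$) was designed to enable.
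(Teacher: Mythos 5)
Your approach for both bounds diverges structurally from the paper's, and each contains a genuine gap.

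For the upper bound \eqref{eq:upperBouondForIntersection}, the paper keeps the barrier constraint at $y$ for all scales $\ell\leq(1-\varepsilon_1)M$ but drops the $x$-constraint for $\ell\geq k$, and then bounds the probability by a chain of \emph{conditional} Markov inequalities, conditioning successively on $\mathcal F_{(1-\varepsilon_1)M-1},\dots,\mathcal F_{k-1}$ and invoking Lemma~\ref{lem:combLem} at each step. Because Lemma~\ref{lem:combLem} delivers a uniform almost-sure bound on $\mathbb P(\log_+\overline W_{\ell,N}\geq\alpha_\ell,\mathcal E_{\ell,N}\mid\mathcal F_{\ell-1})$, the cross-scale correlations you highlight in your point (ii) simply never need to be analyzed. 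Your route --- direct Markov on the joint moment $\mathbb E\bigl[\prod_\ell\widetilde W_{\ell,N}(x)^{q_\ell}\prod_{\ell\leq k-1}\widetilde W_{\ell,N}(y)^{q_\ell}\bigr]$ followed by factorization --- genuinely runs into the obstacle you identify: for $\ell'\geq k$, $\widetilde W_{\ell',N}(x)$ depends on $\mu_{t_{\ell'-1}}(x,\cdot)$, which in turn depends on disorder near $y$ at early times. Your proposed remedy (``two-starting-point variant of the decoupling chain'') is not carried out and entails substantially more machinery than the paper's conditional chain. In addition, your point (i) is stated imprecisely: $\widetilde W_{\ell,N}(x)$ is \emph{not} a function only of disorder in a box around $x$, because the normalization $Z_{t_{\ell-1}}(x)$ in $\mu_{t_{\ell-1}}(x,\cdot)$ sums over unrestricted polymer paths; what actually holds, and what the paper uses, is conditional independence of $\widetilde W_{\ell,N}(x)$ and $\widetilde W_{\ell,N}(y)$ given $\mathcal F_{\ell-1}$.

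For the lower bound \eqref{Eq: lower bound for single}, the paper performs a single global change of measure, tilting the joint law by $\prod_\ell\widetilde W_{\ell,N}^{q_\ell}$, and then uses the matching joint moment estimates (Corollary~\ref{cor: VariantMomentEstimate} for the upper side, Lemma~\ref{lem: lower bound of moments} for the lower side) to show that under the tilted measure the event $\{\log\widetilde W_{k,N}\in[\alpha_k,\alpha_k'],\,\forall k\}$ has probability at least $1/2$. Your scale-by-scale conditional Paley--Zygmund argument is a genuinely different route, and plausible in spirit, but the crucial ingredient --- a uniform almost-sure \emph{lower} bound on the conditional moments $\mathbb E[\widetilde W_{\ell,N}^{q_\ell}\mid\mathcal F_{\ell-1}]$, even restricted to $\mathcal E_{\ell,N}$ --- is neither available in the paper nor established in your sketch. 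Such a bound can fail on bad $\mathcal F_{\ell-1}$-measurable events (e.g.\ when $\mu_{t_{\ell-1}}(0,\cdot)$ has little mass near the origin), which is precisely the subtlety that Lemma~\ref{lem: lower bound of moments} handles by working with unconditional joint expectations and showing the bad events are negligible. As it stands, the lower-bound half of your argument has a gap at this step.
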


\begin{proof}[Proof of \eqref{eq:upperBouondForIntersection}]
Recall that $\mathbb{P}(\mathcal{E}_{M,N}^c)$ decays faster than any polynomial rate by Lemma~\ref{lem: decay of kE}. We have that $\mathbb{P}(\widetilde{\mathcal{A}}_{M}(x)\cap \widetilde{\mathcal{A}}_{M}(y)\cap\mathcal{E}_{M,N}^c)$ is smaller than
\[\mathbb{P}\Big(\cap_{\ell=1}^{k-1}\{\log{\widetilde{W}_{\ell,N}(x)},\log{\widetilde{W}_{\ell,N}(y)}\geq \alpha_\ell\}\cap  \cap_{\ell = k}^{(1-\varepsilon_1)M} \{\log{\widetilde{W}_{\ell,N}(y)}\geq \alpha_\ell\}\cap \mathcal{E}_{M,N}^c \Big).
\]
By Lemma \ref{lem:combLem}, with $\widehat{q}_k$ as in \eqref{def:q_khat}, and successively conditioning on $\mathcal{F}_j$, $j=(1-\varepsilon_1)M-1,\ldots,k-1$, this is less than
\[
\mathbb{P}\left(\cap_{\ell=1}^{k-1}\{\log{\widetilde{W}_{\ell,N}(x)},\log{\widetilde{W}_{\ell,N}(y)}\geq \alpha_\ell\};\mathcal{E}_{k-1,N}^c\right)  
N^{\frac 3 M} e^{\sum_{\ell = k}^{(1-\varepsilon_1)M}(- \widehat{q}_\ell \alpha_\ell + \lambda^2_{\frac{\ell-1}{M},\frac{\ell}{M}} \widehat{q}_\ell^2/2)},
\]
where $\exp\{- \widehat{q}_\ell \alpha_\ell +  \lambda^2_{(\ell-1)/M,\ell/M} \widehat{q}_\ell^2/2 \}= e^{-\frac{(1-\delta)(1-3\delta)}{M}\log N}$.
For $\ell\leq k-1$, by independence,
\al{
\mathbb{P}\left( \log{\widetilde{W}_{\ell,N}(x)},\log{\widetilde{W}_{\ell,N}(y)}\geq \alpha_{\ell}\middle|~\kF_{\ell-1}\right)
= \mathbb{P}( \log{\widetilde{W}_{\ell,N}(x)}\geq \alpha_\ell|~\kF_{\ell-1})^{2},
}
which by Lemma \ref{lem:combLem} is less than
$N^{4/M^2}e^{-2\frac{(1-\delta)(1-3\delta)}{M}\log N}$. We get 
\[
    \mathbb{P}(\widetilde{\mathcal{A}}_{M}(x)\cap \widetilde{\mathcal{A}}_{M}(y)\cap \mathcal{E}_{M,N}^c) \leq N^{7/M}e^{-M^{-1}((1-\delta)(1-3\delta)((1-\varepsilon_1)M-k+1+2(k-1)))\log N}.
\]
We conclude by taking $M$ large and $\delta$ small.
\end{proof}
\begin{proof}[Proof of \eqref{Eq: lower bound for single}]
The proof uses Lemma \ref{lem: lower bound of moments} below, which in turn is based on 
Appendix \ref{appB} and \cite{cosco2023momentsLB}.
     We estimate for any $q_k\geq 0$ with $k\in \Iintv{1,(1-\varepsilon_1)M}$,
\aln{
&\mathbb{P}(\log \widetilde{W}_{k,N}\geq \alpha_k\quad\forall k\in \Iintv{1,(1-\varepsilon_1)M})\label{Eq: lower estimate1}\\
&\geq  {\mathbb{E}\Big[ e^{-\sum_{\ell=1}^{(1-\varepsilon_1)M} q_\ell \log \widetilde{W}_{\ell,N}} e^{\sum_{\ell=1}^{(1-\varepsilon_1)M} q_\ell \log \widetilde{W}_{\ell,N}} \mathbf{1}_{\{ \log \widetilde{W}_{k,N}\in [\alpha_k,\alpha_k' ],\forall k\in \Iintv{1,(1-\varepsilon_1)M}\}}\Big]}\notag\\
&\geq e^{-\sum_{k=1}^{(1-\varepsilon_1)M} q_k \alpha_k'} \mathbb{E}\Big[\prod_{k=1}^{(1-\varepsilon_1)M} \widetilde{W}_{k,N}^{q_k}\Big] \notag\\
& \qquad \qquad \qquad \qquad \qquad 
\times  \frac{\mathbb{E}\Big[ \prod_{k=1}^{(1-\varepsilon_1)M} \widetilde{W}_{k,N}^{q_k}\,\mathbf{1}_{\{ \log \widetilde{W}_{k,N}\in [\alpha_k,\alpha_k'],\,\forall k\in \Iintv{1,(1-\varepsilon_1)M} \}}\Big]}{\mathbb{E}\Big[\prod_{k=1}^{(1-\varepsilon_1)M} \widetilde{W}_{k,N}^{q_k}\Big]}.\notag
}
 We fix $k\in \Iintv{1,(1-\varepsilon_1)M}$ arbitrary. We have
\al{
&\mathbb{E}\Big[  e^{\sum_{\ell=1}^{(1-\varepsilon_1)M} q_\ell \log \widetilde{W}_{\ell,N}} \mathbf{1}_{ \log \widetilde{W}_{k,N}> \alpha_k' }\Big]\\
&= \mathbb{E}\Big[  e^{(1+\delta)q_k\log \widetilde{W}_{k,N}+\sum_{\ell\neq k} q_\ell \log \widetilde{W}_{\ell,N}}\, e^{-((1+\delta)q_k-q_k) \log \widetilde{W}_{k,N}} \mathbf{1}_{\log  \widetilde{W}_{k,N}> \alpha_k'}\Big]\\
&\leq  e^{-((1+\delta)q_k-q_k) \alpha_k'} \mathbb{E}\Big[  e^{(1+\delta)q_k\log \widetilde{W}_{k,N}+\sum_{\ell\neq k} q_\ell \log \widetilde{W}_{\ell,N}}\Big]\\
&\leq e^{-((1+\delta)q_k-q_k) \alpha_k'} \mathbb{E}\Big[  e^{(1+\delta)q_k\log {W}_{k,N}+\sum_{\ell\neq k} q_\ell \log {W}_{\ell,N}}\Big].
}
For any $\varepsilon'>0$, if $M$ is large enough, by Corollary~\ref{cor: VariantMomentEstimate}, then this can be further bounded from above by 
\al{
& \exp{(- \delta q_k \alpha_k')}  \\
&\times \exp{\Big(\frac{1}{2M}\Big(((1+\delta) q_k)^2  \lambda_{\tfrac{k-1}{M},\tfrac{k}{M}}^2 + 
\sum_{\ell\in\Iintv{1,(1-\varepsilon_1)M} \setminus\{k\}}q^2_\ell  \lambda_{\tfrac{k-1}{M},\tfrac{k}{M}}^2 \Big)+\varepsilon' \log N\Big)}\\
&= \exp{\Big(-\delta q_k \alpha_k' + \frac{1}{2M} (2\delta+\delta^2) q^2_k  \lambda_{\tfrac{k-1}{M},\tfrac{k}{M}}^2 + \frac{1}{2M} \sum_{\ell=1}^{(1-\varepsilon_1)M} q^2_\ell  \lambda_{\tfrac{k-1}{M},\tfrac{k}{M}}^2+\varepsilon' \log N\Big).}
}
We note that 
\al{
 & M  ( \log N)^{-1} \left( -\delta q_k \alpha_k' + (2\delta  +\delta^2) q_k^2  \lambda_{(k-1)/M,k/M}^2 /2\right)\\
 &= -2 \delta (1-\delta) + (2\delta  +\delta^2) (1-\delta)^2 =
 -  \delta^2  + \delta^4,
}
which is negative if $\delta$ is less than $1$. 

Similarly,  we estimate
\al{
&\mathbb{E}\Big[  e^{\sum_{\ell=1}^{(1-\varepsilon_1)M} q_\ell \log \widetilde{W}_{\ell,N}} \mathbf{1}_{ \log \widetilde{W}_{k,N}\leq \alpha_k}\Big]\leq e^{\delta \alpha_k q_k} \mathbb{E}\Big[  e^{\sum_{\ell\neq k} q_\ell \log \widetilde{W}_{\ell,N} + (1-\delta)q_k \log \widetilde{W}_{k,N}}\Big]\\
&\leq \exp\Big(\delta \alpha_k q_k+\sum_{\ell\neq k}q^2_\ell  \lambda_{(\ell-1)/M,\ell/M}^2 /2+((1-\delta)q_k)^2  \lambda_{(k-1)/M,k/M}^2 /2 +\varepsilon' \log N\Big)\\
& =   \exp\Big(\delta \alpha_k q_k-(q_k^2-((1-\delta)q_k)^2)  \lambda_{(k-1)/M,k/M}^2 /2 \\
&\qquad \qquad +\sum_{\ell=1}^{(1-\varepsilon_1)M} q^2_\ell  \lambda_{(\ell-1)/M,\ell/M}^2/(2M) +\varepsilon' \log N\Big).
}
We note that 
\al{
&M  (\log N)^{-1}   \left(\delta\alpha_k q_k-(q^2_k - ((1-\delta)q_k)^2) \lambda_{(k-1)/M,k/M}^2 /(2M)\right) \\
& = \delta (1-2\delta)(1-\delta) - (2\delta -\delta^2) (1-\delta)^2 = 
-\delta + 2\delta^2 - 2\delta^3 + \delta^4,
}
which is negative for $\delta<1/4$. 

Putting things together with   Lemma~\ref{lem: lower bound of moments} below, if we take $\varepsilon'$ small enough depending on $\delta$, we have
\al{
\lim_{n\to\infty} \frac{\mathbb{E}\Big[  \exp{\Big(\sum_{\ell=1}^{(1-\varepsilon_1)M} q_\ell \log \widetilde{W}_{\ell,N}\Big)} \mathbf{1}_{\{ \log \widetilde{W}_{k,N}\not\in [\alpha_k,\alpha_k'],\,\exists  k\in \Iintv{1,(1-\varepsilon_1)M} \}}\Big]}{\mathbb{E}\Big[\prod_{k=1}^{(1-\varepsilon_1)M} \widetilde{W}_{k,N}^{q_k}\Big]} =0,
}
that implies for $N$ large enough depending on $\varepsilon_1,\varepsilon',M,\delta$,  we have
$$ \frac{\mathbb{E}\Big[  \exp{\Big(\sum_{\ell=1}^{(1-\varepsilon_1)M} q_\ell \log \widetilde{W}_{\ell,N}\Big)} \mathbf{1}_{\{ \log \widetilde{W}_{k,N}\in [\alpha_k,\alpha_k'],\,\forall k\in \Iintv{1,(1-\varepsilon_1)M} \}}\Big]}{\mathbb{E}\Big[\prod_{k=1}^{(1-\varepsilon_1)M} \widetilde{W}_{k,N}^{q_k}\Big]} \geq \frac{1}2 .$$
Therefore, combined with Lemma~\ref{lem: lower bound of moments}, for any $\delta>0$ small enough depending on $\varepsilon_1,\varepsilon$, \eqref{Eq: lower estimate1} can be further bounded from below by 
\al{
 &\exp{\Big(-\sum_{k=1}^{(1-\varepsilon_1)M} q_k \alpha_k' + \sum_{k=1}^{(1-\varepsilon_1)M} q_k^2  \lambda_{(k-1)/M,k/M}^2 /2 -\varepsilon' \log N\Big)}\\
 &\geq \exp{\Big(-\frac{1}{2M}\sum_{k=1}^{(1-\varepsilon_1)M} 2 (1-\delta)^2 \log N -\varepsilon' \log N\Big)} \geq N^{-1+ \varepsilon_1-\varepsilon}.
}
\end{proof}

The following proves the lower bound.
\begin{prop}
    For any $\varepsilon_1>0$, if $M$ is large enough and $\delta$ is small enough, then we have 
    \al{
    \lim_{N\to\infty}\mathbb{P}\Big(\exists x\in \Iintv{-\sqrt{N},\sqrt{N}}^2,\,\widetilde{\mathcal{A}}_{M}(x)\text{ holds}\Big)=1.
    }
    Moreover,   we have
\al{   & \lim_{N\to\infty}\mathbb{P}\Big(\exists x\in \Iintv{-\sqrt{N},\sqrt{N}}^2,\, \\
&\qquad \qquad \log Z_{N}(x) > (1-2\delta)\sqrt 2\sum_{i=1}^{(1-\varepsilon_1)M}  \lambda_{(i-1)/M,i/M} \sqrt{\log N} -\varepsilon_1\sqrt{\log N}\Big)=1.
}    
\end{prop}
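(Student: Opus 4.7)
The plan is to run a second-moment (Paley--Zygmund) argument on the barrier count
\[
X_N := \sum_{x\in \Iintv{-\sqrt N,\sqrt N}^2} \mathbf{1}_{\widetilde{\mathcal A}_M(x)},
\]
using Lemma \ref{lem: key lemma for lower bound} for near pairs and the quasi-local structure of $\widetilde{\mathcal A}_M$ built into \eqref{eq-tildeW} for far pairs, and then to convert the existence of a successful $x$ into the desired lower bound on $\log Z_N$ via $\widetilde W_{\ell,N}\leq W_{\ell,N}$ together with the a priori lower-tail estimate of Appendix \ref{app-D}.

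From \eqref{Eq: lower bound for single} one immediately gets $\mathbb{E}[X_N]\gtrsim N^{\varepsilon_1-\varepsilon}\to\infty$ for $\varepsilon<\varepsilon_1$. For the second moment, I split $\sum_{x\neq y} \mathbb P(\widetilde{\mathcal A}_M(x)\cap \widetilde{\mathcal A}_M(y))$ into near pairs $\|x-y\|<r_{(1-\varepsilon_1)M}$, medium pairs $r_{(1-\varepsilon_1)M}\leq \|x-y\|<R^*$, and far pairs $\|x-y\|\geq R^*$, where $R^*:=C\,r_{(1-\varepsilon_1)M}\log N$ with a sufficiently large constant $C$. For near pairs I apply \eqref{eq:upperBouondForIntersection} annulus by annulus with $\|x-y\|\in[r_k,r_{k+1})$, obtaining a shell contribution of order $N^{2/M+\varepsilon_1+\varepsilon}$ and a total of order $M\,N^{2/M+\varepsilon_1+\varepsilon}$. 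For medium pairs I apply \eqref{eq:upperBouondForIntersection} with $k=(1-\varepsilon_1)M$, giving at most $N^{\varepsilon_1+1/M+\varepsilon}(\log N)^2$. For far pairs, I replace $\widetilde W_{\ell,N}(x)$ by a localized version $\widetilde W^{\mathrm{loc}}_{\ell,N}(x)$ that is measurable with respect to the environment in $\Iintv{1,t_{(1-\varepsilon_1)M}}\times \Lambda_{R^*/2}(x)$; the walk-truncation in $\widetilde Z_{s,t}$ and the endpoint truncation in \eqref{eq-tildeW}, together with the moderate-deviation estimates used in the proof of Lemma \ref{lem: convert W to tilde-W}, make this replacement introduce only a super-polynomially small error per site. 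The resulting events are then exactly independent for $\|x-y\|\geq R^*$, yielding $\Sigma_{\mathrm{far}}\leq (\mathbb{E}[X_N])^2+o(1)$. Choosing $M$ large and $\varepsilon,\delta$ small enough that $2/M+3\varepsilon<\varepsilon_1$, one checks that $\Sigma_{\mathrm{near}}$ and $\Sigma_{\mathrm{med}}$ are $o((\mathbb{E}[X_N])^2)$, hence $\mathbb{E}[X_N^2]\leq (1+o(1))(\mathbb{E}[X_N])^2$ and Paley--Zygmund yields $\mathbb{P}(X_N\geq 1)\to 1$, which is the first assertion.

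To deduce the second assertion, pick $x\in \Lambda_{\sqrt N}$ with $\widetilde{\mathcal A}_M(x)$ on the event from the previous step. Since $\widetilde Z_{s,t}\leq Z_{s,t}$ by construction and the sum in \eqref{eq-tildeW} is truncated, $\widetilde W_{\ell,N}(x)\leq W_{\ell,N}(x)$, and hence $\sum_{\ell\leq (1-\varepsilon_1)M}\log W_{\ell,N}(x)\geq \sum_{\ell\leq (1-\varepsilon_1)M}\alpha_\ell$, which matches the main term on the right-hand side of the second assertion. For the remaining indices $\ell\in \Iintv{(1-\varepsilon_1)M+1,M}$, a super-polynomial lower-tail estimate from Appendix \ref{app-D}, combined with a union bound over $x\in \Lambda_{\sqrt N}$ and $\ell$, shows that $\sum_{\ell>(1-\varepsilon_1)M}\log W_{\ell,N}(x)\geq -\varepsilon_1\sqrt{\log N}$ uniformly in $x$ with probability tending to one. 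Adding the contributions gives the required lower bound on $\log Z_N(x)=\sum_{\ell=1}^M\log W_{\ell,N}(x)$.

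The main technical obstacle is the treatment of the far-range sum $\Sigma_{\mathrm{far}}$: a direct application of \eqref{eq:upperBouondForIntersection} with $k=(1-\varepsilon_1)M$ still carries a multiplicative slack of order $N^{1/M+\varepsilon}$ compared to the independent baseline, which would only suffice to produce a positive-but-non-unit Paley--Zygmund lower bound and would be fatal to a $\lim = 1$ conclusion. Passing through the localized events $\widetilde{\mathcal A}^{\mathrm{loc}}_M(x)$ replaces this slack by an exact-independence-plus-super-polynomially-small-error statement; quantifying this quasi-locality through moderate-deviation bounds for the simple random walk, as in Lemma \ref{lem: convert W to tilde-W}, is the key technical point.
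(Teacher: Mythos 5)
Your proposal follows the same two-step strategy as the paper: a second-moment (Paley--Zygmund) argument for the count $X_N=\sum_x \mathbf{1}_{\widetilde{\mathcal A}_M(x)}$, using the pair bound \eqref{eq:upperBouondForIntersection} annulus by annulus, followed by the conversion of $\widetilde{\mathcal A}_M(x)\Rightarrow \sum_{\ell\leq(1-\varepsilon_1)M}\log W_{\ell,N}(x)\geq\sum\alpha_\ell$ (via $\widetilde W\leq W$) combined with the lower-tail bound of Appendix~\ref{app-D} to absorb the tail indices $\ell>(1-\varepsilon_1)M$. The numerics in your shell count ($\Sigma_{\mathrm{near}}\lesssim M\,N^{2/M+\varepsilon_1+\varepsilon}$ versus $\mathbb{E}[X_N]^2\gtrsim N^{2\varepsilon_1-2\varepsilon}$) match the paper's.

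The one place you deviate is the far-pair sum. The paper simply asserts that $\widetilde{\mathcal A}_M(x)$ and $\widetilde{\mathcal A}_M(y)$ are \emph{exactly} independent for $\|x-y\|>r_{(1-\varepsilon_1)M+1}$, factorizes, and compares with $\mathbb{E}[X_N]^2$; you instead introduce a localized replacement $\widetilde W^{\mathrm{loc}}_{\ell,N}$ and an intermediate ``medium'' annulus, arguing that the replacement error is super-polynomially small. Your extra caution here is actually well placed: the Gibbs factor $\mu_{t_{k-1}}(x,\cdot)=p_{t_{k-1}}(x,\cdot)Z_{t_{k-1}}(x,\cdot)/Z_{t_{k-1}}(x)$ entering \eqref{eq-tildeW} is not, strictly speaking, a local functional of the disorder (the normalizing $Z_{t_{k-1}}(x)$ sees all of $\mathbb{Z}^2$), so the paper's exact independence claim really requires the kind of localize-and-control-the-tail step you propose, which is in the same spirit as the proof of Lemma~\ref{lem: convert W to tilde-W}. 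The only other (cosmetic) difference is in the second assertion: you control the contribution from $\ell>(1-\varepsilon_1)M$ uniformly in $x$ via a union bound, whereas the paper selects an $\mathcal F_{(1-\varepsilon_1)M}$-measurable witness $x_*$ and conditions; both are fine since the lower-tail bound is super-polynomially small. Overall this is the paper's argument with a slightly more carefully executed independence step.
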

\begin{proof}
We compute
    \al{
    &\mathbb{E}\left[\left(\sum_{x\in \Iintv{-\sqrt{N},\sqrt{N}}^2} \mathbf{1}_{\widetilde{\mathcal{A}}_{M}(x)} \right)^2\right]= \sum_{x,y\in \Iintv{-\sqrt{N},\sqrt{N}}^2} \mathbb{P}(\widetilde{\mathcal{A}}_{M}(x)\cap \widetilde{\mathcal{A}}_{M}(y))\\
    &\leq    \sum_{k=0}^{(1-\varepsilon_1)M}\sum_{\substack{x,y\in\Iintv{-\sqrt{N},\sqrt{N}}^2:\\ \|x-y\|\in [r_k,r_{k+1}]}} \mathbb{P}(\widetilde{\mathcal{A}}_{M}(x)\cap \widetilde{\mathcal{A}}_{M}(y))\\
    &\qquad +  \sum_{\substack{x,y\in\Iintv{-\sqrt{N},\sqrt{N}}^2:\\ \|x-y\|>r_{(1-\varepsilon_1)M+1}}} \mathbb{P}(\widetilde{\mathcal{A}}_{M}(x)\cap \widetilde{\mathcal{A}}_{M}(y)). 
    }
    For the first term to each $k\in \Iintv{1,(1-\varepsilon_1)M}$, if $M$ is large enough and $\delta>0$ small enough, by Lemma~\ref{lem: key lemma for lower bound}, then we estimate
    \al{
    &\sum_{x\in \Iintv{-\sqrt{N},\sqrt{N}}^2}
    \;\sum_{\substack{y\in\Iintv{-\sqrt{N},\sqrt{N}}^2:\\ \|x-y\|\in [r_k,r_{k+1}]}} \mathbb{P}(\widetilde{\mathcal{A}}_{M}(x)\cap \widetilde{\mathcal{A}}_{M}(y))\\
    &\leq (2\sqrt{N}+1)^2 (2 r_{k+1}+1)^2  N^{-1+\varepsilon_1-\frac{k-1}{M} + \varepsilon}\leq N^{\varepsilon_1 + 2\varepsilon}.
    }
    On the other hand, for the second term, since $\widetilde{\mathcal{A}}_{M}(x)$ and $\widetilde{\mathcal{A}}_{M}(y)$ are independent for  $\|x-y\|>r_{(1-\varepsilon_1)M+1}$, if $M$ is large enough depending on $\varepsilon,\varepsilon_1$,  by Lemma~\ref{lem: key lemma for lower bound}, we have
    \al{
   & \sum_{x\in \Iintv{-\sqrt{N},\sqrt{N}}^2} \; \sum_{\substack{y\in\Iintv{-\sqrt{N},\sqrt{N}}^2:\\ \|x-y\|>r_{(1-\varepsilon_1)M+1}}} \mathbb{P}(\widetilde{\mathcal{A}}_{M}(x)\cap \widetilde{\mathcal{A}}_{M}(y)) \\
   &= \sum_{x\in \Iintv{-\sqrt{N},\sqrt{N}}^2} \; \sum_{\substack{y\in\Iintv{-\sqrt{N},\sqrt{N}}^2:\\ \|x-y\|>r_{(1-\varepsilon_1)M+1}}} \mathbb{P}(\widetilde{\mathcal{A}}_{M}(x))\mathbb{P}(\widetilde{\mathcal{A}}_{M}(y)) \\
   &\geq (1-N^{-\varepsilon_1/2})N^2 N^{-2+2\varepsilon_1 - 2\varepsilon}= (1-N^{-\varepsilon_1/2})N^{2\varepsilon_1 - 2\varepsilon}.
    }
    Hence, if $\varepsilon$ is small enough depending on $\varepsilon_1$, we arrive at
    \al{
    \limsup_{N\to\infty} \frac{\mathbb{E}\left[\left(\sum_{x\in \Iintv{-\sqrt{N},\sqrt{N}}^2} \mathbf{1}_{\widetilde{\mathcal{A}}_{M}(x)} \right)^2\right]}{\mathbb{E}\left[\sum_{x\in \Iintv{-\sqrt{N},\sqrt{N}}^2} \mathbf{1}_{\widetilde{\mathcal{A}}_{M}(x)} \right]^2} \leq 1.
    }
    This together with {the inequality $P(Z\geq 1) \geq \frac{E[Z]^2}{E[Z^2]}$ for any non-negative, integer-valued random variable $Z$} ends the proof of the first claim.

    Next, we consider the second claim. 
    Under the event $$\tilde{\kE}_N  :=\{\exists x\in 
    \Iintv{-\sqrt{N},\sqrt{N}}^2,\,
    \widetilde{\mathcal{A}}_{M}(x)\text{ holds}\},$$ we take  $x_*$ such that $A_N(x_*)$ holds with a deterministic rule breaking ties so that $x_*$ is measurable with respect to $$\kF_{(1-\varepsilon_1)M} := \sigma(\omega(k,x)\mid~x\in \mathbb{Z}^2,\,k\leq (1-\varepsilon_1)M).$$
    We have
    \begin{align}
    &\mathbb{P}\Big(\exists x\in \Iintv{-\sqrt{N},\sqrt{N}}^2,\nonumber\\
    &\qquad \log Z_{N}(x) > (1-2\delta)\sum_{i=1}^{(1-\varepsilon_1)M}  \lambda_{(i-1)/M,i/M} \sqrt{\log N} -\varepsilon_1\sqrt{\log N}\Big)\nonumber\\
    &\geq \mathbb{E}[\mathbf{1}_{\tilde{\kE}_N\cap \{\sum_{i=(1-\varepsilon_1)M+1}^M  \log W_{i,N}(x_*) > - \varepsilon_1\sqrt{\log N}\}}]\label{eq-151024}\\
    &=  \mathbb{P}(\tilde{\kE}_N)-\mathbb{E}\Big[\mathbf{1}_{\tilde{\kE}_N}\mathbb{P}\Big(\sum_{i=(1-\varepsilon_1)M+1}^M  \log W_{i,N}(x_*) \leq - \varepsilon_1\sqrt{\log N}\Big| ~\Big.\mathcal{F}_{(1-\varepsilon_1)M}\Big)\Big],\nonumber
    \end{align}
    where the first term on the right-hand side converges to $1$ by the first claim and the second term converges to $0$ by Corollary~\ref{eq: lower tail2} since $\sum_{i=(1-\varepsilon_1)M+1}^M  \log W_{i,N}(x_*) = \log{(Z_N(x_{\star})/Z_{(1-\varepsilon_1)M}(x_{\star}))}$. 
\end{proof}
\subsection{Lower bound on joint moments}
\label{subsec-LBJM}
\begin{lem}\label{lem: lower bound of moments} For any $q_k$ satisfying \eqref{eq:conditionqk}, for any $\varepsilon>0$ and $m\in \Iintv{1,M}$, if $N$ is large enough, then we have
\begin{equation}
\mathbb{E}\left[\prod_{k=1}^m \widetilde{W}_{k,N}^{q_k}\right] \geq e^{(1- \varepsilon)  \sum_{k=1}^m \binom{q_k}{2} \lambda^2_{\frac{k-1}{M},\frac{k}{M}}}.
\end{equation}
\end{lem}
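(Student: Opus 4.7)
My plan is to establish the lower bound by a backward induction on $k$, peeling off one factor $\widetilde W_{k,N}^{q_k}$ at a time. At each step I would condition on $\kF_{t_{k-1}}$ and aim to show that, on a high-probability event $\kG_{k-1}\in \kF_{t_{k-1}}$,
\[
\mathbb{E}\bigl[\widetilde W_{k,N}^{q_k}\,\bigm|\,\kF_{t_{k-1}}\bigr] \;\geq\; (1-\varepsilon/m)\,\exp\!\Bigl(\tbinom{q_k}{2}\lambda^2_{(k-1)/M,k/M}\Bigr).
\]
Since $\theta_{t_{k-1}}\widetilde Z_{t_k-t_{k-1}}(\cdot)$ is independent of $\kF_{t_{k-1}}$, expanding the $q_k$-th power and using the moment formula \eqref{eq:momentFormula} would yield
\[
\mathbb{E}[\widetilde W_{k,N}^{q_k}\mid \kF_{t_{k-1}}] \;=\!\!\!\!\sum_{y_1,\ldots,y_{q_k}}\! \prod_{j} \mu_{t_{k-1}}(0,y_j)\mathbf{1}_{\{\|y_j\|\leq r_{k-1}\log N\}}\, \rme^{\otimes q_k}_{(y_j)}\!\left[e^{\beta_N^2 \psi_{t_k-t_{k-1},q_k}} \mathbf{1}_{\mathrm{trunc}}\right].
\]

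For the lower bound I would restrict the sum to $(y_j)$ inside a diffusive ball $B_k$ of radius $\asymp \sqrt{t_{k-1}}$ around the origin. The contribution would then factor as $\mu_{t_{k-1}}(0,B_k)^{q_k}$ times the infimum over $(y_j)\in B_k^{q_k}$ of the random-walk exponential moment. By diffusivity of the subcritical polymer endpoint distribution (which I would extract from Appendix~\ref{appB}), $\mu_{t_{k-1}}(0,B_k)$ should be close to $1$ on a high-probability event, which would handle the polymer-measure factor.

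The key analytic input is the uniform lower bound
\[
\inf_{(y_j)\in B_k^{q_k}} \rme^{\otimes q_k}_{(y_j)}\!\left[e^{\beta_N^2 \psi_{t_k-t_{k-1},q_k}}\right] \;\geq\; (1-\varepsilon/m)\exp\!\Bigl(\tbinom{q_k}{2}\lambda^2_{(k-1)/M,k/M}\Bigr).
\]
This would extend the single-origin moment lower bound of \cite{cosco2023momentsLB} to the time window $(t_{k-1},t_k]$ --- where the effective variance parameter becomes $\lambda^2_{(k-1)/M,k/M}$ --- and to walks with diffusively close starting points. The latter extension should be harmless because two walks started within $B_k$ intersect during $(t_{k-1},t_k]$ with probability arbitrarily close to that of two walks started at a common origin, so essentially no intersection local time is lost.

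The main obstacle will be the uniform-in-starting-position version of the lower moment bound, especially when $q_k$ is near the admissibility threshold \eqref{eq:conditionqk}. I would expect Appendix~\ref{appB} to supply (or be readily adaptable to) this extension, supplemented by a short coupling argument that relocates the starting points to a common origin at a $1+o(1)$ multiplicative cost. A secondary check is that the truncation indicator $\mathbf{1}_{\mathrm{trunc}}$ in $\widetilde Z$ removes only events of super-polynomially small probability and is therefore essentially free in a lower bound. With these pieces in hand, iterating the conditional bound over $k=m,m-1,\ldots,1$ would close the induction.
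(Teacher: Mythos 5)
Your proposal follows the same skeleton as the paper's proof: condition on $\kF_{t_{k-1}}$, use the Markov property and the moment formula \eqref{eq:momentFormula} to reduce to a random-walk intersection moment, restrict the polymer endpoint to a ball $B_k$, and lower-bound the resulting infimum $\Gamma_k$ via the adaptation of \cite{cosco2023momentsLB} in Appendix~\ref{appB}. The paper even uses the same quantity $\mu_{t_{k-1}}^{\otimes q_k}(B_k^{\otimes q_k})\Gamma_k$ that you arrive at. So at the level of ideas this is the paper's argument.

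That said, a few details in your sketch need tightening, and they reflect real choices the paper makes. First, the radius of $B_k$: you propose radius $\asymp\sqrt{t_{k-1}}$, but since $q_k\sim\sqrt{\log N}\to\infty$, a radius on the diffusive scale would make $\mu_{t_{k-1}}(0,B_k)^{q_k}$ stay bounded away from $1$ rather than tend to $1$; the paper takes radius $r_{k-1}\log N$, which gives a super-polynomially small complementary mass and makes the $q_k$-th power harmless. Second, the source of the polymer-mass concentration is not Appendix~\ref{appB} (which is the lower bound on $\Gamma_k$); the paper instead combines Cauchy--Schwarz with Theorem~\ref{eq: lower tail} (lower tail on $Z_t$), Theorem~\ref{thm:qmomentupperBound}, and moderate deviations for the SRW. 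Third, the way you plan to iterate over $k$ with good events $\kG_{k-1}$ leaves an indicator inside the remaining expectation, and discarding it costs you an a priori \emph{upper} bound on $\mathbb{E}[\prod_{k<m}W_{k,N}^{q_k p_0}]$ — this is exactly where the paper invokes Corollary~\ref{cor: VariantMomentEstimate} and H\"older. Finally, the paper proves the bound for $W_{k,N}$ (no truncation) and then transfers to $\widetilde W_{k,N}$ in one perturbative step at the end (using Lemma~\ref{lem: convert W to tilde-W} and the elementary inequality $y^{q}-x^{q}\le q\eta^{-1}(y^\eta-x^\eta)y^{q-\eta}$), which is cleaner than handling the truncation indicator at every step of the induction — you acknowledge this step but it is a genuine estimate, not "essentially free." None of these are conceptual obstacles, but as written your sketch overstates how automatic the error control is.
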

\begin{proof}
We first prove the claim with ${W}_{k,N}$  in place of $\widetilde{W}_{k,N}$. 
For $k\in \Iintv{1,M}$, let $\mu_k^{\otimes q_k}$ be the $q_k$-product of polymer measures of time horizon $t_{k-1}$. 
Assume $\varepsilon\in (0,1)$, and let $B_k$ be the ball of $\mathbb Z^2$ of radius $r_{k-1}\log N$ and $B_k^{\otimes q_k}$ be the $q_k$-times euclidean product of $B_k$. For $k=0$, define $B_0 := \{0\}$ and $\mu_0:=\delta_0$. We define
$$I_{s,t}^{\otimes q} := \exp\{\beta_N^2\sum_{n=s}^t \sum_{(i,j)\in \mathcal C_q} \mathbf{1}_{S_n^i=S_n^j}\},\, I_t^{\otimes q}:= I_{1,t}^{\otimes q},$$ and
\begin{equation}
    \label{eq-100225a}
\Gamma_k := \inf_{\mathbf{x}\in B_k^{\otimes q_k}} \rme_{\mathbf x}^{\otimes q_k} \left[I_{t_k-t_{k-1}}^{\otimes q_k} \right].
    \end{equation}
 We first prove the following:
 \begin{equation} \label{eq:lowerBoundConditioning}
 \mathbb{E}\left[\prod_{k=1}^M ({W}_{k,N})^{q_k}\right] \geq \mathbb{E}\left[\prod_{k=1}^M \mu_{k}^{\otimes q_k}(B_k^{\otimes q_k})\right] \prod_{k=1}^M \Gamma_k. 
 \end{equation}
By Markov’s property,  we have 
\begin{equation} \label{eq:conditionalExpFormula}
    \mathbb{E}\left[({W}_{k,N})^{q_k} \middle| \mathcal F_{k-1} \right]  = \sum_{\mathbf{x}\in (\mathbb Z^2)^{q_k}}\mu_{t_{k-1}}^{\otimes q_k}(\mathbf x) \rme_{\mathbf x}^{\otimes q_k} \left[I_{t_k-t_{k-1}}^{\otimes q_k}\right]\geq \mu_{t_k}^{\otimes q_k}(B_k^{\otimes q_k}) \Gamma_k.
\end{equation}
Therefore, \eqref{eq:lowerBoundConditioning} follows by conditioning successively on $\mathcal F_{M-1}, \dots, \mathcal F_{1}$.
Hence,
with $\neg A$ denoting the complementary set of $A$, 
\al{
&\mathbb{E}\left[\prod_{k=1}^m ({W}_{k,N})^{q_k}\right] \geq \Gamma_m \mathbb{E}\left[\left(\prod_{k=1}^{m-1} ({W}_{k,N})^{q_k}\right) \mu_{t_{m-1}}^{\otimes q_m}(B_m^{\otimes q_m})\right]\\
&= \Gamma_m \left(\mathbb{E}\left[\prod_{k=1}^{m-1} ({W}_{k,N})^{q_k}\right] - \mathbb{E}\left[\left(\prod_{k=1}^{m-1} ({W}_{k,N})^{q_k}\right)\mu_{t_{m-1}}^{\otimes q_m}(\neg B_m^{\otimes q_m})\right] \right).
}
Now, observe that by the union bound and exchangeability of the particles, 
$$\mu_{t_{m-1}}^{\otimes q_m}(\neg B_m^{\otimes q_m}) \leq q_m \mu_{t_{m-1}}(\neg B_m), \quad  \mbox{\rm a.s.},$$ so that by H\"older’s inequality (set $p_0^{-1} + p_1^{-1} = 1$), we obtain that
\begin{eqnarray}\label{eq:Gammammutm}&&\Gamma_m \mathbb{E}\left[\left(\prod_{k=1}^{m-1} ({W}_{k,N})^{q_k}\right)\mu_{t_{m-1}}^{\otimes q_m}(\neg B_m^{\otimes q_m})\right] \nonumber \\
&\leq &\Gamma_m q_m \mathbb{E}\left[\prod_{k=1}^{m-1} {W}_{k,N}^{q_k p_0}\right]^{1/p_0} \mathbb{E}\left[\mu_{t_{m-1}}(\neg B_m)^{p_1}\right]^{1/p_1}.
\end{eqnarray}
Since $\mu_t(A) = Z_t^{-1}\rme[e_t \mathbf{1}_A]$, the Cauchy-Schwarz inequality entails 
\begin{align*}
\mathbb{E}\left[\mu_{t_{m-1}}(\neg B_m)^{p_1}\right] \leq \mathbb{E}\left[Z_t^{-2p_1}\right]^{1/2} \rme^{\otimes 2p_1}\left[I_{t_{m-1}}^{\otimes 2p_1} \mathbf{1}_{S^1_{t_{m-1}} \notin B_m,\dots,S^{2p_1}_{t_{m-1}}\notin B_m} \right]^{1/2},
\end{align*}
so using again H\"older's inequality with $a^{-1}+b^{-1}=1$,
\begin{align*}
\mathbb{E}\left[\mu_{t_{m-1}}(\neg B_m)^{p_1}\right]&\leq \mathbb{E}\left[Z_t^{-2p_1}\right]^{1/2}  \rme^{\otimes 2p_1}\left[I_{t_m}^{\otimes 2p_1}(\sqrt a \hat \beta)\right]^{1/(2a)} p_{t_{m-1}}(\neg B_m)^{1/(2b)} \\
&\leq C e^{- (\log N)^2/C},
\end{align*}
where the last inequality holds by Theorem~\ref{eq: lower tail}, Theorem~\ref{thm:qmomentupperBound} and usual moderate deviation estimates of the simple random walk \cite[Theorem~2.3.11]{LL10}. Coming back to \eqref{eq:Gammammutm}, first observe that $\Gamma_m q_m \leq N^{C}$ by Theorem~\ref{thm:qmomentupperBound}. Then, by Corollary~\ref{cor: VariantMomentEstimate}, with $q_k'=q_k p_0$ and $p_0$ close enough to $1$,
$\mathbb{E}\left[\prod_{k=1}^{m-1} {W}_{k,N}^{q_k'}\right] 
\leq N^{C_1},$ 
for some $C_1=C_1(\hat \beta,\delta,M)$. 
This results in the following bound:
\begin{equation} \label{eq:boundJointMoments}
    \Gamma_m \mathbb{E}\left[\left(\prod_{k=1}^{m-1} ({W}_{k,N})^{q_k}\right)\mu_{t_{m-1}}^{\otimes q_m}(\neg B_m^{\otimes q_m})\right] \leq C N^{C_2} e^{- (\log N)^2/C}.
\end{equation}
Since the quantity in the right-hand side of \eqref{eq:boundJointMoments} vanishes, by repeating the same argument, we obtain
\begin{equation} \label{eq:lowerBoundGammak}
    \mathbb{E}\left[\prod_{k=1}^m ({W}_{k,N})^{q_k}\right] \geq \prod_{k=1}^M \Gamma_k - o_{N}(1).
\end{equation}
Finally, we use \cite{cosco2023momentsLB} to lower bound $\Gamma_k$ as in \eqref{eq:lowerBoundGammar} and use the fact that $\varepsilon>0$ is arbitrary.

Next, we prove the claim with $\widetilde{W}$. Let ${\eta}\in (0,\delta)$ with $1/{\eta}\in\N$. Note that for $0\leq x\leq y$,
$$y^{q_\ell} - x^{q_\ell} = (y^\eta)^{q_\ell/\eta} - (x^\eta)^{q_\ell/\eta} \leq q_{\ell} \eta^{-1}(y^\eta-x^\eta)y^{q_\ell-\eta}.$$ By H\"older's inequality, for $\eta>0$ small enough, we have
\al{
&\mathbb{E}\left[\prod_{k=1}^m ({W}_{k,N})^{q_k}\right]-\mathbb{E}\left[\prod_{k=1}^m \widetilde{W}_{k,N}^{q_k}\right]\\
&\leq \sum_{\ell=1}^m  \mathbb{E}\left[\left(\prod_{k\in \Iintv{1,m}\setminus \{\ell\}} ({W}_{k,N})^{q_k}\right)(({W}_{\ell,N})^{q_\ell}-\widetilde{W}_{\ell,N}^{q_\ell})\right]\\
&\leq \sum_{\ell=1}^m  {\eta}^{-1} q_\ell \mathbb{E}\left[\left(\prod_{k\in \Iintv{1,m}} {W}_{k,N}^{q_k-\eta\mathbf{1}_{k=\ell}}\right)({W}_{\ell,N}^{\eta}-\widetilde{W}_{\ell,N}^{\eta})\right]\\
&\leq \sum_{\ell=1}^m  \eta^{-1} q_\ell \mathbb{E}\left[\prod_{k\in \Iintv{1,m}} {W}_{k,N}^{(1-{\eta})^{-1}(q_k-\eta\mathbf{1}_{k=\ell})}\right]^{1-\eta}\mathbb{E}\left[{W}_{\ell,N}-\widetilde{W}_{\ell,N}\right]^{\eta}\leq C e^{- \tfrac{(\log N)^2}{C}},
}
with some $C=C(\hat{\beta},\eta,\delta,M)$, where we have used $|x^{\eta}-y^{\eta}|^{1/\eta}\leq |x-y|$ for $x,y\geq 0$, Lemma~\ref{lem: convert W to tilde-W} and Corollary~\ref{cor: VariantMomentEstimate} with $q_\ell' = (1-{\eta})^{-1}(q_k-\eta\mathbf{1}_{k=\ell})$ in place of $q_\ell$ in the last line.
\end{proof}
\appendix
\section{Moments for point-to-point partition functions}
\label{app-A}
The goal of this appendix is to prove the following theorem.
\begin{thm}\label{thm: ptop moment}
Suppose that $\hat{\beta}<1$. For any $p\in\N$, there exists $C=C(\hat{\beta},p)>0$ such that for any $N\in \N$ and $\mathbf{X},\mathbf{Y}\in (\mathbb{Z}^2)^p$,
\al{
    {\rm E}_{\mathbf{X}}
 \Big(\exp\Big(\beta_N^2 \sum_{n=1}^N \sum_{(i,j)\in \mathcal{C}_p}{\bf 1}_{S_n^i=S_n^j}\Big) {\bf 1}_{\{(S_N^i)_{i\in \Iintv{1,p}}=\mathbf{Y}\}}\Big) \leq C N^{-p}. 
 }
 In particular, there exists $C=C(\hat{\beta},p)>0$ such that 
 \al{
\sup_{N\in\N} \sup_{x,y\in \mathbb{Z}^2} \mathbb{E}[(Z_N(x,y) p_N(x,y))^p]\leq C N^{-p}.
 }
\end{thm}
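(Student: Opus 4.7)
The second claim follows immediately from the first by specializing to $\mathbf{X} = (x,\ldots,x)$ and $\mathbf{Y} = (y,\ldots,y)$: Fubini together with the Gaussian moment-generating function identity
\[
\mathbb E\!\left[e^{\beta_N \sum_{i=1}^p \sum_{n=1}^N \omega(n, S^i_n)}\right] = \exp\!\Big(\tfrac{\beta_N^2}{2}\sum_{n=1}^N \big|\{i : S^i_n = \cdot\}\big|^2\Big)
\]
shows that $\mathbb E[(Z_N(x,y) p_N(x,y))^p]$ equals the LHS of the first claim in this case, the diagonal $i=j$ self-interaction terms cancelling exactly against the normalization $-pN\beta_N^2/2$.

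For the first claim, the plan is to factor out the endpoint constraint through the bridge identity
\[
{\rm E}^{\otimes p}_{\mathbf X}\!\left[F(\mathbf S)\,\mathbf 1_{\mathbf S_N = \mathbf Y}\right] = \left(\prod_{i=1}^p p_N(x_i, y_i)\right) {\rm E}^{{\rm br},\otimes p}_{\mathbf X \to \mathbf Y}[F(\mathbf S)],
\]
where ${\rm E}^{{\rm br},\otimes p}_{\mathbf X \to \mathbf Y}$ denotes expectation under $p$ independent simple random walk bridges from $x_i$ to $y_i$ of length $N$. The 2D local CLT \cite[Theorem~2.3.11]{LL10} yields $p_N(x,y) \leq C/N$ uniformly, so $\prod_i p_N(x_i,y_i) \leq C N^{-p}$. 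It then suffices to establish the uniform bridge bound
\begin{equation} \label{eq:plan-bridge}
\sup_{N,\mathbf X,\mathbf Y}\ {\rm E}^{{\rm br},\otimes p}_{\mathbf X \to \mathbf Y}\!\left[\exp\!\Big(\beta_N^2\sum_{n=1}^N \sum_{(i,j) \in \mathcal C_p} \mathbf 1_{S^i_n = S^j_n}\Big)\right] \leq C(\hat\beta, p).
\end{equation}

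To prove \eqref{eq:plan-bridge}, I would Taylor-expand the exponential and control each moment
\[
{\rm E}^{{\rm br}, \otimes p}_{\mathbf X \to \mathbf Y}\!\left[\Big(\sum_{n} \sum_{(i,j)} \mathbf 1_{S^i_n = S^j_n}\Big)^k\right] = \sum_{n_1,\ldots,n_k}\sum_{(i_1, j_1),\ldots,(i_k,j_k)} {\rm P}^{{\rm br}, \otimes p}_{\mathbf X \to \mathbf Y}\!\left(\forall \ell,\ S^{i_\ell}_{n_\ell} = S^{j_\ell}_{n_\ell}\right),
\]
by expressing each bridge probability via Chapman--Kolmogorov as a ratio of heat-kernel convolutions, and invoking the LCLT to bound this ratio pointwise by a constant times the corresponding free-walk intersection probability on diffusive scales. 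Summing over spatial positions at each intersection time yields a bound of order $C^k\, k!\, \bigl(\binom{p}{2} R_N\bigr)^k$, so the Taylor series of \eqref{eq:plan-bridge} is dominated by the geometric sum $\sum_k \bigl(C\hat\beta^2\binom{p}{2}\bigr)^k$, convergent for $\hat\beta<1$ uniformly in $N, \mathbf X, \mathbf Y$, since $\beta_N^2 R_N = \hat\beta^2$.

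The main obstacle I anticipate is the uniformity in $\mathbf X, \mathbf Y$ of the bridge-to-free LCLT comparison: the ratio $p_n(x,z)p_{N-n}(z,y)/p_N(x,y)$ is bounded pointwise by an $O(1)$ constant only on diffusive scales, so a truncation argument with Gaussian tail estimates is required to absorb atypical configurations (such as $\|x_i - y_i\| \gg \sqrt N$), whose contribution is however subexponentially small and already dominated by the smallness of the prefactor $p_N(x_i,y_i)$ itself. Once \eqref{eq:plan-bridge} is established, combining it with $\prod_i p_N(x_i, y_i) \leq CN^{-p}$ gives the claimed bound, and the corollary follows by specialization.
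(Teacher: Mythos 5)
Your reduction of the second claim to the first via the specialization $\mathbf X=(x,\ldots,x)$, $\mathbf Y=(y,\ldots,y)$ is correct, and the bridge factorization through $\prod_i p_N(x_i,y_i)\leq CN^{-p}$ is a reasonable opening move. But there is a genuine gap in the final step: the geometric series $\sum_k\bigl(C\hat\beta^2\binom{p}{2}\bigr)^k$ converges only when $C\hat\beta^2\binom{p}{2}<1$, which --- even taking $C=1$ --- forces $\hat\beta<\sqrt{2/(p(p-1))}$ and therefore fails for every $p\geq 3$ once $\hat\beta$ is close to $1$. The theorem claims the bound for \emph{all} $\hat\beta<1$ and every $p\in\mathbb N$. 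The moment bound $C^k\,k!\,\bigl(\binom{p}{2}R_N\bigr)^k$ you propose is genuinely too crude for this: configurations where several distinct pairs interact at comparable times are of higher codimension and are subdominant, but a naive sum over $\binom{p}{2}^k$ pair-assignments cannot see this. Already the free-endpoint statement $\sup_{N}{\rm E}^{\otimes p}_0\bigl[\exp(\beta_N^2\sum_n\sum_{(i,j)\in\mathcal C_p}\mathbf 1_{S^i_n=S^j_n})\bigr]<\infty$ for all $\hat\beta<1$ and fixed $p$ requires a nontrivial renewal/decoupling analysis; the paper imports it as Theorem~\ref{thm:qmomentupperBound} (from \cite{CN24}), and no termwise Taylor expansion can reproduce it.

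Given that free-endpoint bound, the paper also dispenses with your bridge-to-free LCLT comparison, which would in any case only be available on diffusive scales with non-sharp constants that would feed back into the same divergence. It passes to the torus $\mathbb T_N$, proves $\sup_N\mathbb E[(W_N^{\rm per})^p]<\infty$ via an escape-time renewal argument (Lemma~\ref{lem: p moment periodic}), and then analyzes the transfer matrix $Q(\mathbf x,\mathbf y)=p(\mathbf x,\mathbf y)e^{\beta_N^2V(\mathbf y)}$: the top eigenvalue satisfies $1\leq\lambda_N$, $\lambda_N^N=O(1)$, and the Perron eigenvector $\phi$ has uniformly bounded ratios; writing $A_{\mathbf X,\mathbf Y;N}=\lambda_N^N\,(\phi(\mathbf Y)/\phi(\mathbf X))\,q^N(\mathbf X,\mathbf Y)$ with $q$ the reversible Doob $h$-transform of $Q$, the $N^{-p}$ decay then follows from a local CLT for $q$. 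If you wish to pursue the bridge route, you would first need a uniform bound on ${\rm E}^{\mathrm{br},\otimes p}_{\mathbf X\to\mathbf Y}\bigl[\exp(\beta_N^2\sum\mathbf 1)\bigr]$ valid for all $\hat\beta<1$, which is at least as hard as the free-endpoint version and requires the same decoupling machinery rather than a Taylor series argument.
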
We fix $K\in \N$. Assume $N\in\N$. Let us denote by ${\rm P}^{\rm per}_x$ and ${\rm E}_x^{\rm per}$  the probability measure and its expectation for the simple random walk on $\mathbb{T}_N : = [-2\lfloor K\sqrt{N}\rfloor+1 ,2\lfloor K\sqrt{N}\rfloor+1)^2 \cap \mathbb{Z}^2$ starting at $x\in \mathbb T_N$ with periodic condition. Denote by $(S_n)_{n\geq 0} = ({S}^i_n)_{i\in \Iintv{1,p},\,n\geq 0}$ the $p$ simple random walks  on the torus $\mathbb{T}_N$.
 We define
\[I^{\otimes p}_k = e^{\beta_N^2 \sum^k_{n=1}V(S_n)},\,V(S_n)=\sum_{(i,j)\in \mathcal{C}_p}{\bf 1}_{S_n^i=S_n^j}.\]
Given   $\mathbf{X}=(x^i)_{i\in \Iintv{1,p}}\in (\mathbb{T}_N)^p,$ we apply $$\mathbb{E}[\prod_{i=1}^p Z_i]\leq \prod_{i=1}^p \mathbb{E}[|Z_i|^p]^{1/p}$$ with $Z_i:= {\rm E}^{\rm per}_{x^i}[e^{\beta_N\sum_{n=1}^{N}(\omega(n,S_n)-\beta_N^2/2)}]$ to get
\aln{\label{eq: holder points application}
{\rm E}^{{\rm per},\otimes p}_{\mathbf{X}} [I^{\otimes p}_N ] &= \mathbb{E}\Big[ \prod_{i=1}^p {\rm E}^{\rm per}_{x^i}\Big[e^{\beta_N\sum_{n=1}^{N}(\omega(n,S_n)-\beta_N^2/2)}\Big]\Big]\notag\\
&\leq \prod_{i=1}^p \E  \Big[{\rm E}^{\rm per}_{x_i}\Big[e^{\beta_N\sum_{n=1}^{N}(\omega(n,S_n)-\beta_N^2/2)}\Big]^p\Big]^{1/p}={\rm E}^{{\rm per},\otimes p}_0 [I^{\otimes p}_N ].
} 
Our first claim is the following.
\begin{lem}\label{lem: p moment periodic}
    Let $W_N^{\rm per}(x) := {\rm E}_x^{\rm per}[e^{\sum_{i=1}^N \{\beta_N\omega(i,S_i)-\beta_N^2/2\}}]$ and $W_N^{\rm per}:=W_N^{\rm per}(0)$. For any $p\in \N$, there exists $K=K(\hat{\beta},p)\in\N$ such that
\aln{
\sup_{N\in \N} \mathbb{E}[(W_N^{\rm per})^p]<\infty.
}   
\end{lem}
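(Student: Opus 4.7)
The Gaussian computation applied to the environment reduces $\mathbb{E}[(W_N^{\mathrm{per}})^p]$ to the $p$-replica exponential moment $\mathrm{E}_0^{\mathrm{per},\otimes p}[\exp(\beta_N^2 \sum_{n=1}^N V(S_n))]$, and the H\"older argument already set up in \eqref{eq: holder points application} (which relies on the translation invariance of the torus) shows that it suffices to consider all $p$ walks starting at the origin. The plan is then to compare this torus expectation to its $\mathbb{Z}^2$ counterpart, which is already known to be uniformly bounded in $N$ in the subcritical regime by the moment estimates of \cite{CN24} (Theorem~\ref{thm:qmomentupperBound}).

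The comparison would rest on the local CLT \cite[Theorem~2.3.11]{LL10}: for $K$ large and $n \leq N$, the torus transition density differs from the $\mathbb{Z}^2$ one by a multiplicative factor $1+o_K(1)$ on the bulk, plus a super-polynomially small tail coming from the wrap-around images at distance $K\sqrt N$. This implies, by summing over $n \leq N$, the uniform bound $\sup_{x,y\in \mathbb{T}_N} \sum_{n=1}^N \mathrm{P}_0^{\mathrm{per}}(S_n = y-x) \leq (1+o_K(1)) R_N$; and more generally the joint coincidence density of the $p$ walks on the torus is bounded pointwise by the $\mathbb{Z}^2$ density, up to a factor $(1+o_K(1))$ per coincidence. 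Expanding $e^{\beta_N^2 \sum V}$ as a power series in $\beta_N^2$ and applying this pointwise comparison term by term yields $\mathrm{E}_0^{\mathrm{per},\otimes p}[e^{\beta_N^2 \sum V}] \leq (1+o_K(1))^{C(p)} \mathrm{E}_0^{\mathbb{Z}^2,\otimes p}[e^{\beta_N^2 \sum V}]$, and the right-hand side is exactly $\mathbb{E}[Z_N^p]$, uniformly bounded in $N$ by Theorem~\ref{thm:qmomentupperBound}.

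For the case $p=2$ this program is particularly clean: since $\beta_N^2 \sup_{x,y} \mathrm{E}^{\mathrm{per},\otimes 2}_{x,y}[L_N^{12}]=\hat\beta^2 (1+o_K(1))<1$ for $K$ large, Khas'minskii's lemma gives the bound directly, without any power-series expansion. The main obstacle will be for $p\geq 3$, where the total number of coincidence events is not controlled by a single Khas'minskii application (one cannot make $\hat\beta^2 \binom{p}{2}$ less than $1$ for general $\hat\beta<1$). The comparison-to-$\mathbb{Z}^2$ approach avoids this obstruction by offloading the subcritical control to Theorem~\ref{thm:qmomentupperBound}, but one still has to verify that the accumulated $(1+o_K(1))$ errors in the termwise comparison can be absorbed uniformly across all orders of the exponential. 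The freedom to choose $K=K(\hat\beta,p)$ arbitrarily large as a function of $p$ is exactly what makes this possible: fixing $p$ first and then sending $K\to\infty$ collapses the comparison error to zero at every fixed order, while the $\mathbb{Z}^2$ side remains bounded uniformly in $N$.
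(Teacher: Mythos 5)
Your plan takes a genuinely different route from the paper. The paper's proof never compares chaos expansions at all: it introduces the stopping time $\tau$ at which a particle first exits $[-K\sqrt N,K\sqrt N]^2$, observes that on $\{\tau>N\}$ the torus and $\mathbb{Z}^2$ walks are literally the same (so the $\mathbb{Z}^2$ moment bound applies directly), and on $\{\tau\le N\}$ applies the strong Markov property together with the shift-invariance already recorded in \eqref{eq: holder points application} to get the self-consistent inequality $H_p\le C + H_p\cdot(\text{small})$, where the "small" factor is controlled by a H\"older split between the $(1+\epsilon)$-moment of $I_N^{\otimes p}$ and the Gaussian tail of the exit probability, and $K$ large makes the factor $<1/2$. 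No expansion is ever opened.

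Your proposal, by contrast, has a concrete gap. The statement ``$\mathrm{E}_0^{\mathrm{per},\otimes p}[e^{\beta_N^2 \sum V}] \leq (1+o_K(1))^{C(p)} \mathrm{E}_0^{\mathbb{Z}^2,\otimes p}[e^{\beta_N^2 \sum V}]$'' with a \emph{fixed} exponent $C(p)$ cannot be produced by a termwise comparison: a term with $m$ pair-coincidences carries an error $(1+o_K(1))^{\Theta(m)}$, and $m$ is unbounded in the expansion. Your proposed fix --- ``fixing $p$ first and sending $K\to\infty$ collapses the comparison error to zero at every fixed order'' --- is termwise convergence; it does not produce a uniform-in-order bound, which is what is actually needed when re-summing. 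What \emph{would} rescue the argument is to absorb the per-coincidence error into a slight enlargement of $\hat\beta$, using the room afforded by $\hat\beta<1$, but that is a different statement than what you wrote and you did not articulate it. A second issue is the framing of the comparison itself: $p^{\rm per}_n$ is a sum of $\mathbb{Z}^2$ image kernels and hence \emph{exceeds} $p^{\mathbb{Z}^2}_n$, and equivalently the torus coincidence count $\psi^{\rm per}_N$ dominates $\psi_N$ pathwise (it picks up extra ``wrap-around'' coincidences). So the inequality you want goes in the opposite direction of a naive pointwise bound on kernels; the correct route is to bound the exponential moment of the excess $\psi^{\rm per}_N-\psi_N$ (whose expectation is $O(e^{-cK^2})$ uniformly in $N$) via H\"older, which is more work than a LCLT comparison. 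Your $p=2$ Khas'minskii observation is correct and clean; it simply does not extend to $p\ge 3$ in the way you suggest, and the paper's stopping-time argument sidesteps the whole issue.
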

\begin{proof}
We set $H_p:= \E (W_N^{\rm per})^p $. Let $\tau$ be the first time when one of the particles leaves $[-K\sqrt{N}, K\sqrt{N}]^2$, i.e.,
$$\tau := \min_{1\le i\le p}\,\tau_i,\quad\text{with}\quad \tau_i:=\inf\{n\ge0:\, S^i_n\notin[-K\sqrt{N},K\sqrt{N}]^2\}.$$  Then, by the strong Markov property and $\sup_{N\in \N}{\rm E}^{\otimes p}_0 [ I^{\otimes p}_N]<\infty$, we have 
\aln{\label{eq: Hp estimate}
H_p={\rm E}^{\rm per}_0 [W_N^p]& \leq {\rm E}^{\rm per,\otimes p}_0 [ I^{\otimes p}_N\mathbf{1}_{\tau>N}] \notag\\
&\quad + \sum_{j=0}^{\lfloor \log_2 N\rfloor } {\rm E}^{\rm per,\otimes p}_0 \left[ I^{\otimes p}_\tau \mathbf{1}_{\tau  \in [2^{-j-1} N, 2^{-j} N]} {\rm E}^{\rm per,\otimes p}_{S_\tau}\left[I^{\otimes p}_{N-\tau}\right] \right]\notag\\
& \leq {\rm E}^{\otimes p}_0 [ I^{\otimes p}_N] + \sum_{j=0}^{\lfloor \log_2 N\rfloor } {\rm E}_0^{\otimes p} [I^{\otimes p}_{\tau}  \mathbf{1}_{\tau  \in [2^{-j-1} N, 2^{-j} N]}] H_p\notag\\
&\leq  C + H_p \sum_{j=0}^{\lfloor \log_2 N\rfloor } {\rm E}_0^{\otimes p} [I^{\otimes p}_{2^{-j}N}  \mathbf{1}_{\tau  \leq 2^{-j} N}] .
}
If $K\in \N$ is large enough, by the H\"older inequality, for $\epsilon>0$ small enough such that $(1+\epsilon) \hat{\beta}^2 < 1$,
\al{
\sum_{j=0}^{\lfloor \log_2 N\rfloor } {\rm E}^{\otimes p}_0 [I^{\otimes p}_{2^{-j}N}  \mathbf{1}_{\tau  \leq 2^{-j} N}]& \leq \sum_{j=0}^{\lfloor \log_2 N\rfloor } {\rm E}^{\otimes p}_0 [(I^{\otimes p}_{2^{-j}N})^{1+\epsilon}]^{1/(1+\epsilon)}  {\rm P}^{\otimes p}_0(\tau  \leq 2^{-j} N)^{\epsilon/(1+\epsilon)}\\
&\leq p {\rm E}_0^{\otimes p}[(I^{\otimes p}_{N})^{1+\epsilon}]  \sum_{j=0}^\infty e^{ -c \epsilon K^2 2^{j}} <1/2,
}
with some $c>0$ where we have used  \cite[Theorem~2.3.11]{LL10} to estimate the exit time and taken $K$ large enough in the last line. Therefore, combined with \eqref{eq: Hp estimate}, this yields $H_p\leq 2C.$
\end{proof}
\begin{proof}[Proof of Theorem~\ref{thm: ptop moment}]
    Assume $p\in \mathbb N$ and $\mathbf{X},\mathbf{Y}\in (\mathbb{T}_N)^p$. We will compute
$$A_{\mathbf{X},\mathbf{Y};N}:= {\rm E}_{\mathbf{X}}^{\rm per, \otimes p}
 \Big(\exp\Big(\beta_N^2 \sum_{n=1}^N \sum_{(i,j)\in \mathcal{C}_p}{\bf 1}_{S_n^i=S_n^j}\Big) {\bf 1}_{\{(S_N^i)_{i\in \Iintv{1,p}}=\mathbf{Y}\}}\Big).$$
It is obvious (from the trivial projection from $\mathbb{Z}^2$ to the torus $\mathbb{T}_N$) that the restriction of the space to $\mathbb{T}_N$ makes the partition function bigger, i.e., 
\aln{\label{eq: ptop and ptoline comparision}
{\rm E}^{\otimes p}_{\mathbf{X}}
 \Big(\exp\Big(\beta_N^2 \sum_{n=1}^N \sum_{(i,j)\in \mathcal{C}_p}{\bf 1}_{S_n^i=S_n^j}\Big) {\bf 1}_{\{(S_N^i)_{i\in \Iintv{1,p}}=\mathbf{Y}\}}\Big) \leq A_{\mathbf{X},\mathbf{Y};N}.
}
We can write the above as 
$$ A_{\mathbf{X},\mathbf{Y};N}=\sum_{ (x_n^i)_{n\in \Iintv{0,N},i\in \Iintv{1,p}}\in \mathcal{P}_N(\mathbf{X},\mathbf{Y}) }\prod_{n=0}^{N-1} p(\mathbf{x}_n,\mathbf{x}_{n+1})  e^{\beta_N^2 V(\mathbf{x}_{n+1})},$$
where we denote
$\mathbf{x}_n := (x^i_n)_{i\in \Iintv{1,p}}$, $p(\mathbf{x}_n,\mathbf{x}_{n+1}):=\prod_{i\in \Iintv{1,p}}p(x^i_n, x^i_{n+1}),$
 $V(\mathbf{x}_n):= \sum_{(i,j)\in \mathcal{C}_p}{\bf 1}_{x_n^i=x_n^j}$, and 
$$\mathcal{P}_N(\mathbf{X},\mathbf{Y})\!=\!\{ (x_n^i)_{n\in \Iintv{0,N},\,i\in \Iintv{1,p}} \in \!(\mathbb{Z}^2)^{p(N+1)}\!:\! \|x^i_{n+1}-x^i_n\|_1=1, \!\bf{x}_0=\bf{X},\,\bf{x}_N=\bf{Y}\}.$$
Introduce the matrix
$ Q(\mathbf{x},\mathbf{y})=p(\mathbf{x},\mathbf{y}) e^{\beta_N^2 V(\mathbf{y})}.$ Then
$$ A_{\mathbf{X},\mathbf{Y};N}= Q^N(\mathbf{X},\mathbf{Y}).$$
Now, $Q$ is a positive matrix, irreducible, 
 and thus possess a top real eigenvalue $\lambda_N$.
Let $\phi$ denote the corresponding (positive) eigenvector. Then,
\begin{equation}
  \label{eq-3}\sum_y Q(\mathbf{x},\mathbf{y})\phi(\mathbf{y})= \lambda_N \phi(\mathbf{x}).
\end{equation}

Introduce the matrix
$$ q(\mathbf{x},\mathbf{y})= \lambda_N^{-1}Q(\mathbf{x},\mathbf{y})\frac{\phi(\mathbf{y})}{\phi(\mathbf{x})},$$
and set
$$ m(\mathbf{y})= \phi(\mathbf{y})^2 e^{\beta_N^2 V(\mathbf{y})}.$$
We have from \eqref{eq-3} that
$$ \sum_y q(\mathbf{x},\mathbf{y})=\frac{1}{\lambda_N \phi(\mathbf{x})}\sum_y Q(\mathbf{x},\mathbf{y}) \phi(\mathbf{y})=1,$$
and
$$  m(\mathbf{x}) q(\mathbf{x},\mathbf{y})=\lambda_N^{-1} \phi(\mathbf{x})\phi(\mathbf{y}) p(\mathbf{x},\mathbf{y})  e^{\beta_N^2 V(\mathbf{x})+\beta_N^2 V(\mathbf{y})}=
m(\mathbf{y}) q(\mathbf{y},\mathbf{x}),$$
since $p(\mathbf{x},\mathbf{y})=p(\mathbf{y},\mathbf{x})$. Thus, $q$ is a stochastic matrix, reversible with respect to the measure $m$.

 By definition, for any $k\in \N$, we have
\al{
\lambda_N^k \phi(X) &=  \sum_{Y\in (\mathbb{T}_N)^p} Q^k(X,Y) \phi(Y)= {\rm E}^{{\rm per},\otimes p}_X[I^{\otimes p}_k \phi(S_k)].
}
In particular, since $I^{\otimes p}_k\geq 1$, it follows that 
\al{
&\lambda_N  \min_x \phi(x)\geq \min_x \phi(x)\min_{X} {\rm E}^{{\rm per},\otimes p}_{X}[I^{\otimes p}_1]  \geq \min_x \phi(x),\\
& (\lambda_N)^N \leq  {\rm E}^{{\rm per},\otimes p}_{\bar{X}}[I^{\otimes p}_N \phi(S_N)]/\phi(\bar{X}) \leq {\rm E}^{{\rm per},\otimes p}_{\bar{X}}[I^{\otimes p}_N] \leq {\rm E}^{{\rm per},\otimes p}_0 [I^{\otimes p}_N],
}
which, together with  Lemma~\ref{lem: p moment periodic},  implies $\lambda_N\geq 1$ and $\sup_N (\lambda_N)^N <\infty$.

We claim that $\phi(\mathbf{y})/\phi(\mathbf{x})$ is uniformly bounded for any $\mathbf{x}, \mathbf{y}\in (\mathbb T_N)^p$. 
We set $|\phi|_\infty = \max_x |\phi(x)|$. Let $\bar{X}\in  (\mathbb{T}_N)^p$ be such that $\phi(\bar{X})=|\phi|_\infty$.  
Let $C,\epsilon>0$ be constants  independent of $N$  such that $\lambda_N^N \in [1,C]$ 
and ${\rm E}^{{\rm per},\otimes p}_{0}[(I^{\otimes p}_N)^{1+\epsilon}]\leq C$ for any $N\in\N$. 
Note that ${\rm E}^{{\rm per},\otimes p}_{\bar{X}}[ (I^{\otimes p}_N)^{1+\epsilon}]\leq {\rm E}^{{\rm per},\otimes p}_{0}[(I^{\otimes p}_N)^{1+\epsilon}]\leq C$ 
by \eqref{eq: holder points application}.  
Letting $A_N := \{x\in (\mathbb{T}_N)^p|~\phi(x) \geq |\phi|_\infty/2C\}$, since $\lambda_N\geq 1$, we have
\al{
 |\phi|_\infty &\leq \lambda_N^N \phi(\bar{X})\\
 &= {\rm E}^{{\rm per},\otimes p}_{\bar{X}}[I^{\otimes p}_N \phi(S_N)\mathbf{1}_{S_N\notin A_N}]+{\rm E}^{{\rm per},\otimes p}_{\bar{X}}[I^{\otimes p}_N \phi(S_N)\mathbf{1}_{S_N\in A_N}]\\
 &\leq {\rm E}^{{\rm per},\otimes p}_{\bar{X}}[I^{\otimes p}_N ]|\phi|_\infty/2C + |\phi|_\infty {\rm E}^{{\rm per},\otimes p}_{\bar{X}}[ (I^{\otimes p}_N)^{1+\epsilon}] {\rm P}^{{\rm per},\otimes p}_{\bar{X}}(S_N \in A_N)^{\epsilon}\\
 &< |\phi|_{\infty}/2 +  C |\phi|_\infty {\rm P}^{{\rm per},\otimes p}_{\bar{X}}(S_N \in A_N)^{\epsilon}.
}
Thus, we have ${\rm P}^{{\rm per},\otimes p}_{\bar{X}}(S_N \in A_N) \geq (2C)^{-1/\epsilon}>0$ where the bound is independent of $N.$ Since $\bar{X}\in \mathbb{T}_N$, $p_N(0,y)$ and $p_N(x,y)$ are comparable uniformly in $x,y\in \mathbb{T}_N$, and thus,
\[
\inf_{{\mathbf{X}}\in (\mathbb{T}_N)^p } {\rm P}^{{\rm per},\otimes p}_{{\mathbf{X}}}(S_N \in A_N)\geq c,
\]
where $c$ is a positive constant independent of $N.$ Therefore, for any $\mathbf{X}\in \mathbb{T}_N^p$, we have
\al{
C \phi({\mathbf{X}})&\geq \lambda_N^N \phi(X) = {\rm E}^{{\rm per},\otimes p}_{\mathbf{X}}[I^{\otimes p}_{N}\phi(S_{N})]\\
&\geq {\rm E}^{{\rm per},\otimes p}_{\mathbf{X}}[\phi(S_{N})\mathbf{1}_{\{S_N \in A_N\}}]\\
&\geq  {\rm P}^{{\rm per},\otimes p}_{\mathbf{X}}(S_N \in A_N) |\phi|_\infty/2C \geq c |\phi|_\infty/2C.
}
Hence. $\phi({\mathbf{X}})/\phi({\mathbf{Y}})$ is bounded and bounded away from $0$ uniformly in $\mathbf{X},\mathbf{Y}\in \mathbb{Z}^2$ and $N\in \N$.

 Now, note that 
$$ A_{\mathbf{X},\mathbf{Y};N} =\lambda_N^N \frac{\phi(\mathbf{Y})}{\phi(\mathbf{X})} q^N(\mathbf{X},\mathbf{Y}).$$
We thus have that
$$  A_{\mathbf{X},\mathbf{Y};N} \leq C q^N(\mathbf{X},\mathbf{Y}).$$
 From the
exact form in \cite[Theorem 14.3]{W00}  (in the exact way as Corollary 14.5 there is proved, except that we do not need $N$ large enough), this gives a local limit theorem for $q$, which provides 
$$ A_{\mathbf{X},\mathbf{Y};N}  \leq C' N^{-p},$$
with some $C'>0$. Combined with \eqref{eq: ptop and ptoline comparision}, this ends the proof of Theorem~\ref{thm: ptop moment}.
\end{proof}
\begin{cor}\label{cor: LLT moment}
    Suppose that $\hat{\beta}<1$. For any $p\in\N$, if $\delta>0$  satisfies $(1+\delta)\hat{\beta}^2 < 1$, then there exists $C>0$ such that for any $N\in \N$ and $x,y\in \mathbb{Z}^2$,
 \al{
 \mathbb{E}[(Z_N(x,y) p_N(x,y))^p]\leq C N^{-\frac{p}{1+\delta}}p_N(x,y)^{\frac{p\delta}{1+\delta}}.
 }
\end{cor}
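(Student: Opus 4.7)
The plan is to apply H\"older's inequality inside the $p$-fold random-walk expectation in order to separate the endpoint constraint from the interaction exponential, and then invoke Theorem~\ref{thm: ptop moment} with a rescaled temperature parameter that remains subcritical.

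First I would expand the disorder expectation of the $p$-th power and integrate out the Gaussian environment as in \eqref{eq:momentFormula}, writing
\begin{align*}
\mathbb{E}[(Z_N(x,y) p_N(x,y))^p] = \mathrm{E}_{\mathbf{X}}^{\otimes p}\!\left[\exp\Big(\beta_N^2\sum_{n=1}^N \sum_{(i,j)\in \mathcal C_p} \mathbf{1}_{S_n^i=S_n^j}\Big)\prod_{i=1}^p \mathbf{1}_{S_N^i=y}\right]
\end{align*}
with $\mathbf{X}=(x,\dots,x)$. Writing $V(S_n):=\sum_{(i,j)\in\mathcal C_p}\mathbf{1}_{S_n^i=S_n^j}$ and using the trivial decomposition $\prod_i \mathbf{1}_{S_N^i=y} = (\prod_i \mathbf{1}_{S_N^i=y})^{1/(1+\delta)}(\prod_i \mathbf{1}_{S_N^i=y})^{\delta/(1+\delta)}$, H\"older's inequality under $\mathrm{E}_{\mathbf{X}}^{\otimes p}$ with conjugate exponents $1+\delta$ and $(1+\delta)/\delta$ yields
\begin{align*}
\mathbb{E}[(Z_N(x,y) p_N(x,y))^p] \leq \mathrm{E}_{\mathbf{X}}^{\otimes p}\!\left[e^{(1+\delta)\beta_N^2\sum_{n=1}^N V(S_n)} \prod_{i=1}^p \mathbf{1}_{S_N^i=y}\right]^{\!1/(1+\delta)} p_N(x,y)^{p\delta/(1+\delta)}.
\end{align*}

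Now I would exploit the key algebraic observation: the coupling $(1+\delta)\beta_N^2$ equals $\tilde\beta_N^2$ where $\tilde{\hat\beta} := \sqrt{1+\delta}\,\hat\beta$, since $\beta_N^2 = \hat\beta^2/R_N$. The assumption $(1+\delta)\hat\beta^2<1$ is exactly the subcriticality $\tilde{\hat\beta}<1$, so Theorem~\ref{thm: ptop moment} applies to the rescaled model with $\tilde{\hat\beta}$ in place of $\hat\beta$, giving
\begin{align*}
\mathrm{E}_{\mathbf{X}}^{\otimes p}\!\left[e^{(1+\delta)\beta_N^2\sum_{n=1}^N V(S_n)}\prod_{i=1}^p\mathbf{1}_{S_N^i=y}\right] \leq C N^{-p}
\end{align*}
for some $C=C(p,\hat\beta,\delta)$. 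Raising to the $1/(1+\delta)$ power and substituting yields the desired estimate.

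The argument is essentially a single, clean application of H\"older, so there is no real obstacle: the only point that must be checked is that the rescaled coupling $(1+\delta)\beta_N^2$ can be absorbed into a subcritical partition function, which is precisely the purpose of the hypothesis $(1+\delta)\hat\beta^2<1$. This is why the condition in the corollary takes the shape $(1+\delta)\hat\beta^2<1$ rather than the weaker $(1+\delta)\hat\beta<1$ one might naively expect from a temperature shift.
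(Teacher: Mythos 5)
Your proof is correct and follows the same route as the paper: expand the $p$-th moment via \eqref{eq:momentFormula}, apply H\"older with conjugate exponents $1+\delta$ and $(1+\delta)/\delta$ to split off $p_N(x,y)^{p\delta/(1+\delta)}$, and then invoke Theorem~\ref{thm: ptop moment} for the rescaled interaction $e^{(1+\delta)\beta_N^2\sum V}$, which is subcritical precisely when $(1+\delta)\hat\beta^2<1$. (One small aside: in your last sentence you describe $(1+\delta)\hat\beta<1$ as the ``weaker'' condition — since $\hat\beta<1$ it is in fact the \emph{stronger} of the two — but this does not affect the argument.)
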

\begin{proof}
By H\"older's inequality with $\delta\in (0,1/2)$ such that $(1+\delta) \hat{\beta} <1$, we have
 \begin{align*}
& {\rm E}^{\otimes p}_x\left[ I_N^{\otimes p} \mathbf{1}_{\{S^i_N = y,\,\forall i\in \Iintv{1,p}\}} \right] 
\\
&\leq {\rm E}^{\otimes p}_x\left[ (I_N^{\otimes p})^{1+\delta} \mathbf{1}_{\{S^i_N = y,\,\forall i\in \Iintv{1,p}\}}\right]^{\frac{1}{1+\delta}} {\rm E}^{\otimes p}_x\left[\mathbf{1}_{\{S^i_N = y,\,\forall i\in \Iintv{1,p}\}} \right]^{\frac{\delta}{1+\delta}}  \\
&= {\rm E}^{\otimes p}_x\left[ (I_N^{\otimes p})^{1+\delta} \mathbf{1}_{\{S^i_N = y,\,\forall i\in \Iintv{1,p}\}}\right]^{\frac{1}{1+\delta}} {\rm P}_x(X_N = y)^{\frac{p \delta}{1+\delta}} 
\leq C N^{-\frac{p}{1+\delta}} p_N(x,y)^{\frac{p\delta}{1+\delta}},
\end{align*}
with some $C>0$ where we have used Theorem~\ref{thm: ptop moment} in the last line.
\end{proof}
\section{Lower bounds on intersection moments for different starting points and time horizons}
\label{appB}
The goal of this appendix is to prove that
\begin{equation} \label{eq:lowerBoundGammar}
    \Gamma_k \geq e^{(1+o_{N}(1))\binom{q_k}{2} \lambda^2_{\frac{k-1}{M},\frac{k}{M}}(\hat \beta^2)},
\end{equation}
where $\Gamma_k$ is as in \eqref{eq-100225a}, $q_k$ satisfies \eqref{eq:conditionqk}, and $k\in \llbracket 1, M\rrbracket$.
We note that \cite{cosco2023momentsLB} only provides a lower bound on ${\rm E}_0^{\otimes q}[I_N^{\otimes q}]$ (compared to our needs,
there are differences on the time horizon and on the starting point), so one needs to adapt the proof to our situation. We now explain how.

In \cite{cosco2023momentsLB}, we considered the time range $[N^{\gamma},N]$ and decomposed it into subintervals of the form $[L_r,L_{r+1}]$ with (roughly)
\begin{equation} \label{eq:defLk}
    L_r = N^{\gamma f_r},\quad f_r = e^{r\frac{\alpha}{\log N}}, \quad 0\leq r \leq (\log \gamma^{-1}) \log N / \alpha.
\end{equation}
where $\gamma\in(0,1)$ is sent to $0$ after taking the $N\to\infty$ limit. (We used the index $k$ and not $r$ in \cite{cosco2023momentsLB}, but to avoid confusion with the use of $k$ in the current article, we replaced it by $r$.)

In our case, considering $\Gamma_k$ for $2\leq k\leq M$ corresponds (by diffusivity of the walks) to looking at the time range $[N^{(1+\varepsilon)(k-1)/M},N^{k/M}]$, which is equivalent to modifying the range of $r$ in \eqref{eq:defLk} to
\begin{equation} \label{eq:rangek}
     \log\left((1+\varepsilon) \frac{k-1}{\gamma M}\right) \alpha^{-1} \log N =: K_0^k \leq r \leq K_1^k:= \log\left(\frac{k}{\gamma M}\right) \alpha^{-1} \log N,
\end{equation}
with $\gamma,\varepsilon \ll M^{-1}$. Then, the proof of 
\cite[Proposition 2.3]{cosco2023momentsLB}  gives for all $\mathbf{x}\in B_k^{\otimes q_k}$: 
\begin{equation} \label{eq:Prop2.3}
    \rme_{\mathbf x}^{\otimes q_k} \left[I_{t_k-t_{k-1}}^{\otimes q_k}\right] \geq D_N^k(\mathbf x) \prod_{r=K_0^k}^{K_1^k} \Upsilon_r,
\end{equation}
where $D_N^k(\mathbf x)=\rme_{\mathbf x}^{\otimes q_k}[\prod_{r=K_0^k}^{K_1^k} \mathbf{1}_{A_r}]$ and $A_r,\Upsilon_r$ are defined exactly as in \cite{cosco2023momentsLB}. One easily checks that the proof of Proposition 2.10 in \cite{cosco2023momentsLB} entails that  $D_N^k(\mathbf x) \geq 1-o(1)$ uniformly in $\mathbf{x}\in B_k^{\otimes q_k}$. Finally, from \eqref{eq:Prop2.3}, repeating the proof of  \cite[Section 2.2]{cosco2023momentsLB} (see in particular the top of page 14) yields:
\begin{align*}
    \binom{q_k}{2}^{-1} \log \prod_{r=K_0^k}^{K_1^k} \Upsilon_r & \geq (1+o(1)) \int_{\log\big((1+\varepsilon) \frac{k-1}{\gamma M}\big)}^{\log\big( \frac{k}{\gamma M}\big)} \frac{\hat \beta^2 \gamma e^x}{1-\hat \beta^2 \gamma e^x}\\
    &= (1+o(1))\lambda^2_{\frac{k-1}{M}(1+\varepsilon),\frac{k}{M}}(\hat \beta^2).
\end{align*}
Putting things together, we obtain \eqref{eq:lowerBoundGammar}
for  $k\geq 2$. (The case $k=1$ follows similarly by setting $K_0^1=0$ and $K_1^1$ as in \eqref{eq:rangek} and sending $\gamma$ to 0.)
\section{An upper bound on intersection counts}
\label{app-C}
Introduce the condition for $T=T(N)\leq N$ and $q=q(N)\in\mathbb N\setminus\{1\}$: 
\begin{equation} \label{eq:q,T-condition}
\limsup_{N\to\infty} \frac{1}{\log N} \frac{\hat\beta^2
}{1-\hat \beta^2 
\log T/\log N} \binom{q}{2} <1.
\end{equation}
\begin{remark} \label{rk:Condition}
When $T=N^{k/M}$, $k\leq M$ and $q=q_k(N)$, 
the condition becomes:
\begin{equation} \label{eq:conditionqk}
\limsup_{N\to\infty} \frac{1}{\log N}\binom{q_k}{2} < \frac{1-\hat \beta^2\frac{k}{M}}{\hat \beta^2} .
\end{equation}
\end{remark}
For $s\leq t \leq N$, recall (see \eqref{eq:defpsi}) that $
\psi_{s,t,q} =  \beta_N^2  \sum_{n=s}^t \sum_{(i,j)\in \mathcal C_q} \mathbf{1}_{S_n^i=S_n^j},
$ and recall $\lambda_{u,v}$ of \eqref{eq:def_lambda_T_Nbis}.
The following crucial estimate is contained in \cite{CN24}, see Theorem 2.7 therein.
\begin{thm}\label{thm:qmomentupperBound}
Let $T = T(N)\leq N$ and $q=q(N)\CC{\to\infty}$ satisfy \eqref{eq:q,T-condition}. Uniformly in  $u\leq v\leq (\log T/\log N)$, it holds 
\begin{equation}
\sup_{X \in (\mathbb Z^2)^q} \rme^{\otimes q}_X\left[e^{\psi_{N^u,N^v,q}}\right]  \leq e^{\lambda_{u,v}^2 \binom{q}{2}+o(q^2)}.    
\end{equation}
\end{thm}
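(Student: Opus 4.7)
The plan is to reduce the $q$--body exponential moment to the two--body one by a multi-scale time decomposition, and to propagate the bound uniformly in the starting configuration $X$ by applying the Markov property slab by slab. Concretely, choose $K=K(N)\to\infty$ slowly and points $u=w_0<w_1<\cdots<w_K=v$ so that the increments $\lambda^2_{w_k,w_{k+1}}$ are of equal size $\lambda^2_{u,v}/K$. Then by the Markov property of the $q$ independent walks,
\[
\mathrm E_X^{\otimes q}\!\left[e^{\psi_{N^u,N^v,q}}\right]
=\mathrm E_X^{\otimes q}\!\left[\prod_{k=0}^{K-1} e^{\psi_{N^{w_k},N^{w_{k+1}},q}}\right]
\le\prod_{k=0}^{K-1} \sup_{Y\in (\mathbb Z^2)^q}\mathrm E_Y^{\otimes q}\!\left[e^{\psi_{N^{w_k},N^{w_{k+1}},q}}\right],
\]
so it suffices to show a per-slab bound of the form $e^{\binom{q}{2}\lambda^2_{w_k,w_{k+1}}+o(q^2)/K}$, uniformly in the starting configuration.

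For the per-slab estimate I would use the Feynman--Kac identity \eqref{eq:momentFormula} to rewrite the expectation as $\mathbb E[\prod_{i=1}^q Z_{N^{w_k},N^{w_{k+1}}}(y_i)]$, and then compare to a product of two-point moments. The key input is the two-body bound
\[
\sup_{y_1,y_2\in\mathbb Z^2}\mathrm E_{y_1,y_2}^{\otimes 2}\!\left[e^{\beta_N^2\sum_{n=N^{w_k}}^{N^{w_{k+1}}}\mathbf 1_{S_n^1=S_n^2}}\right]\le e^{\lambda^2_{w_k,w_{k+1}}(1+o(1))},
\]
which follows from a CSZ-type renewal expansion of the intersection local time: conditioning on successive meeting times and using the 2D Green's function asymptotic $R_{N^v}-R_{N^u}\sim(v-u)\log N/\pi$ produces a geometric series whose sum telescopes to $(1-\hat\beta^2 u)/(1-\hat\beta^2 v)$. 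To pass from pairs to the full $\binom{q}{2}$ sum, I would perform a cluster-type expansion of $\exp(\beta_N^2\sum_{n}\sum_{(i,j)}\mathbf 1_{S_n^i=S_n^j})$ in powers of $(e^{\beta_N^2}-1)$, grouping terms by which pairs of walks are involved; the dominant contribution to order $q^2$ comes from disjoint pair-collisions (factorizing over pairs thanks to the subcriticality $\hat\beta^2<1-o(1)$ inherited from \eqref{eq:q,T-condition}), while triple or higher collisions in the slab are suppressed by a factor of $\beta_N^2\cdot R_{N^{w_{k+1}}-N^{w_k}}=O(1/K)$, giving the desired $o(q^2)/K$ correction.

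The hardest part is controlling the uniformity in the starting configuration $X$ and propagating the factorization over the $\binom{q}{2}$ pairs with an error $o(q^2)$ rather than $O(q^2)$. Walks starting at coinciding or nearby sites a priori generate extra intersections and thus higher moments. To overcome this, I would combine the point-to-point estimate of Theorem~\ref{thm: ptop moment} (which provides a uniform Gaussian control on products $Z_n(x,y)p_n(x,y)$) with a coarse-graining step separating $X$ into ``clusters'' of nearby walks: within each cluster one pays the full many-body cost, which is controlled by the subcriticality condition \eqref{eq:q,T-condition}, while between clusters the interaction reduces to the pair bound above. Iterating over slabs then gives the claimed bound with error $K\cdot o(q^2)/K=o(q^2)$, and sending $K\to\infty$ slowly ensures both the per-slab factorization and the total error vanish in the desired sense.
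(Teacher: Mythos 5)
Two issues, one contextual and one mathematical.

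First, the paper does not prove this statement: immediately above the theorem it says ``The following crucial estimate is contained in \cite{CN24}, see Theorem 2.7 therein.'' So there is no proof here to compare against; this is an imported result whose actual proof (in \cite{cosco2021momentsUB}, sharpened in \cite{CN24}) is quite different in spirit from your sketch.

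Second, and more importantly, your reduction has a genuine gap that makes the argument collapse at the very first step. When you apply the Markov property slab by slab and pass to the supremum over starting configurations $Y$ at the beginning of each slab, the per-slab quantity you must bound is (after a time shift) $\sup_{Y}\mathrm E_Y^{\otimes q}[e^{\psi_{1,\,T_k,\,q}}]$ with $T_k=N^{w_{k+1}}-N^{w_k}$, and this supremum is attained at $Y$ with all $q$ walks coincident. But $T_k\approx N^{w_{k+1}}$ (the slabs are geometric, not short), so the expected intersection local time per pair for coincident walks over a single slab is $\beta_N^2 R_{T_k}\approx\hat\beta^2 w_{k+1}$, not $\hat\beta^2(w_{k+1}-w_k)$. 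Consequently the per-slab sup is of order $e^{\lambda^2_{0,w_{k+1}}\binom q2}$, not the required $e^{\lambda^2_{w_k,w_{k+1}}\binom q2 + o(q^2)/K}$, and the product over $k$ overshoots by a factor that is exponentially large in $K\binom q 2$. The whole point of the multi-scale estimate in \cite{cosco2021momentsUB,CN24} is that one \emph{cannot} decouple slabs by taking a worst-case starting configuration at each scale: one must propagate the information that the walks have already spread to diffusive scale by the start of each slab, and couple that spatial constraint across scales. This is what makes the estimate hard. Your cluster-expansion remarks about triple collisions being $O(1/K)$ are also unsupported: with $q=q(N)\to\infty$ there are $\Theta(q^3)$ triples and the target error is $o(q^2)$, so a per-triple cost of $O(1/K)$ is nowhere near small enough unless one has an additional mechanism (which the references do provide, but your sketch does not).
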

\section{Lower tail concentration}
\label{app-D}
The following lower tail concentration is proved in \cite[Proposition 3.1]{CaSuZy18}.
\begin{thm}\label{eq: lower tail}
There exists $C=C(\hat{\beta})>0$ such that for any $N\in \N$ large enough, and for any  $m\leq n\leq N$ and $t\geq 1$,
    \aln{
            &\mathbb{P}(Z_{m,n}\leq t^{-1})\leq Ce^{-(\log t)^2/C}.
}
\end{thm}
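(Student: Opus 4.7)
The bound is a standard lower-tail concentration estimate for the log partition function in the subcritical regime, proved in \cite[Proposition 3.1]{CaSuZy18}; the sketch below summarises the underlying strategy. The key idea is Gaussian concentration of measure applied to
\[
    F(\omega):=\log Z_{m,n}(\omega)
\]
viewed as a function of the Gaussian vector $(\omega(i,x))_{m\le i\le n,\,x\in\mathbb Z^2}$. A direct computation of the Malliavin derivative gives
\[
    \frac{\partial F}{\partial\omega(i,x)}=\beta_N\,\mu_{m,n}(i,x),
\]
where $\mu_{m,n}(i,x)$ is the time-$i$ marginal of the polymer Gibbs measure built from $\omega|_{\Iintv{m,n}}$, so that
\[
    \|\nabla F\|_2^2=\beta_N^2\,\mathcal R(\omega),\qquad \mathcal R(\omega):=\sum_{i=m}^n\sum_{x\in\mathbb Z^2}\mu_{m,n}(i,x)^2.
\]
The functional $F$ is therefore pointwise, but not globally, Lipschitz: its local Lipschitz norm is $\beta_N\sqrt{\mathcal R(\omega)}$, and the subcritical regime is precisely characterised by the fact that $\mathcal R$ is well-controlled.

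I would proceed in two steps. First, I would establish sufficient tail bounds on $\mathcal R(\omega)$ in the regime $\hat\beta<1$: by hypercontractivity of the Wiener chaos expansion of $Z_{m,n}$, one has uniform bounds $\mathbb E[Z_{m,n}^p]\le C_p$ for every $p$ with $p\hat\beta^2<1$; combined with the replica identity $\mathbb E_\omega\mathcal R(\omega)=O(R_N)=O(\log N)$, this yields super-polynomial decay of $\P(\mathcal R(\omega)\ge K)$ for large $K$. Second, I would apply the Borell--Tsirelson--Ibragimov--Sudakov inequality to a globally Lipschitz regularisation of $F$ (for instance the smoothing $F_\varepsilon(\omega):=\log(\varepsilon+Z_{m,n}(\omega))$, whose gradient norm is still bounded by $\beta_N\sqrt{\mathcal R}$, restricted to the event $\{\mathcal R\le K\}$), optimising in $K$ so that the concentration tail and the bad-event tail are balanced to produce $C\exp(-s^2/C)$. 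Since $\mathrm{med}(\log Z_{m,n})$ is uniformly bounded (it converges to $-\lambda^2/2\in\mathbb R$), taking $s=\log t-O(1)$ then gives the claim after sending $\varepsilon\downarrow 0$.

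The main obstacle is the Lipschitz-truncation step: because $\log Z_{m,n}$ is unbounded below as a function of $\omega$, Gaussian concentration cannot be applied directly, and one must carefully construct a globally Lipschitz approximation that agrees with $F$ on the good event $\{\mathcal R\le K\}$ while ensuring that the complementary error preserves the $(\log t)^2$ scaling. This balance is precisely where the strengthened exponential-type tails of $\mathcal R$, specific to the subcritical regime and provided by hypercontractivity, play an essential role; the detailed execution is carried out in \cite{CaSuZy18}.
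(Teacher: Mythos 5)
The paper does not actually prove this theorem: the sentence immediately preceding it reads ``The following lower tail concentration is proved in [CaSuZy18, Proposition 3.1],'' and the result is used as an imported black box. Your proposal is consistent with this in that it invokes the same citation and offers a high-level sketch of the underlying concentration-of-measure argument, rather than a self-contained proof. Since there is no internal proof to compare against, I can only assess your sketch on its own merits. The overall strategy you describe (Gaussian concentration for $F=\log Z_{m,n}$, with $\|\nabla F\|_2^2 = \beta_N^2\mathcal R(\omega)$ the quenched replica overlap, truncation on $\{\mathcal R\le K\}$, and balancing of the two error terms) is the standard route for left-tail bounds on log-partition functions and is the right shape of argument.

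One step in your sketch is stated too quickly. You write that the first-moment bound $\E_\omega[\mathcal R(\omega)]=O(R_N)=O(\log N)$ ``yields super-polynomial decay of $\P(\mathcal R(\omega)\ge K)$.'' Markov's inequality from a first moment gives only $O(\log N / K)$, which is nowhere near super-polynomial; to obtain decay fast enough to balance against the Gaussian concentration tail $e^{-cs^2/(\beta_N^2 K)}$ one needs control of high (indeed exponential) moments of $\mathcal R$, and it is precisely this part that is technically nontrivial and specific to the subcritical regime. Relatedly, the constraint $p\hat\beta^2<1$ you cite for $\E[Z_{m,n}^p]\le C_p$ is not the correct threshold for the 2D rescaled model (where in fact all positive integer moments are bounded for $\hat\beta<1$, with constants growing in $p$). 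These inaccuracies do not undermine your conclusion because, as you rightly note, the detailed execution is carried out in \cite{CaSuZy18}; but in a setting where you were asked to supply the proof rather than the citation, this is where the real work would be.
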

\begin{cor}\label{eq: lower tail2}
There exists $C=C(\hat{\beta})>0$ such that for any $N\in \N$ large enough, and for any  $m\leq n\leq N$ and $t\geq 1$, almost surely,
    \aln{
    &\mathbb{P}\Big( \frac{Z_n}{Z_m}\leq t^{-1} \Big|~\kF_{m} \Big. \Big)  \leq C e^{-(\log t)^2/C}.
    }
\end{cor}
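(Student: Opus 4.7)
The plan is to reduce the conditional lower tail bound to the unconditional Theorem~\ref{eq: lower tail} via a Jensen step on the negative moments, followed by Markov with an optimized exponent. The basic algebraic identity that drives everything is
\[
\frac{Z_n}{Z_m} \;=\; \sum_{y\in\mathbb{Z}^2} \mu_m(0,y)\, Z_{m+1,n}(y),
\]
which is immediate from the Markov property of the random walk and the factorization of $e^{\sum_{i=1}^{n}(\beta_N\omega(i,S_i)-\beta_N^2/2)}$ across times $m$ and $m+1$. The point is that $\mu_m(0,\cdot)$ is $\mathcal{F}_m$-measurable while $(Z_{m+1,n}(y))_{y\in\mathbb{Z}^2}$ is independent of $\mathcal{F}_m$, and by space-time translation invariance each $Z_{m+1,n}(y)$ has the law of $Z_{n-m}(0)$.

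Next, for any $p>0$ the function $x\mapsto x^{-p}$ is convex on $(0,\infty)$, so Jensen's inequality applied to the probability measure $\mu_m(0,\cdot)$ gives
\[
\bigl(Z_n/Z_m\bigr)^{-p} \;\leq\; \sum_{y\in\mathbb{Z}^2} \mu_m(0,y)\,Z_{m+1,n}(y)^{-p}.
\]
Taking $\mathbb{E}[\cdot\mid \mathcal{F}_m]$ and using independence together with translation invariance yields, almost surely,
\[
\mathbb{E}\bigl[(Z_n/Z_m)^{-p}\bigm|\mathcal{F}_m\bigr] \;\leq\; \mathbb{E}\bigl[Z_{n-m}(0)^{-p}\bigr].
\]

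The latter unconditional moment is controlled by integrating the tail bound from Theorem~\ref{eq: lower tail}. Writing $\mathbb{E}[Z_{n-m}^{-p}] = \int_{-\infty}^{\infty} p\,e^{sp}\,\mathbb{P}(Z_{n-m}\leq e^{-s})\,ds$, splitting the integral at $s=0$, and using $\mathbb{P}(Z_{n-m}\leq e^{-s})\leq C e^{-s^2/C}$ for $s\geq 0$, a Gaussian completion of the square gives
\[
\mathbb{E}\bigl[Z_{n-m}(0)^{-p}\bigr] \;\leq\; 1 + C' p\, e^{Cp^2/4},
\]
with a new constant $C'=C'(\hat{\beta})>0$.

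Finally, applying Markov's inequality conditionally,
\[
\mathbb{P}\!\left(\tfrac{Z_n}{Z_m}\leq t^{-1}\,\Big|\,\mathcal{F}_m\right) \;\leq\; t^{-p}\,\mathbb{E}\bigl[(Z_n/Z_m)^{-p}\bigm|\mathcal{F}_m\bigr] \;\leq\; t^{-p}\,\bigl(1+C' p e^{Cp^2/4}\bigr),
\]
and choosing $p=(2/C)\log t$ when $\log t$ is large enough yields the exponent $-p\log t + Cp^2/4 = -(\log t)^2/C$, producing the desired Gaussian-in-$\log t$ decay. For $\log t$ bounded (in particular $t\leq 1$) the right-hand side of the claimed inequality exceeds $1$ upon enlarging $C$, so it holds trivially. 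The only subtle step is the Jensen inequality, which crucially depends on $\mu_m(0,\cdot)$ being a genuine probability measure that is measurable with respect to the conditioning $\sigma$-field, so no further obstacle arises.
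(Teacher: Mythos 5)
Your proof is correct but takes a genuinely different route from the paper's. Starting from the same decomposition of $Z_n/Z_m$ as a $\mu_m(0,\cdot)$-average of independent copies of $Z_{n-m}$, the paper argues directly with probabilities: if $\sum_y \mu_m(0,y)\,\theta_m Z_{n-m}(y)\le t^{-1}$, then by Markov applied to the (conditionally deterministic) probability measure $\mu_m(0,\cdot)$, at least half of its mass must sit on $\{y:\theta_m Z_{n-m}(y)\le 2t^{-1}\}$; a second conditional Markov step and independence then give $\mathbb{P}(Z_n/Z_m\le t^{-1}\mid\kF_m)\le 2\,\mathbb{P}(Z_{n-m}\le 2t^{-1})$, and Theorem~\ref{eq: lower tail} finishes. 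Your version instead pushes everything through negative moments: Jensen for $x\mapsto x^{-p}$ with respect to $\mu_m(0,\cdot)$ reduces the conditional $p$-th negative moment to the unconditional one, which you bound by integrating the tail estimate and completing the square (giving $\mathbb{E}[Z_{n-m}^{-p}]\le 1+C'pe^{Cp^2/4}$), and then Markov with the optimized exponent $p\asymp\log t$ recovers the Gaussian-in-$\log t$ decay. Both approaches are legitimate; the paper's quantile argument is shorter and transfers the constant in Theorem~\ref{eq: lower tail} essentially unchanged, while yours is the more mechanical route through moment generating functions and is perhaps more robust when one actually wants explicit control of negative moments of $Z_n/Z_m$. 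One small notational caveat: under the paper's convention $Z_{s,t}(x)=\mathrm{E}_x[\exp\sum_{i\in\Iintv{s,t}}(\cdots)]$ with $S_0=x$, the quantity $Z_{m+1,n}(y)$ is \emph{not} the restarted partition function $\theta_m Z_{n-m}(y)$ (it averages over a walk that has already wandered $m$ extra steps), so the decomposition and the distributional-equality claim should be written with $\theta_m Z_{n-m}(y)$ in place of $Z_{m+1,n}(y)$; this is purely a labeling issue and the argument is otherwise sound.
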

\begin{proof}
Since $ {Z_n}/{Z_m}= \sum_{y\in \mathbb{Z}^2}\mu_m(0,y) \theta_{m} Z_{n-m}(y)$, there exists $C=C(\hat{\beta})>0$ such that
\al{
\mathbb{P}\Big( \frac{Z_n}{Z_m}\leq t^{-1}\Big|~\kF_{m} \Big.  \Big)& \leq \mathbb{P}\Big(\sum_{y\in \mathbb{Z}^2}\mu_m(0,y)\mathbf{1}_{\{ \theta_{m} Z_{n-m}(y) \leq 2 t^{-1}\}} \geq 1/2 \Big|~\kF_{m} \Big.\Big)\\
&\leq 2 \sum_{y\in \mathbb{Z}^2}\mu_m(0,y) \mathbb{P}( \theta_{m} Z_{n-m}(y) \leq  2t^{-1}\Big|~\kF_{m} \Big.)\\
&=2\mathbb{P}( Z_{n-m} \leq  2t^{-1})\leq  C e^{-(\log t)^2/C},
}
with some $C=C(\hat{\beta})>0$ by the Markov inequality and  Theorem~\ref{eq: lower tail}.
\end{proof}
\bibliographystyle{plain}
\bibliography{biblio}
\end{document}